\newtheorem{theorem}{Theorem}
\newtheorem{lemma}[theorem]{Lemma}
\newtheorem{proposition}[theorem]{Proposition}
\newtheorem{corollary}[theorem]{Corollary}
\newtheorem{remark}[theorem]{Remark}
\newlength\lenIn
\newcommand\MyKwIn[1]{%
  \settowidth\lenIn{\KwIn{}}%
  \setlength\hangindent{\lenIn}%
  \hspace*{\lenIn}#1\\}
\newlength\lenOut
\newcommand\MyKwOut[1]{%
  \settowidth\lenOut{\KwOut{}}%
  \setlength\hangindent{\lenOut}%
  \hspace*{\lenOut}#1\\}
\definecolor{col1}{HTML}{C721DD}
\definecolor{col2}{HTML}{D14A00}
\definecolor{col3}{HTML}{008C00}
\definecolor{col4}{HTML}{007FB1}
\definecolor{col5}{HTML}{D1AC00}
\newcommand{\E}{\mathbb{E}}	
\newcommand{\N}{\mathbb{N}}	
\renewcommand{\P}{\mathbb{P}}
\newcommand{\R}{\mathbb{R}}	
\newcommand{\cE}{\mathcal E}
\newcommand{\cI}{\mathcal I}
\newcommand{\cC}{\mathcal C}
\newcommand{\cR}{\mathcal R}
\newcommand{\cF}{\mathcal F}
\newcommand{\cG}{\mathcal G}
\newcommand{\cN}{\mathcal N}
\newcommand{\vmu}{\bm{\mu}}
\newcommand{\vnu}{\bm{\nu}}
\newcommand{\vlambda}{\bm{\lambda}}
\newcommand{\vw}{\bm{w}}
\newcommand{\vDelta}{\bm{\Delta}}
\newcommand{\vN}{\bm{N}}
\newcommand{\vv}{\bm{v}}
\newcommand{\vu}{\bm{u}}
\newcommand{\segent}[2]{\llbracket#1, #2\rrbracket}
\newcommand{\Segent}[1]{[#1]}
\newcommand{\acco}[1]{\left \{ #1 \right \}}
\newcommand{\abs}[1]{\left \lvert #1 \right \rvert}
\newcommand{\norm}[1]{\left \lVert #1 \right \rVert}
\newcommand{\floor}[1]{\left \lfloor #1 \right \rfloor}
\newcommand{\ceil}[1]{\left \lceil #1 \right \rceil}	
\newcommand{\ind}{\mathds{1}}							% fonction indicatrice
\newcommand{\ov}[1]{\overline{#1}}
\DeclareMathOperator{\card}{card}	
\DeclareMathOperator{\argmin}{argmin}
\DeclareMathOperator{\argmax}{argmax}
\DeclareMathOperator{\Alt}{Alt}
\DeclareMathOperator{\kl}{kl}
\newcommand{\TaS}{\textsc{Track-and-Stop}\xspace}
\newcommand{\EBS}{\textsc{Exploration-Biased Sampling}\xspace}
\newcommand{\CRac}{\textsc{Chernoff-Racing}\xspace}
\newcommand{\US}{\textsc{Uniform Sampling}\xspace}
\begin{document}

%%% TITLE %%%

\title{A Non-asymptotic Approach to Best-Arm Identification for Gaussian Bandits}

%\author{	Antoine Barrier \orcid{0000-0001-5224-0581},
%		Aur\'elien Garivier \orcid{0000-0002-4906-9573},
%		Tom\'a\v{s} Koc\'ak \orcid{0000-0002-6666-7188}}

\author[1, 2]{Antoine Barrier}
\author[1]{Aur\'elien Garivier}
\author[3]{Tom\'a\v{s} Koc\'ak}

\affil[1]{ENS de Lyon, UMPA UMR 5669, 46 all\'ee d’Italie, 69364 Lyon Cedex 07, France}
%\affil[1]{UMPA, CNRS, Inria, \'ENS Lyon, Lyon (France)}
\affil[2]{Universit\'e Paris-Saclay, CNRS, Laboratoire de mathématiques d'Orsay, 91405, Orsay, France}
\affil[3]{Institute for Mathematics, University of Potsdam, Germany}

\maketitle

%%% ABSTRACT %%%

\begin{abstract}
We propose a new strategy for best-arm identification with fixed confidence of Gaussian variables with bounded means and unit variance. This strategy, called \EBS, is not only asymptotically optimal: it is to the best of our knowledge the first strategy with non-asymptotic bounds that asymptotically matches the sample complexity.
But the main advantage over other algorithms like \TaS is an improved behavior regarding exploration: \EBS is biased towards exploration in a subtle but natural way that makes it more stable and interpretable. These improvements are allowed by a new analysis of the sample complexity optimization problem, which yields a faster numerical resolution scheme and several quantitative regularity results that we believe of high independent interest.

\textbf{Keywords}: Best arm identification $\cdot$ Fixed confidence $\cdot$ Multi-armed bandits $\cdot$ Sequential learning
\end{abstract}

%%% 1) INTRODUCTION %%%

\section{Introduction}		\label{sec:intro} 

Many modern systems of automatic decisions (from recommender systems to clinical trials, through auto-ML and parameter tuning) require to find the best among a set of options, using noisy observations obtained by successive calls to a random mechanism (see e.g.~\citealp{LS20}). The simplest formal model for such situations is the \emph{standard Gaussian multi-armed bandit}, a collection of $K\geq 2$ independent Gaussian distributions called \emph{arms} of unknown means $\vmu = (\mu_a)_{1 \leq a \leq K}\in\R^K$ and variances all equal to $1$. They are sampled sequentially and independently: at every discrete 
time step $t\in\N^*$, an agent chooses an arm $A_t\in \Segent{K} = \{1, \dots, K\}$ based on past information, and observes an independent draw $Y_t$ from distribution $\cN(\mu_{A_t}, 1)$. 

Among the set $\cG$ of all standard Gaussian multi-armed bandits with means in the interval $[0,1]$, we focus in this work on the subset $\cG^*$ of bandits $\vmu \in \cG$ that have exactly one arm $a^*(\vmu)\in[K]$ with the highest mean, that is $\mu_* = \mu_{a^*(\vmu)} > \max_{a \in \Segent{K} \setminus \{ a^*(\vmu)\}} \mu_a$, and we address the problem of optimally sampling the arms in order to identify $a^*(\vmu)$ as quickly as possible. We consider the sequential statistics framework often called \emph{fixed confidence setting} (see \citealp{EMM06, KTAS12}): by defining $\cF_t = \sigma(Y_1, \dots, Y_t)$ the sigma-field generated by the observations up to time $t$, a strategy consists of a sampling rule $(A_t)_{t\geq 1}$ where each $A_t$ is $\cF_{t-1}$-measurable, a stopping rule $\tau$ with respect to $(\cF_t)_{t \geq 0}$, and a $\cF_\tau$-measurable decision rule $\hat{a}_\tau$. 
Given a risk parameter $\delta\in(0,1)$, a strategy is called \emph{$\delta$-correct} if, whatever the parameter $\vmu \in\cG^*$, it holds that $\P_{\vmu}(\tau < +\infty, \hat{a}_{\tau} \neq a^*(\vmu)) \leq \delta$. The goal is to find a $\delta$-correct strategy that minimizes the expected number of observations $\E_{\vmu}[\tau_\delta]$ needed to identify $a^*(\vmu)$. 

The sample complexity of $\delta$-correct strategies cannot be arbitrarily good: it has been proved by \cite{GK16} that they essentially obey the lower bound $\E_{\vmu}[\tau_\delta] \geq T(\vmu) \log (1/\delta)$ for any $\vmu \in \cG^*$, where the \emph{characteristic time} $T(\vmu)$ is the solution of the following optimization problem
\begin{equation}    \label{eq:def_T}
  T(\vmu)^{-1} = \sup_{\vv \in \Sigma_K} \inf_{\vlambda \in \Alt(\vmu)} \sum_{a \in \Segent{K}} v_a  \frac{(\mu_a-\lambda_a)^2}{2}\;, 
\end{equation}
where $\Sigma_K = \{\vv\in[0,1]^K : v_1+\dots+v_K = 1\}$ %is the simplex of dimension $K-1$ 
and $\Alt(\vmu) = \{ \vlambda \in \cG^* : a^*(\vlambda) \neq a^*(\vmu) \}$ is the set of bandit models with an optimal arm different from $a^*(\vmu)$. Moreover, this bound is tight: the authors introduced \TaS, a strategy for which they proved that
$\limsup_{\delta \rightarrow 0} \E_{\vmu}[\tau_\delta] / \log(1/\delta) = T(\vmu)$ (see also~\citealp{Rus16}).

The information-theoretic analysis of~\cite{GK16} also highlights the nature of the optimal sampling strategy: whatever the value of the risk $\delta$, one should sample the arms with frequencies proportional to $\vv = \vw(\vmu)$, the (unique and well-defined) maximizer in the right-hand side of Equation~\eqref{eq:def_T}.
Indeed, the \TaS algorithm works as follows: at every time step $t$, an estimate $\hat{\vmu}(t)$ of the mean parameter $\vmu$ is computed thanks to the available observations. The optimal frequencies relative to this estimate are computed, and used to determine which action is to be selected next: we pick the action that lays the most behind its estimated optimal frequency, unless one action was severely undersampled (in which case its exploration is forced). A formal description of the strategy is recalled in Appendix~\ref{appendixA} (see Algorithm~\ref{algo:Track-and-Stop}). Some improvements were proposed: for example, \cite{Men19} proved that it is not necessary to solve the optimization problem in every time step. Instead, they perform a single gradient step in every round which enables them to prove a similar result while reducing the computational complexity of their algorithm (see also \citealp{TPRL20}).

The \TaS algorithm is not only a theoretical contribution, it also proved to be numerically efficient, far exceeding its competitors in a wide variety of settings. It was improved in different directions~\citep{DK19, DKM19, SHKMV20}, and  also provides a simple template for extensions, for bandit problems with structure \citep{KG20}, as long as the optimization problem~\eqref{eq:def_T} can be solved. 
Yet, \TaS suffers from certain shortcomings. First, a close look into the proofs shows that the theoretical guarantees proved so far are really asymptotic in nature. Second, the forced exploration appears very arbitrary, with a rate of $\sqrt{t}$ that has no other justification than lying somewhere between constant and linear functions. Third, the sampling strategy appears to be pretty unstable, especially at the beginning: the target frequencies can vary significantly as the estimated means fluctuate before stabilizing around their expectations.  Fourth, \TaS does not present the intuitively desirable behavior to sample uniformly in the beginning, until sufficient information has been gathered for significant differences between the arms to emerge. This is in contrast with strategies like Racing~\citep{KK13}, which are sub-optimal but intuitively appealing. Altogether, these issues lead for example to unpredictable and irregular conduct at the beginning of multiple A/B testing cases with many arms very close to optimal. 

\paragraph*{Contributions}
The present paper addresses the issues of \TaS and proposes a new algorithm that solves all of them. 
We focus on Gaussian bandits with known and equal variances.
The exploration is conducted very differently, in a statistically natural way that softens the fluctuations of empirical means and avoids arbitrary parameters. It results in a stabilized sampling strategy, that is much easier to follow and understand. We propose for this strategy a non-asymptotic analysis with finite risk bounds.
These results have required developing a careful analysis of the quantitative regularity of the solution to the optimization problem~\eqref{eq:def_T}. As a by-product, we obtain an accelerated algorithm for its numerical resolution, allowing a significant speed-up for the \TaS or the Gradient Ascent algorithms in the Gaussian case.
Actually, the algorithms discussed here %are not restricted to Gaussian arms but 
apply equally to sub-Gaussian arms with a known upper bound on the variances (in these settings, the sample complexity bounds proved in this paper apply but are not necessarily optimal). 

\newpage
While the proven optimality of \TaS\ is purely asymptotic, % and guarantees a good behavior of the strategy only for very small values of confidence $\delta$. 
	a different approach is followed in~\citep{KKS13, JMNB14, CLQ17} for moderate values of $\delta$. The proposed strategies are sub-optimal by a multiplicative constant, but are proved to satisfy explicit non-asymptotic bounds. More recently, \cite{DKM19} obtained a general non-asymptotic bound, a remarkable but hardly comparable result in particular settings. In this contribution, we try to make a link between both approaches by introducing a strategy with a non-asymptotic bound that asymptotically matches the sample complexity.

The paper is organized as follows. We present in Section~\ref{sec:strategy} our new %\EBS 
strategy with its main properties and guarantees. We then turn in Section~\ref{sec:sample_complexity} to the analysis of the optimization problem~\eqref{eq:def_T} and to the resulting new algorithm for its numerical resolution.
%Section~\ref{appendixD} is dedicated to the proof of the non-asymptotic sample complexity bound of the \EBS strategy (other proofs are gathered in the appendix). 
Lastly, we illustrate the performance and behavior of our strategy by numerical experiments in Section~\ref{sec:experiments}, and propose concluding remarks in Section~\ref{sec:conclusion}. %Note that all proofs are gathered in supplementary materials. 

%%% 2) ALGORITHM %%%

\section{The \EBS strategy}   \label{sec:strategy}

In this section, we introduce our new strategy called \EBS. Instead of \TaS's greedy choice of actions based on a plug-in estimate of $\vmu$, it relies on a specific estimator that is biased toward uniform exploration. 

For $\vmu \in \cG$, let $\vDelta(\vmu) = (\mu_* - \mu_a)_{a \in [K]} \in [0, 1]^K$ be its \emph{gap vector} %defined by $\Delta_a(\vmu) = \mu_* - \mu_a$ for $a \in \Segent{K}$ 
and $a^*(\vmu) %=\{ a \in \Segent{K} : \mu_a = \mu_* \} 
= \{ a \in \Segent{K} : \Delta_a(\vmu) = 0 \}$ its set of optimal arms. When $\vmu \in \cG^*$, $a^*(\vmu)$ has one element that we also denote by $a^*(\vmu)$ and we recall that the \emph{optimal weight vector} $\vw(\vmu)$ is the unique maximizer of optimization problem~\eqref{eq:def_T}. 
Otherwise, when $\vmu \in \cG \setminus \cG^*$ has at least two optimal arms, we define $\vw(\vmu) = \frac{1}{\card(a^*(\vmu))} (\ind_{1 \in a^*(\vmu)}, \dots, \ind_{K \in a^*(\vmu)})^T$. 
Since these quantities play a special role in the sequel, we set $w_{\min}(\vmu) = \min_{a \in \Segent{K}} w_a(\vmu)$, $\Delta_{\min}(\vmu) = \min_{a \in \Segent{K} : \Delta_a(\vmu) > 0} \Delta_a(\vmu)$ (which is not defined when $a^*(\vmu)=[K]$) and $\Delta_{\max}(\vmu) = \max_{a \in [K]} \Delta_a(\vmu)$.

Given a sampling strategy, let $N_a(t) =  \sum_{s \in [t]} \ind_{A_s = a}$ be the random number of draws of arm $a \in \Segent{K}$ up to time $t\in\N^*$, and if $N_a(t)\geq 1$, let $\hat{\mu}_a(t) = N_a(t)^{-1} \sum_{s \in [t]} Y_s \ind_{A_s = a}$ be the maximum likelihood estimate of $\mu_a$ at time $t$. We use the vector notations $\vN(t) = (N_a(t))_{a \in \Segent{K}}$ and $\hat{\vmu}(t) = (\hat{\mu}_a(t))_{a \in \Segent{K}}$. 

In the rest of this section, we fix $\vmu \in \cG$.

\subsection{Conservative Tracking}

The main idea of the algorithm is to design a sampling policy of arms that naturally encourages exploration without forcing it like \TaS\ does. To do so, the objective is to ``wrap" the optimal weight vector $\vw(\vmu)$ ``from above", by ensuring that we never under-estimate its minimal value. Indeed, even an arm with low mean needs to be sampled sufficiently often until one is very confident that it is suboptimal. The idea is to construct a confidence region $\cC\cR_{\vmu} \subset [0, 1]^K$ for $\vmu$ on which one can efficiently find a bandit $\tilde{\vmu} \in \cC\cR_{\vmu}$ maximizing the minimal weight $w_{\min}$:
\begin{equation} \label{eq:optimistic_bandit_def}
	\tilde{\vmu} \in \argmax_{\vnu \in \cC \cR_{\vmu}} w_{\min}(\vnu) \;.
\end{equation}
As long as $\vmu$ belongs to the confidence region $\cC\cR_{\vmu}$, choosing the target weights $\vw(\tilde{\vmu})$ guarantees that every arm is explored sufficiently, as $w_{\min}(\tilde{\vmu}) \geq w_{\min}(\vmu)$.
The exploration bias decreases with the number of observations, as %the confidence region 
$\cC\cR_{\vmu}$ shrinks to $\{\vmu\}$, and in the end arms are sampled with frequencies close to the optimal weight vector $\vw(\vmu)$.

This approach to exploration requires two ingredients:
\begin{itemize}[itemsep=0cm, topsep=0cm]

	\item the exploration-biased bandit $\tilde{\vmu}$ needs to be efficiently computable. It turns out to be the case if the confidence region is a product of confidence intervals on each arm (a mild requirement since the arms are independent). We propose Algorithm~\ref{algo:OptimisticWeights}, an efficient procedure for computing $\tilde{\vmu}$. Intuitively, maximizing $w_{\min}$ over $\cC\cR(\vmu)$ requires to increase and equalize all the positive gaps as much as possible. The associated bandit will indeed be the one for which it is harder to identify the second best arm and thus it will require to sample the worst arms more frequently. This gives a candidate bandit for each potential best arm, and our algorithm compares those candidates. Figure~\ref{fig:example_possible_optimistic_bandits} illustrates on an example the principle of Algorithm~\ref{algo:OptimisticWeights}, whose correctness is proved in~Proposition~\ref{prop:optimistic_weights}. The algorithm requires {\sc Optimal Weights} (Algorithm~\ref{algo:Newton_r} of Appendix~\ref{appendixC3}), an efficient procedure for %computing the optimal weight vector $\vw(\vnu)$ of any bandit $\vnu \in \cG^*$ (see also Section~\ref{subsec:bounds_computation}).
solving optimization problem~\eqref{eq:def_T} (see also Section~\ref{subsec:bounds_computation}). 

	\item the regularity of the mapping $\vnu \mapsto \vw(\vnu)$ needs to be explicitly known. Indeed, the confidence region will decrease with the number of observations, and $\tilde{\vmu}$ will come close to $\vmu$. The continuity proved by~\cite{GK16} for the asymptotic optimality of \TaS\ is not sufficient: the first quantitative bounds are given below in Section~\ref{subsec:regularity}.
\end{itemize}

\begin{algorithm}
	\caption{\textsc{Exploration-Biased Weights}} \label{algo:OptimisticWeights}
	\SetAlgoLined
	\DontPrintSemicolon
	\SetKwInOut{Input}{input}\SetKwInOut{Output}{output}
	\KwIn{confidence region $\cC\cR = \prod_{a \in \Segent{K}} [\underline{\mu}_a, \ov{\mu}_a]$}
	\vspace{0.1cm}
	\KwOut{exploration-biased bandit $\tilde{\vmu} \in \cC\cR$}
	\MyKwOut{exploration-biased optimal weight}
	\MyKwOut{\quad vector $\vw = \vw(\tilde{\vmu})$}

	\BlankLine

	$\text{maxLB} \gets \max_{a \in \Segent{K}} \underline{\mu}_a$ ; $\text{minUB} \gets \min_{a \in \Segent{K}} \ov{\mu}_a$ \;
	\If{$\text{minUB} \geq \text{maxLB}$} {
		$\tilde{\vmu} \gets (\text{minUB}, \dots, \text{minUB})$ ; $\vw \gets (\frac{1}{K}, \dots, \frac{1}{K})$
	}
	\Else{
	$\text{PotentialBest} \gets \{ a \in \Segent{K} ~:~ \ov{\mu}_a > \text{maxLB} \}$ \;
	$\vw \gets (0, \dots, 0)$ \;
	\For{$a \in \text{PotentialBest}$}{
	$\tilde{\mu}^{\text{test}(a)}_a \gets \ov{\mu}_a$ \;
	\For{$b \in [K] \setminus \{ a\}$}{
	$\tilde{\mu}^{\text{test}(a)}_b \gets \max(\underline{\mu}_b, \text{minUB})$
	}
	$\vw^{\text{test}(a)} \gets \text{{\sc Optimal Weights}}(\tilde{\vmu}^{\text{test}(a)})$ \;
	\If{$\min_{b \in [K]} w^{\text{test}(a)}_b > \min_{b \in [K]} w_b$}{
	$\vw \gets \vw^{\text{test}(a)}$ ; $\tilde{\vmu} \gets \tilde{\vmu}^{\text{test}(a)}$
	}
	}
	}
\end{algorithm}

\begin{figure*}
	\centering
	\includegraphics[width=0.32\linewidth]{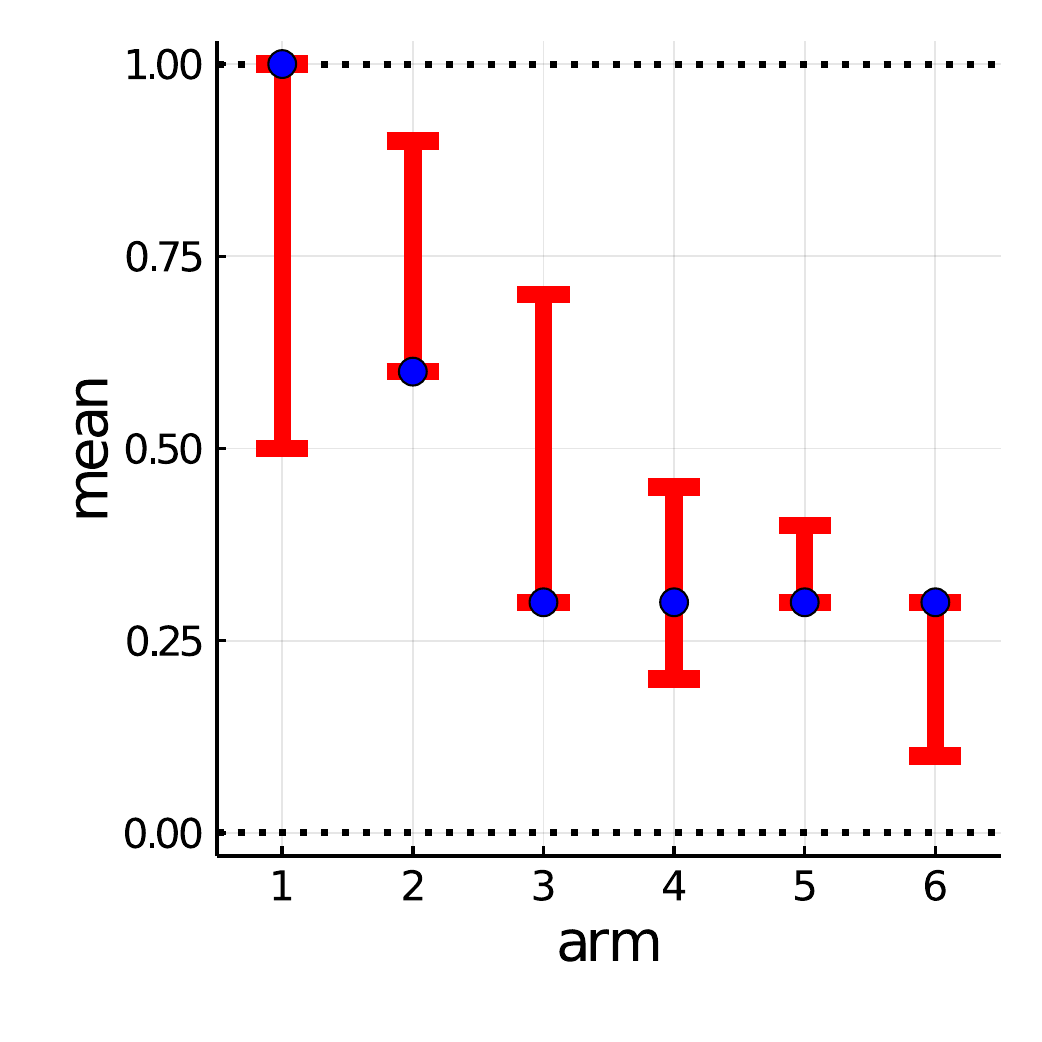}
	\includegraphics[width=0.32\linewidth]{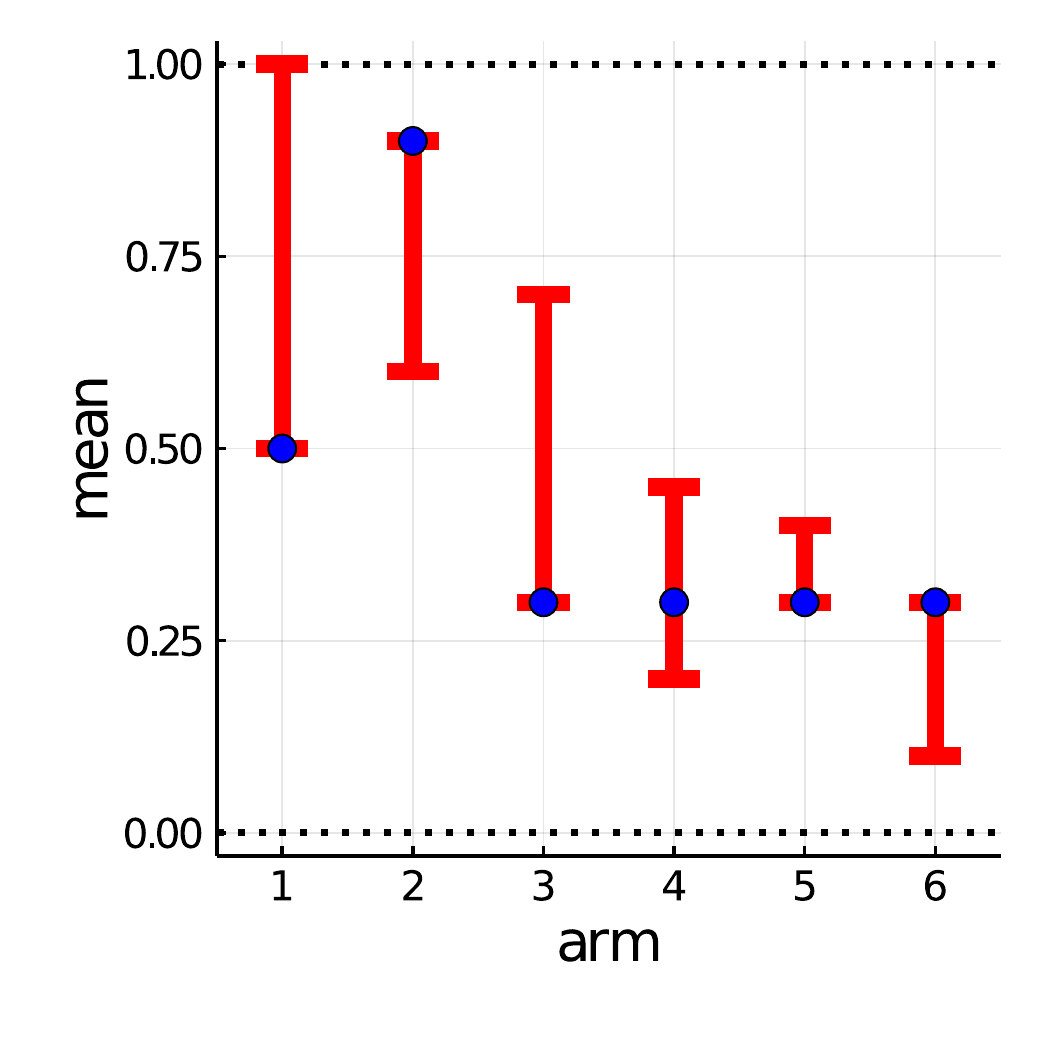}
	\includegraphics[width=0.32\linewidth]{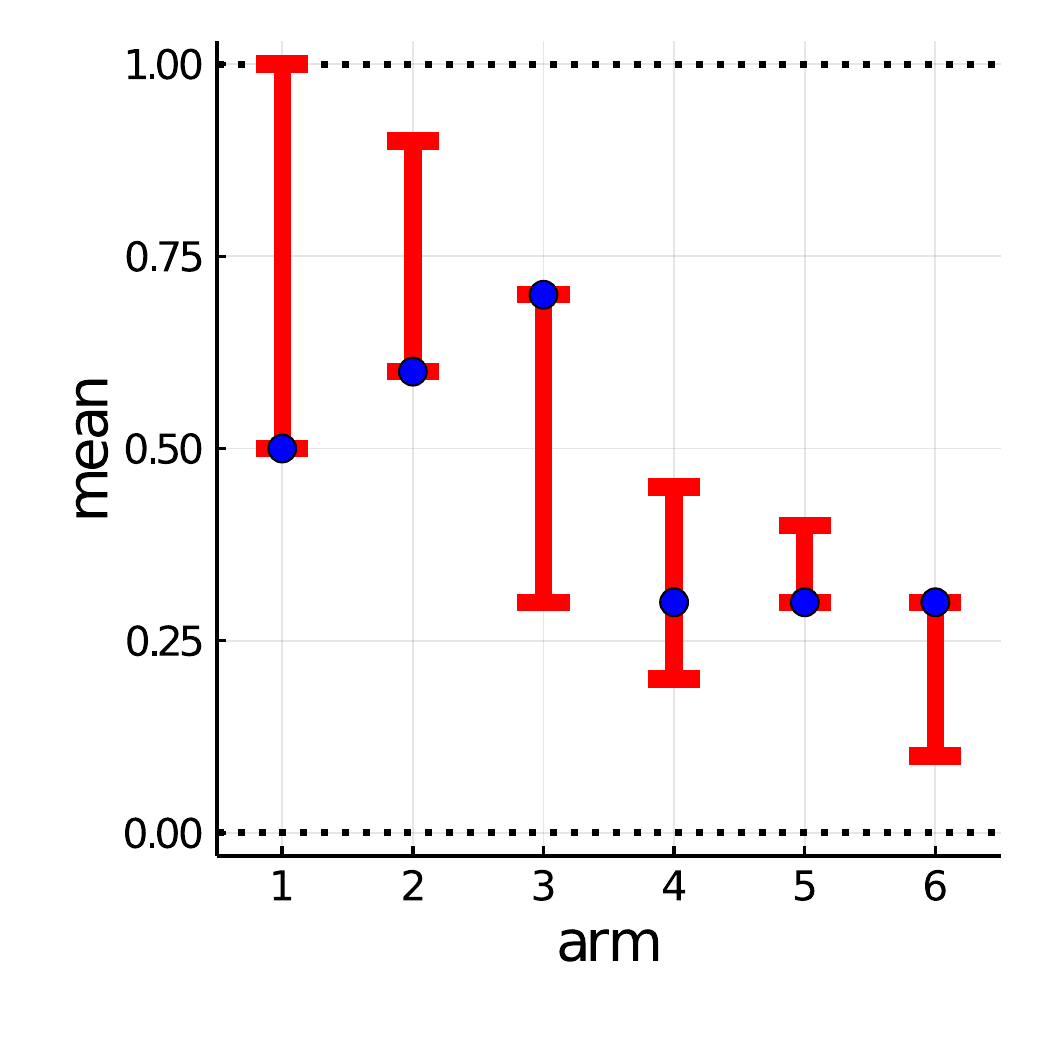}
	\caption{List of bandits $(\tilde{\vmu}^{\text{test}(a)})_{a \in \text{PotentialBest}}$ tried by Algorithm~\ref{algo:OptimisticWeights} for the example confidence region in red with $\text{PotentialBest} = \{1, 2, 3\}$. From left to right: $\tilde{\vmu}^{\text{test}(1)}$, $\tilde{\vmu}^{\text{test}(2)}$ and $\tilde{\vmu}^{\text{test}(3)}$ 	\label{fig:example_possible_optimistic_bandits}}
\end{figure*}

One can remark that as long as the confidence intervals have a non-empty intersection, which means the observations do not permit to exclude that any of them is optimal, the exploration-biased weights returned by Algorithm~\ref{algo:OptimisticWeights} are uniform and the arms are sampled in a round-robin way (as in a Racing or Successive Elimination algorithm like in~\citep{EMM06}).

\begin{proposition} \label{prop:optimistic_weights}
	Let $\cC\cR = \prod_{a \in \Segent{K}} [\underline{\mu}_a, \ov{\mu}_a] \subset [0, 1]^K$ and $(\tilde{\vmu}, \vw) \gets \text{\textsc{Exploration-Biased Weights}}(\cC\cR)$. Then $\vw = \vw(\tilde{\vmu})$ and $\tilde{\vmu}$ satisfies Equation~\eqref{eq:optimistic_bandit_def}. 
	%\[ w_{\min}(\tilde{\vmu}) = \max_{\vnu \in \cC \cR} w_{\min}(\vnu) \;.  \]
\end{proposition}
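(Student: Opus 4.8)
The plan is to treat the two branches of Algorithm~\ref{algo:OptimisticWeights} separately and, in each, to prove both assertions of Proposition~\ref{prop:optimistic_weights}: that the returned vector satisfies $\vw=\vw(\tilde\vmu)$, and that $\tilde\vmu$ maximizes $w_{\min}$ over $\cC\cR$. The identity $\vw=\vw(\tilde\vmu)$ will be immediate by construction. In the intersecting branch, $\tilde\vmu=(\text{minUB},\dots,\text{minUB})$ has all arms tied, so $\vw(\tilde\vmu)=(1/K,\dots,1/K)$ by the very definition of $\vw$ on $\cG\setminus\cG^*$, which is exactly what the algorithm returns; in the other branch $\vw$ is set to $\textsc{Optimal Weights}(\tilde\vmu)$, which equals $\vw(\tilde\vmu)$ by correctness of that subroutine (Algorithm~\ref{algo:Newton_r}). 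So the whole content is the optimality of $\tilde\vmu$ for problem~\eqref{eq:optimistic_bandit_def}.

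First I would dispose of the branch $\text{minUB}\ge\text{maxLB}$. Here $\text{minUB}$ lies in every interval $[\underline\mu_a,\ov\mu_a]$ (it is $\le\ov\mu_a$ by definition and $\ge\text{maxLB}\ge\underline\mu_a$), so $\tilde\vmu\in\cC\cR$; since all its coordinates coincide, $w_{\min}(\tilde\vmu)=1/K$, which is the global maximum of $w_{\min}$ over all of $\cG$ because the weights sum to one. Hence $\tilde\vmu$ trivially solves~\eqref{eq:optimistic_bandit_def}.

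The substance is the branch $\text{minUB}<\text{maxLB}$. Here I would first record the optimality conditions for the maximizer $\vw(\vnu)$ of~\eqref{eq:def_T} at a point $\vnu\in\cG^*$ with best arm $a$: the pairwise terms equalize, $\tfrac{w_a w_b}{w_a+w_b}\Delta_b(\vnu)^2$ being constant over $b\ne a$, together with the balance identity $w_a^2=\sum_{b\ne a}w_b^2$. Two consequences drive the argument. The balance identity gives $w_a\ge w_b$ for all $b$, so $w_{\min}(\vnu)$ is always the weight of a \emph{suboptimal} arm, namely the one with the largest gap; and since for $\vnu\in\cG^*$ all weights are positive while any $\vnu\in\cG\setminus\cG^*$ has $w_{\min}=0$ (there is always a strictly suboptimal arm once the intervals do not all intersect), a maximizer must lie in $\cG^*$. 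For such a maximizer with best arm $a$ one has $\ov\mu_a\ge\nu_a>\max_b\underline\mu_b=\text{maxLB}$, so $a\in\text{PotentialBest}$; it therefore suffices to solve, for each fixed $a\in\text{PotentialBest}$, the subproblem $\max\{w_{\min}(\vnu):\vnu\in\cC\cR,\ a^*(\vnu)=a\}$ and to keep the best over $a$ — which is exactly what the loop of Algorithm~\ref{algo:OptimisticWeights} does.

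It remains to identify the maximizer of each subproblem with $\tilde\vmu^{\text{test}(a)}$, and this is where I expect the real work to lie. I would establish two monotonicity facts about $w_{\min}$ as a function of the gap vector, in the spirit of the quantitative regularity of Section~\ref{subsec:regularity}. First, raising $\nu_a$, which shifts every gap up by a common additive constant, never decreases $w_{\min}$ (it brings the gap \emph{ratios} toward one); this forces $\nu_a=\ov\mu_a$ at the optimum. Second, at fixed $\nu_a$, $w_{\min}$ is maximized by making the gaps as equal as possible: lowering an arm whose gap is below the largest gap raises $w_{\min}$, while pushing an arm's gap beyond the others lowers it. Since the arm $c^\star$ achieving $\text{minUB}$ cannot exceed $\ov\mu_{c^\star}=\text{minUB}$, its gap is pinned at the maximal value $\ov\mu_a-\text{minUB}$, so every other arm should be driven down to this common level $\text{minUB}$, or to $\underline\mu_b$ if its lower endpoint lies above $\text{minUB}$. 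This is precisely $\nu_b=\max(\underline\mu_b,\text{minUB})$, i.e.\ $\tilde\vmu^{\text{test}(a)}$, and one checks directly that $\tilde\vmu^{\text{test}(a)}\in\cC\cR\cap\cG^*$ with $c^\star$ as its largest-gap arm. The monotonicity/equalization step — essentially a Schur-concavity property of $w_{\min}$ in the gap vector, derived from the equalization and balance conditions above — is the main obstacle; the remainder is bookkeeping, once one also notes that nondegenerate confidence intervals guarantee $\text{PotentialBest}\ne\varnothing$, so that the loop indeed updates $\vw$.
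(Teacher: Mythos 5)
Your plan follows essentially the same route as the paper's proof in Appendix~\ref{appendixC4}: the branch $\text{minUB}\geq\text{maxLB}$ is settled by the trivial bound $w_{\min}\leq 1/K$; in the other branch you observe that any $\vnu\in\cC\cR$ with several optimal arms has $w_{\min}(\vnu)=0$, that the best arm of any relevant candidate lies in $\text{PotentialBest}$, and that the per-arm subproblem is solved by the equalized bandit $\tilde{\vmu}^{\text{test}(a)}$ via monotonicity of $w_{\min}$ under gap moves. Your ``fact (i)'' is exactly Lemma~\ref{lem:moving_optimal_arm}, and your ``fact (ii)'' is a fusion of Lemma~\ref{lem:moving_one_suboptimal_arm} (together with Corollary~\ref{cor:weights_order}, which identifies $w_{\min}$ with the weight of the largest-gap arm) and Lemma~\ref{lem:moving_worst_arms}. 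These are stated in Section~\ref{subsec:monotonicity_minmaxpb} precisely as the key ingredients of Proposition~\ref{prop:optimistic_weights} --- not in Section~\ref{subsec:regularity}, which you cite instead --- so within the paper your argument closes simply by invoking them.

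The one place where your sketch is genuinely short of a proof, and you flag it yourself, is the equalization step. The heuristic ``make the gaps as equal as possible'' does not justify, arm by arm, the move that raises the worst arms up to $\text{minUB}$: raising a single worst arm decreases all the \emph{other} weights (Lemma~\ref{lem:moving_one_suboptimal_arm} read in reverse), so its effect on $w_{\min}$ is not sign-definite, and no off-the-shelf Schur-concavity of $w_{\min}$ in the gap vector is available. The paper circumvents this with a specific order of moves: first lower each suboptimal arm only to $\max(\underline{\mu}_b,\nu_{\min})$, so that the identity of the worst arm is preserved and Lemma~\ref{lem:moving_one_suboptimal_arm} applies to the correct coordinate, and only then raise \emph{all} worst arms simultaneously to $\text{minUB}$, which is Lemma~\ref{lem:moving_worst_arms} and is proved by the scaling invariance $\vw(\kappa\vDelta)=\vw(\vDelta)$ of Lemma~\ref{lem:scaling} followed by another application of Lemma~\ref{lem:moving_one_suboptimal_arm}; your direct jump to $\nu_b=\max(\underline{\mu}_b,\text{minUB})$ conflates these two steps. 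A second, minor point: your variational phrasing presumes that a maximizer of $w_{\min}$ over $\cC\cR$ exists and then characterizes it, which is delicate because $\vw$ is defined only by convention at ties; the paper instead bounds $w_{\min}(\vnu)$ for \emph{every} $\vnu\in\cC\cR$ through the explicit three-step transformation, which is the clean way to formalize your improvement argument and is the formulation you should adopt.
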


%\nota{R5: text explaining the Proposition $\to$ Antoine: already said before?
%}

%\change{With the notations of the proposition, Proposition~\ref{prop:optimistic_weights} ensures that $w_{\min}(\tilde{\vmu}) \geq w_{\min}(\vmu)$ as long as $\vmu \in \cC\cR$.}
The proof of Proposition~\ref{prop:optimistic_weights} is given in Appendix~\ref{appendixC4} and relies on the results of Section~\ref{subsec:monotonicity_minmaxpb}.

\subsection{The Strategy}

We are now able to introduce our strategy called \EBS. Given a risk $\delta \in (0, 1)$ and a threshold function $\beta(t, \delta)$, we compute at each time confidence intervals for each $\mu_a$ that will ensure $\vmu$ to belong to each associated confidence region with probability at least $1-\gamma$, where $\gamma \in (0, 1)$ is a fixed parameter. We can then ensure enough exploration by biasing the optimal weights $\vw(\vmu)$ using Algorithm~\ref{algo:OptimisticWeights}. %As explained in~\cite{GK16}, one can either follow the exploration-biased weights directly (D-tracking) or their cumulative sums (C-tracking). For the simplicity of the proofs, we use C-tracking in the analysis, but we ran the experiments with both options (D-tracking appears to perform slightly better). \nota{move sentences about C/D-tracking to the end of 2.2 ? }

%\paragraph*{Confidence regions} Confidence regions are designed to satisfy two requirements: Algorithm~\ref{algo:OptimisticWeights} requires a product of confidence intervals, and we need that $\vmu$ belongs to them at all times. For  $\gamma \in (0, 1)$, we define for $t \in \segent{K}{\tau_\delta}$
%\begin{equation} \label{eq:def_CR}
%	\cC\cR_{\vmu}(t) = \prod_{a \in \Segent{K}} \big[\hat{\mu}_a(t) \pm \ell_a(t)\big] \;,
%\end{equation}
%where  $\ell_a(t) = C_{\gamma/K}(N_a(t))$ is the half-width of the confidence interval for $\mu_a$ and $C_\gamma(s) = 2\sqrt{\frac{\log(4s/\gamma)}{s}}$.The following Lemma, proved in Appendix~\ref{appendixB}, states a time-uniform confidence guarantee for $\vmu$.

\paragraph*{Confidence regions} Confidence regions are designed to satisfy two requirements. First we need products of confidence intervals in order to use Algorithm~\ref{algo:OptimisticWeights}, and then we will require a time-uniform confidence guarantee as a key ingredient for the non-asymptotic analysis of \EBS. For $\gamma \in (0, 1)$, we define for $t \in \segent{K}{\tau_\delta}$
\begin{equation} \label{eq:def_CR}
	\textstyle \cC\cR_{\vmu}(t) = \prod_{a \in \Segent{K}} \bigl[\hat{\mu}_a(t) \pm C_{\gamma/K}(N_a(t)) \bigr] \;,
\end{equation}
where $C_\gamma(s) = 2\sqrt{\frac{\log(4s/\gamma)}{s}}$. The following Lemma, proved in Appendix~\ref{appendixB}, states a time-uniform $\gamma$-confidence guarantee for $\vmu$.

\begin{lemma} \label{lem:CR_mu} For any $\vmu \in \cG$ and $\gamma \in ]0, 1[$, we have
	\[ \P_{\vmu}\bigl(\exists t \in \segent{K}{\tau_\delta} : \vmu \notin \cC\cR_{\vmu}(t) \bigr) \leq \gamma \;. \]
\end{lemma}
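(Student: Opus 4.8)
The plan is to reduce this time-uniform, multi-arm statement to $K$ independent single-arm anytime confidence bounds, and to establish each of those by a plain union bound over the number of pulls.

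First I would decouple the arms. Since $\vmu \notin \cC\cR_{\vmu}(t)$ means $\abs{\hat{\mu}_a(t) - \mu_a} > C_{\gamma/K}(N_a(t))$ for at least one $a$, a union bound over $a$ gives
\[
\P_{\vmu}\bigl(\exists t \in \segent{K}{\tau_\delta} : \vmu \notin \cC\cR_{\vmu}(t)\bigr) \le \sum_{a \in \Segent{K}} \P_{\vmu}\bigl(\exists t \in \segent{K}{\tau_\delta} : \abs{\hat{\mu}_a(t) - \mu_a} > C_{\gamma/K}(N_a(t))\bigr),
\]
so it suffices to bound each summand by $\gamma/K$. Next, for a fixed arm $a$, I would pass from ``time $t$'' to ``number of pulls $s$''. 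Writing $(Z_{a,i})_{i \ge 1}$ for the i.i.d.\ $\cN(\mu_a,1)$ rewards returned by successive draws of arm $a$ and $\bar{Z}_{a,s} = \tfrac1s\sum_{i=1}^s Z_{a,i}$, one has $\hat{\mu}_a(t) = \bar{Z}_{a,N_a(t)}$, and since the values taken by $N_a(\cdot)$ on $\segent{K}{\tau_\delta}$ form a subset of $\{s \ge 1\}$,
\[
\bigl\{\exists t : \abs{\hat{\mu}_a(t)-\mu_a} > C_{\gamma/K}(N_a(t))\bigr\} \subseteq \bigl\{\exists s \ge 1 : \abs{\bar{Z}_{a,s}-\mu_a} > C_{\gamma/K}(s)\bigr\}.
\]

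A union bound over the deterministic index $s$ together with the Gaussian tail $\P_{\vmu}(\abs{\bar{Z}_{a,s}-\mu_a}>u) \le 2e^{-su^2/2}$ then closes the estimate. With the explicit choice of the radius, $s\,C_{\gamma/K}(s)^2/2 = 2\log(4sK/\gamma)$, so that
\[
\P_{\vmu}\bigl(\exists s \ge 1 : \abs{\bar{Z}_{a,s}-\mu_a} > C_{\gamma/K}(s)\bigr) \le \sum_{s\ge1} 2\,e^{-s\, C_{\gamma/K}(s)^2/2} = \sum_{s\ge1} \frac{\gamma^2}{8K^2 s^2} = \frac{\gamma^2}{8K^2}\cdot\frac{\pi^2}{6} \le \frac{\gamma}{K},
\]
the last inequality holding comfortably because $\gamma \le 1$ and $K \ge 2$. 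Summing over the $K$ arms yields the claimed bound $\gamma$. This makes transparent that the constant $4$ and the factor $2$ appearing in $C_\gamma$ are chosen precisely so that the tail decays like $s^{-2}$, keeping the series summable with room to spare.

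The only delicate point is the inclusion in the second step: the number of pulls $N_a(t)$ is itself random and selected adaptively by the policy. The reason the union bound survives is that, for each fixed $s$, the deviation event $\{\abs{\bar{Z}_{a,s}-\mu_a}>C_{\gamma/K}(s)\}$ is an event on the fixed i.i.d.\ stream $(Z_{a,i})_i$ that depends neither on the sampling rule nor on when the $s$-th pull of arm $a$ occurs; the adaptivity merely governs which of these deterministically indexed events are actually visited. Making the reward stream $(Z_{a,i})_i$ rigorous as an i.i.d.\ sequence independent of the predictable sampling decisions — the usual stack-of-rewards (``tape'') construction — is exactly what licenses the passage from a random horizon to a union over $s \ge 1$. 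I expect this decoupling, rather than the elementary tail computation, to be the one step requiring care.
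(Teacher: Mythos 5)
Your proof is correct, and while it shares the paper's outer skeleton, its central estimate proceeds differently. The paper makes the same two reductions you do — a union bound over arms, then the stack-of-rewards identification $\hat{\mu}_a(t) = \hat{\mu}_{a, N_a(t)}$ of Equation~\eqref{eq:local_estimate} (invoked, as you anticipate, by reference to the standard construction) to replace the random times $t \in \segent{K}{\tau_\delta}$ by a union over deterministic pull counts $n \geq 1$. But for the time-uniform deviation bound itself, the paper uses a geometric peeling: it partitions $n \in [2^k, 2^{k+1}]$, applies a maximal inequality for sums of sub-Gaussian variables on each block (using that $n \mapsto nC_\gamma(n)$ is non-decreasing to pass to the left endpoint of the block), and sums $2\gamma \sum_{k \geq 0} 2^{-(k+2)} = \gamma$ exactly. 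You instead take a plain union bound over every fixed sample size $s$, which is legitimate here precisely because $C_\gamma(s)$ carries a full $\log(4s/\gamma)$ under the square root: the pointwise Gaussian tails decay like $\gamma^2/(8K^2 s^2)$, the series converges, and your computation (including the identity $s\,C_{\gamma/K}(s)^2/2 = 2\log(4sK/\gamma)$ and the final comparison $\gamma^2\pi^2/(48K) \leq \gamma$) is accurate. Your route is more elementary — no maximal inequality is needed — and in fact yields a strictly tighter bound of order $\gamma^2$ rather than the paper's exact $\gamma$; the peeling machinery would become genuinely necessary only for narrower radii of law-of-the-iterated-logarithm type, with $\sqrt{\log\log s/s}$ scaling, where the naive union over $s$ diverges. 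Your closing paragraph also correctly isolates the one delicate point, namely that adaptivity of $N_a(t)$ is neutralized by working on the fixed i.i.d.\ reward tape; this is exactly the role Equation~\eqref{eq:local_estimate} plays in the paper's argument.
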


%\change{The time-uniform confidence guarantee is a key ingredient for the non-asymptotic analysis of \EBS, together with the analysis of Section~\ref{sec:sample_complexity}. This is a major guarantee that is not possible to obtain for \TaS. }

\paragraph*{Stopping rule} Following~\cite{GK16}, our stopping rule relies on the statistic
\[ Z(t) = \max_{a \in \Segent{K}} \min_{b \neq a} 
	Z_{a, b} (t) \;,\]
where $Z_{a, b}(t)$ is the Generalized Likelihood Ratio statistic (see~\citealp{Che59}), equal in the Gaussian case to
\[ Z_{a, b} (t) = \frac{1}{2} \frac{N_a(t) N_b(t)}{N_a(t) + N_b(t)} (\hat{\mu}_a (t) - \hat{\mu}_b (t)) \big|\hat{\mu}_a (t) - \hat{\mu}_b (t)\big|\;. \]
%with $\sgn(u) = \ind_{u>0}- \ind_{u<0}$ the sign of $u$.

%\change{\paragraph*{Improvements with respect to \TaS} \EBS improves the behavior of \TaS in several senses: exploration is conducted in a statistically natural way, using confidence bounds and leading to a stabilized sampling strategy that is much easier to follow and understand. While the learner has not collected enough information, the strategy explores uniformly, which is the expected behavior and was not the case of \TaS for which the sampling strategy fluctuates a lot at the beginning (see Section~\ref{sec:experiments}). 
%Furthermore, we will see that the exploration strategy is adequate for obtaining non-asymptotic bounds, while the one of \TaS is not. 
%}
\begin{algorithm}
	\caption{\EBS \label{algo:ExplorationBiased}}

	\DontPrintSemicolon
	\SetKwInOut{Input}{input}\SetKwInOut{Output}{output}
	\KwIn{confidence level $\delta$}
	\MyKwIn{threshold function $\beta(t, \delta)$}
	\MyKwIn{confidence parameter $\gamma$}
	\KwOut{stopping time $\tau_\delta$}
	\MyKwOut{estimated best arm $\hat{a}_{\tau_\delta}$}

	\BlankLine
	Observe each arm once ; $t \gets K$ \;
	\For{$s = 0$ \KwTo $K-1$} {
		$\tilde{\vw}(s) \gets (1/K, \dots, 1/K)$
	}
	\While{$Z(t) \leq \beta(t, \delta)$}{
		%\For{$a \in \Segent{K}$} {
		%	$\displaystyle \underline{\mu}_a(t),~ \ov{\mu}_a(t) \gets \hat{\mu}_a(t) - C_{\gamma/K} (N_a(t)),~ \hat{\mu}_a(t) + C_{\gamma/K} (N_a(t))$
		%}
		%$\cC\cR_{\vmu}(t) \gets \prod_{a \in \Segent{K}} [\underline{\mu}_a(t), \ov{\mu}_a(t)]$\;
		$\cC\cR_{\vmu}(t) \gets \prod_{a \in \Segent{K}} [\hat{\mu}_a(t) \pm C_{\gamma/K} (N_a(t))]$\;
		$(\tilde{\vmu}(t), \tilde{\vw}(t)) \gets $ {\sc Exploration-Biased Weights}($\cC\cR_{\vmu}(t)$)\;
		%Choose $A_{t+1} \in \argmin_{a \in \Segent{K}} N_a(t) - t\tilde{w}_a(t)$\;	% D-tracking
		Choose $A_{t+1} \in \argmin_{a \in \Segent{K}} N_a(t) - \underset{s \in [t]}{\sum} \tilde{w}_a(s)$ \;	% C-tracking
		Observe $Y_{A_{t+1}}$ and increase $t$ by $1$
	}
	$\tau_\delta \gets t$ ; $\hat{a}_{\tau_\delta} \gets \argmax_{a \in \Segent{K}} \hat{\mu}_a (t)$
\end{algorithm}

The \EBS strategy is summarized in Algorithm~\ref{algo:ExplorationBiased}. As explained in~\cite{GK16}, one can either follow the exploration-biased weights directly (D-tracking) or their cumulative sums (C-tracking). For the simplicity of the proofs, we use C-tracking in the analysis, but we ran the experiments with both options, as D-tracking appears to perform slightly better (replace $\sum_{s \in [t]} \tilde{w}_a(s)$ by $t\tilde{w}_a(t)$ in the description of Algorithm~\ref{algo:ExplorationBiased} for D-tracking). 

It happens that the choice of confidence regions given by Equation~\eqref{eq:def_CR} leads to a minimal exploration rate for each arm of order $\sqrt{t}$. What is surprising is that this is exactly the arbitrary rate used by \TaS for forced exploration, which appears here naturally. 

\begin{lemma} \label{lem:LB_exploration_rate} For any choice of parameters and $\vmu \in \cG$, \EBS satisfies
\[ \forall t \in \segent{0}{\tau_\delta}, \forall a \in \Segent{K}, \quad N_a(t) \geq \frac{2}{K} \sqrt{t} - K\;. \]
\end{lemma}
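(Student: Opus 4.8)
The plan is to turn the statement into a purely deterministic (pathwise) bound in two moves: first convert counts into cumulative target weights via the tracking rule, and then lower bound the exploration‑biased weights using the width of the confidence intervals together with the quantitative regularity of $\vw$ from Section~\ref{subsec:regularity}. First I would invoke the cumulative‑tracking property of C‑tracking: since $A_{t+1}\in\argmin_{a}N_a(t)-\sum_{s<t}\tilde{w}_a(s)$ and each $\tilde{\vw}(s)$ is a probability vector, the standard argument (as in~\cite{GK16}) gives, for every arm $a$ and every $t\in\segent{0}{\tau_\delta}$,
\[
  N_a(t)\;\geq\;\sum_{s=0}^{t-1}\tilde{w}_a(s)\;-\;(K-1)\;.
\]
Thus it suffices to show that each cumulative target weight is at least of order $\sqrt{t}$. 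During the initialization phase $s\in\segent{0}{K-1}$ we have $\tilde{w}_a(s)=1/K$, so the base case $t\leq K$ is trivial because there $\tfrac{2}{K}\sqrt{t}-K<0\leq N_a(t)$; from now on I focus on $s\geq K$.

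The crux is a per‑step lower bound on the weights produced by Algorithm~\ref{algo:OptimisticWeights}. By Proposition~\ref{prop:optimistic_weights}, $\tilde{\vmu}(s)$ maximizes $w_{\min}$ over $\cC\cR_{\vmu}(s)$, so for \emph{any} feasible $\vnu\in\cC\cR_{\vmu}(s)$ one has $w_{\min}(\tilde{\vmu}(s))\geq w_{\min}(\vnu)$. The key quantitative input is that the half‑width of the interval of arm $a$ satisfies $C_{\gamma/K}(N_a(s))=2\sqrt{\log(4KN_a(s)/\gamma)/N_a(s)}\geq 2/\sqrt{N_a(s)}$. Exploiting this room inside $\cC\cR_{\vmu}(s)$ and the quantitative regularity of $\vnu\mapsto\vw(\vnu)$ established in Section~\ref{subsec:regularity} (controlling $w_{\min}$ in terms of the gaps, recalling all gaps lie in $[0,1]$), the aim is to prove an estimate of the form
\[
  \tilde{w}_a(s)\;\geq\;c_K\,C_{\gamma/K}(N_a(s))^2\;\geq\;\frac{c_K}{N_a(s)}\;,
\]
with $c_K$ depending only on $K$ (and possibly $\gamma$). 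This is exactly where the strength of the new regularity bounds is needed — continuity alone, as available before this work, is insufficient — and I expect this step to be the main obstacle: one must translate a lower bound on a single confidence width into a lower bound on the weight the \emph{argmax} over the potential‑best candidates assigns to that arm, which requires the monotonicity results of Section~\ref{subsec:monotonicity_minmaxpb} used in the proof of Proposition~\ref{prop:optimistic_weights}.

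Granting the key estimate, the proof closes by a clean summation that avoids any circular induction. The point is that each $N_a(\cdot)$ is non‑decreasing, so $N_a(s)\leq N_a(t)$ for $s\leq t$, whence
\[
  \sum_{s=0}^{t-1}\tilde{w}_a(s)\;\geq\;\sum_{s=K}^{t-1}\frac{c_K}{N_a(s)}\;\geq\;\frac{c_K\,(t-K)}{N_a(t)}\;.
\]
Combining with the tracking inequality yields $N_a(t)\bigl(N_a(t)+K-1\bigr)\geq c_K(t-K)$, a quadratic inequality in $N_a(t)$ whose resolution gives $N_a(t)\geq\sqrt{c_K\,t}-O(K)$; tuning the constant $c_K$ coming from Section~\ref{subsec:regularity} produces precisely $N_a(t)\geq\tfrac{2}{K}\sqrt{t}-K$. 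In short, the only genuinely hard ingredient is the weight lower bound $\tilde{w}_a(s)\gtrsim 1/N_a(s)$; the tracking reduction and the monotonicity‑plus‑quadratic closing are routine once it is in hand, and it is this $C_{\gamma/K}(n)^2\sim 1/n$ scaling that makes the $\sqrt{t}$ rate emerge — matching the otherwise arbitrary forced‑exploration rate of \TaS.
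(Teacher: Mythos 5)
Your first and last steps are sound and match the paper: the C-tracking inequality $N_a(t)\geq\sum_{s<t}\tilde{w}_a(s)-(K-1)$ is exactly the paper's Lemma~\ref{lem:control_N}, and your quadratic-inequality closing would be routine. The genuine gap is the key estimate $\tilde{w}_a(s)\geq c_K\,C_{\gamma/K}(N_a(s))^2$, which you explicitly leave unproved --- and which is in fact false, for a structural reason. Take $K=3$ and the confidence region with intervals $[0.9,\,0.9+\epsilon]$, $[0.9-\epsilon,\,0.9]$ and $[0,\,0.5]$ (all widths realizable for suitable $N_1,N_2,N_3$, with $N_3$ fixed as $\epsilon\to0$). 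Then $\text{maxLB}=0.9>\text{minUB}=0.5$, $\text{PotentialBest}=\{1\}$, and Algorithm~\ref{algo:OptimisticWeights} returns $\tilde{\vmu}=(0.9+\epsilon,\,0.9-\epsilon,\,0.5)$, with gaps $2\epsilon$ and $0.4+\epsilon$. By Equation~\eqref{eq:def_wa}, since here $r\approx 2/(2\epsilon)^2$, one gets $\tilde{w}_3=w_{\min}(\tilde{\vmu})\approx(2\epsilon)^2/\bigl(4(0.4+\epsilon)^2\bigr)\to0$ as $\epsilon\to0$, while $C_{\gamma/K}(N_3)$ stays equal to the fixed constant $1/4$; so no $c_K$ depending only on $K$ and $\gamma$ can work. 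The point your plan misses is that, through $r$, the weight of arm $a$ is governed by the \emph{smallest} gap of $\tilde{\vmu}$, i.e.\ by the narrowest intervals (belonging to the most-sampled arms), not by arm $a$'s own interval width; hence no per-arm bound in terms of $N_a(s)$ alone can hold, and the paper never attempts one.

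The paper instead bounds the minimum weight uniformly, $\tilde{w}_a(s)\geq\tilde{w}_{\min}(s)$, in two steps: Lemma~\ref{lem:min_optimistic_Delta} shows $\Delta_{\min}(\tilde{\vmu}(s))\geq\min_a(\ov{\mu}_a-\underline{\mu}_a)$, the width of the \emph{narrowest} interval, which is $\geq 4/\sqrt{s}$ because $N_a(s)\leq s$ --- this is exactly where all arm-dependent counts are replaced by $s$, the move your plan tries to avoid; then Lemma~\ref{lem:bound w_min} converts this gap bound \emph{linearly} into $\tilde{w}_{\min}(s)\geq\Delta_{\min}(\tilde{\vmu}(s))/(2K)\geq 2/(K\sqrt{s})$, and the direct summation $\sum_{s\leq t}s^{-1/2}\geq 2\sqrt{t}$ finishes with no quadratic inequality. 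Note that your quadratic scaling ($C^2\sim 1/n$), if attached to $s$ rather than $N_a(s)$, would yield only $\tilde{w}_{\min}(s)\gtrsim 1/s$ and hence a $\log t$ bound: the $\sqrt{t}$ rate hinges entirely on the linear gap-to-weight conversion. A caveat worth flagging: the same two-near-tied-arms example gives $w_{\min}\approx\Delta_{\min}^2/(4\Delta_{\max}^2)$, which violates the linear bound $w_{\min}\geq\Delta_{\min}/(2K)$ of Lemma~\ref{lem:bound w_min} (its proof appears to drop the square in Equation~\eqref{eq:def_wa}); so your quadratic intuition is actually closer to the true gap-to-weight relation, but this observation puts the paper's own constant in question rather than rescuing your per-arm estimate.
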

The proof of this lemma can be found in Appendix~\ref{appendixF1}.

The practical advantages of \EBS over \TaS are discussed in Section~\ref{sec:experiments}. On the theoretical level, we now show that (contrary to \TaS) this exploration strategy is adequate for obtaining non-asymptotic bounds.

\subsection{Theoretical Results}

\paragraph*{A $\delta$-correct strategy} The  $\delta$-correctness of our strategy, which relies on the same stopping rule as \TaS, is a simple consequence of~\citet[Proposition 12]{GK16}.
\begin{proposition} \label{prop:delta_correct} For any $\delta, \gamma \in (0, 1)$ and $\alpha > 1$, there exists a constant $R = R(K, \alpha)$ such that \EBS with parameters $\delta, \gamma$ and threshold 
	\begin{equation} \label{eq:threshold}
		\beta(t, \delta) = \log \Bigl(\frac{Rt^\alpha}{\delta} \Bigr)
	\end{equation}
	is $\delta$-correct.
\end{proposition}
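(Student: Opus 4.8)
The plan is to reduce the $\delta$-correctness of \EBS to the general sufficient condition on the stopping threshold established by \citet[Proposition 12]{GK16}. The crucial observation is that \EBS shares exactly the same stopping statistic $Z(t)$ and the same stopping rule ($\tau_\delta = \inf\{t : Z(t) > \beta(t,\delta)\}$) as \TaS; the only difference between the two algorithms lies in the \emph{sampling rule}. Since the error event $\{\tau_\delta < +\infty,\ \hat a_{\tau_\delta} \neq a^*(\vmu)\}$ and its probability bound depend only on how the GLR statistic $Z(t)$ is compared to $\beta(t,\delta)$ and \emph{not} on which sampling policy generated the counts $N_a(t)$, the argument of \cite{GK16} applies verbatim.

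The key steps, in order, are as follows. First I would recall the structure of the GLR stopping statistic: on the error event, the estimated best arm $\hat a_{\tau_\delta}$ differs from $a^*(\vmu)$, so $\vmu$ lies in the alternative $\Alt$ relative to the decision $\hat a_{\tau_\delta}$, and by construction $Z(\tau_\delta) > \beta(\tau_\delta,\delta)$. Second, I would invoke the deviation bound of \citet[Proposition 12]{GK16}, which controls uniformly over time the probability that the GLR statistic exceeds a threshold of the form $\log(Rt^\alpha/\delta)$ while the empirical best arm is wrong; this is a self-normalized concentration argument for sums of the Gaussian increments that is completely agnostic to the sampling rule, requiring only that each $A_t$ be $\cF_{t-1}$-measurable (which holds for \EBS since $A_{t+1}$ depends only on past counts and the cumulative weights $\tilde w_a(s)$ for $s\le t$). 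Third, that proposition guarantees the existence of a constant $R=R(K,\alpha)$, depending only on the number of arms and the polynomial exponent $\alpha>1$, such that the error probability is bounded by $\delta$ whenever the threshold is chosen as in Equation~\eqref{eq:threshold}. Combining these gives
\[
  \P_{\vmu}\bigl(\tau_\delta < +\infty,\ \hat a_{\tau_\delta}\neq a^*(\vmu)\bigr)\leq \delta \;,
\]
uniformly over $\vmu \in \cG^*$, which is precisely $\delta$-correctness.

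The only point requiring care, rather than a genuine obstacle, is to verify that the hypotheses of \citet[Proposition 12]{GK16} are met by \EBS despite the modified sampling: specifically that the sampling rule remains adapted to the filtration $(\cF_t)$ and that the threshold exponent $\alpha>1$ falls in the admissible range for which their constant $R$ is finite. Both are immediate from the description of Algorithm~\ref{algo:ExplorationBiased}, since the exploration-biased weights $\tilde{\vw}(s)$ are computed from $\hat{\vmu}(s)$ and $\vN(s)$, hence $\cF_s$-measurable. Because the confidence parameter $\gamma$ plays no role in the stopping or decision rules (it only shapes the confidence region used for sampling), the bound is independent of $\gamma$, which explains why the statement holds \emph{for any} $\gamma\in(0,1)$. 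I do not expect any hard step here: the whole content is that $\delta$-correctness is a property of the stopping rule alone, and \EBS inherits it directly from the analysis of \TaS.
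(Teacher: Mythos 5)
Your proposal is correct and follows exactly the paper's own route: the paper proves Proposition~\ref{prop:delta_correct} precisely by observing that \EBS uses the same stopping and decision rules as \TaS and invoking \citet[Proposition 12]{GK16}, whose guarantee holds for \emph{any} adapted sampling rule and yields the constant $R = R(K,\alpha)$. Your additional checks (adaptedness of the exploration-biased sampling rule, irrelevance of $\gamma$ to the stopping rule) are the right ones and match the paper's implicit reasoning.
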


Our main result is to obtain high probability bounds for $\tau_\delta$ in finite horizon for \EBS, which is summarized in the following theorem. %Note that such a result cannot be proved for the \TaS\ algorithm.%\change{(partly because there is no time-uniform confidence guarantee for this algorithm)}.

\begin{theorem}[Non-asymptotic bound] \label{thm:bound} Fix $\gamma \in (0, 1)$, $\alpha \in [1, 2]$, $\eta \in (0, 1]$ and let $\vmu \in \cG^*$. There exists an event $\cE$ of probability at least $1-\gamma$ and $\delta_0 = \delta_0(\vmu, K, \gamma, \eta, \alpha) > 0$ such that for any $0 < \delta \leq \delta_0$, algorithm \EBS with the threshold of Equation~\eqref{eq:threshold} satisfies
	\begin{equation} \label{eq:bound_tail}
		\P_{\vmu}\bigl( \tau_\delta > t ~\cap~ \cE \bigr) \leq 2Kt\exp \Bigl( - \frac{tw_{\min}(\vmu)}{4{T(\vmu)}^2} \frac{1}{\log^{\frac{2}{3}}(1/\delta)} \Bigr)
	\end{equation}
	for any $t > (1+\eta) T(\vmu) \log(1/\delta)$, and 
	\begin{equation} \label{eq:bound_E_tau}
	\E_{\vmu}[\tau_\delta \ind_\cE] \leq (1+\eta) T(\vmu)\log(1/\delta) + \frac{2^7 K {T(\vmu)}^4}{{w_{\min}(\vmu)}^2} \exp\Bigl( -\frac{w_{\min}(\vmu)}{4T(\vmu)} \log^{\frac{1}{3}}(1/\delta)\Bigr) \log^2(1/\delta) \;. 
	\end{equation}
	%where $T = T(\vmu)$ and $\vw = \vw(\vmu)$.
\end{theorem}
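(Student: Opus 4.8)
The plan is to take for $\cE$ the time-uniform confidence event of Lemma~\ref{lem:CR_mu}, namely $\cE=\{\vmu\in\cC\cR_{\vmu}(t)\text{ for all }t\in\segent{K}{\tau_\delta}\}$, so that $\P_{\vmu}(\cE)\ge1-\gamma$. On $\cE$ two deterministic facts hold. First, since $\vmu\in\cC\cR_{\vmu}(t)$, Proposition~\ref{prop:optimistic_weights} gives $w_{\min}(\tilde\vmu(t))\ge w_{\min}(\vmu)$, and both $\vmu$ and $\tilde\vmu(t)$ lie in the same box $\cC\cR_{\vmu}(t)$, whose half-widths $C_{\gamma/K}(N_a(t))$ are controlled using Lemma~\ref{lem:LB_exploration_rate} ($N_a(t)\ge\frac2K\sqrt t-K$); hence $\|\tilde\vmu(t)-\vmu\|_\infty\le\rho(t)$ for an explicit deterministic $\rho(t)\to0$. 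Feeding this into the quantitative regularity of $\vnu\mapsto\vw(\vnu)$ of Section~\ref{subsec:regularity} yields $\|\tilde\vw(t)-\vw(\vmu)\|\le\psi(t)$ with $\psi(t)\to0$. Summing this bound along the standard C-tracking guarantee $N_a(t)\ge\sum_{s<t}\tilde w_a(s)-(K-1)$ gives, still deterministically on $\cE$,
\[
N_a(t)\ge t\,w_a(\vmu)-\textstyle\sum_{s<t}\psi(s)-(K-1)\ge t\,w_a(\vmu)(1-\zeta_t)
\]
for every arm $a$, with $\zeta_t\to0$; in particular $\zeta_t\le\tfrac12$ and thus $N_a(t)\ge\tfrac12\,t\,w_{\min}(\vmu)$ as soon as $t$ exceeds some $t_1(\vmu,K,\gamma)$.

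The heart of the argument is a lower bound on the stopping statistic on a refinement of $\cE$. Set $\epsilon=\epsilon(\delta)=T(\vmu)^{-1}\log^{-1/3}(1/\delta)$ and introduce $\cI_t=\{|\hat\mu_a(t)-\mu_a|\le\epsilon\text{ for all }a\}$. Recall that $Z(t)$ is the Gaussian GLR statistic, equal to $\inf_{\vlambda\in\Alt(\hat\vmu(t))}\sum_a N_a(t)(\hat\mu_a(t)-\lambda_a)^2/2$. On $\cE\cap\cI_t$, as soon as $\epsilon<\Delta_{\min}(\vmu)/2$ the empirical best arm equals $a^*(\vmu)$, so $\Alt(\hat\vmu(t))=\Alt(\vmu)$; the elementary bound $(\hat\mu_a-\lambda_a)^2\ge(\mu_a-\lambda_a)^2-2\epsilon$ (valid since all means lie in $[0,1]$), combined with $\sum_a N_a(t)=t$, the per-arm bound of the previous paragraph, and the definition~\eqref{eq:def_T} of $T(\vmu)$, gives
\[
Z(t)\ge\inf_{\vlambda\in\Alt(\vmu)}\sum_{a\in\Segent{K}}N_a(t)\frac{(\mu_a-\lambda_a)^2}{2}-t\epsilon\ge\frac{t}{T(\vmu)}\bigl(1-\zeta_t-\log^{-1/3}(1/\delta)\bigr).
\]
For $t>(1+\eta)T(\vmu)\log(1/\delta)$ and $\delta\le\delta_0$ small enough that $\zeta_t+\log^{-1/3}(1/\delta)$ and the lower-order term $\alpha\log t+\log R$ in $\beta(t,\delta)$ are both absorbed by the slack $\eta$, this forces $Z(t)>\beta(t,\delta)$, i.e.\ $\tau_\delta\le t$. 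Hence $\{\tau_\delta>t\}\cap\cE\subseteq\cE\cap\cI_t^c$ throughout this range of $t$.

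It remains to bound $\P_{\vmu}(\cE\cap\cI_t^c)$. On $\cE$ we have $N_a(t)\ge\tfrac12 t\,w_{\min}(\vmu)$, so, decomposing over the (adaptive) value $n=N_a(t)\in\{\lceil tw_{\min}(\vmu)/2\rceil,\dots,t\}$ and applying the Gaussian deviation bound $\P(|\bar Y_{a,n}-\mu_a|>\epsilon)\le2e^{-n\epsilon^2/2}$ to the average $\bar Y_{a,n}$ of the first $n$ i.i.d.\ draws of arm $a$,
\[
\P_{\vmu}(\cE\cap\cI_t^c)\le\sum_{a\in\Segent{K}}\sum_{n=\lceil tw_{\min}(\vmu)/2\rceil}^{t}2e^{-n\epsilon^2/2}\le2Kt\,e^{-tw_{\min}(\vmu)\epsilon^2/4},
\]
which, with $\epsilon^2=T(\vmu)^{-2}\log^{-2/3}(1/\delta)$, is exactly~\eqref{eq:bound_tail}. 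The expectation bound~\eqref{eq:bound_E_tau} then follows from $\E_{\vmu}[\tau_\delta\ind_\cE]=\int_0^\infty\P_{\vmu}(\tau_\delta>t\cap\cE)\,dt$: splitting at $t_0=(1+\eta)T(\vmu)\log(1/\delta)$, bounding the lower part by $t_0$ and the upper part by $\int 2Kt\,e^{-ct}\,dt=\frac{2K}{c^2}(ct_0+1)e^{-ct_0}$ with $c=\frac{w_{\min}(\vmu)}{4T(\vmu)^2}\log^{-2/3}(1/\delta)$, and using $ct_0=\frac{(1+\eta)w_{\min}(\vmu)}{4T(\vmu)}\log^{1/3}(1/\delta)\ge\frac{w_{\min}(\vmu)}{4T(\vmu)}\log^{1/3}(1/\delta)$ to reach the stated form.

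I expect the main obstacle to be the first paragraph: converting the mere continuity of $\vnu\mapsto\vw(\vnu)$ into an explicit modulus $\psi$ and propagating it, through cumulative C-tracking, into a quantitative $\zeta_t$, then collecting the several ``$\delta$ small enough'' requirements (that $\epsilon<\Delta_{\min}(\vmu)/2$, that $\zeta_t+\log^{-1/3}(1/\delta)$ stays below $\eta/(1+\eta)$ on the relevant range, and that $\alpha\log t+\log R=o(\log(1/\delta))$) into a single $\delta_0(\vmu,K,\gamma,\eta,\alpha)$. The quantitative regularity itself, the deepest ingredient, is supplied by Section~\ref{subsec:regularity}; the lower bound on $Z(t)$ and the two tail computations are then comparatively routine.
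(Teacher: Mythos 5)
Your proof is correct and its skeleton coincides with the paper's: same event $\cE$ from Lemma~\ref{lem:CR_mu}, same three-step structure (quantitative tracking of $\vw(\vmu)$ by $\vN(t)/t$ on $\cE$ via Theorem~\ref{thm:regularity} and C-tracking; inclusion of $\{\tau_\delta>t\}\cap\cE$ into a deviation event at scale $C/T(\vmu)$ with $C=\log^{-1/3}(1/\delta)$; Hoeffding union bound over $N_a(t)\geq tw_{\min}(\vmu)/2$ as in Lemma~\ref{lem:bad_event_control}, then tail integration), and the same bookkeeping of ``$\delta$ small enough'' conditions into $\delta_0$. The one genuine divergence is in the middle step: where the paper invokes Proposition~\ref{prop:control_g} (itself resting on Lemmas~\ref{lem:moving_mu} and~\ref{lem:moving_w}, including the interpolation argument for bandits not sharing a best arm, and requiring the \emph{two-sided} relative bound $|N_a(t)/t-w_a|\leq\eta w_a$ of Lemma~\ref{lem:def_T1}), you lower-bound $Z(t)$ directly: on $\cI_t$ with $\epsilon<\Delta_{\min}(\vmu)/2$ the empirical best arm is correct so $\Alt(\hat{\vmu}(t))=\Alt(\vmu)$, the quadratic expansion gives $(\hat{\mu}_a-\lambda_a)^2\geq(\mu_a-\lambda_a)^2-2\epsilon|\mu_a-\lambda_a|$, and the coordinatewise monotonicity and homogeneity of $\vv\mapsto g(\vmu,\vv)$ let you get by with only the one-sided bound $N_a(t)\geq tw_a(1-\zeta_t)$. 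This is a real simplification: it bypasses Lemmas~\ref{lem:moving_mu}--\ref{lem:moving_w} entirely (the awkward case of non-shared best arms never arises on $\cI_t$) and halves the tracking work; what it loses is generality, since Proposition~\ref{prop:control_g} holds for arbitrary $\vmu'$ and $\vv$, whereas your bound is tailored to the event where the best arm is already identified. One spot needs a touch-up: the minimizing $\vlambda$ for $Z(t)$ has components that are weighted averages of empirical means, so on $\cI_t$ they lie in $[-\epsilon,1+\epsilon]$ rather than $[0,1]$, giving $|\mu_a-\lambda_a|\leq 1+\epsilon$ and hence $Z(t)\geq t\,g(\vmu,\vN(t)/t)-(1+\epsilon)t\epsilon$ instead of $-t\epsilon$; this constant is harmlessly absorbed into the slack $\eta$ and $\delta_0$, leaving both \eqref{eq:bound_tail} and \eqref{eq:bound_E_tau} intact (your integral evaluation $\int_{t_0}^\infty 2Kte^{-ct}\,\mathrm{d}t=\frac{2K}{c^2}(ct_0+1)e^{-ct_0}$ reproduces the paper's $2^7$ constant, just as its discrete sum in Lemma~\ref{lem:bound_S(C)} does).
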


\newpage
Note that: 
\begin{itemize}[itemsep=0cm, topsep=0cm]
	\item using the results of Section~\ref{sec:sample_complexity}, one can show that $w_{\min}(\vmu) \geq \frac{\Delta_{\min}(\vmu)}{2K}$ for any $\vmu \in \cG^*$ (see Lemma~\ref{lem:bound w_min} in Appendix~\ref{appendixF1}), 

	\item the proof of Theorem~\ref{thm:bound} provides an explicit expression for $\delta_0$, 
	
	\item the second term of Bound~\eqref{eq:bound_E_tau} tends to $0$ when $\delta$ decreases to $0$, and hence negligible with respect to the first term: the sample complexity is therefore arbitrarily close to the lower bound. % (however keep in mind that the bound only holds on an event of high probability)}.
\end{itemize}

We additionally prove that, from an asymptotic point of view, the \EBS algorithm presents the same guarantees as \TaS (see also Theorem~\ref{thm:asymptotic_optimality_as} in Appendix~\ref{appendixF2}):
%\begin{theorem}[Almost sure asymptotic bound] \label{thm:asymptotic_optimality_as} Fix $\gamma \in (0, 1)$, $\alpha \in [1, e/2]$. For any $\vmu \in \cG^*$, Algorithm \EBS with the threshold of Equation~\eqref{eq:threshold} satisfies
%\[ \limsup_{\delta \to 0} \frac{\tau_{\delta}}{\log(1/\delta)} \leq \alpha T(\vmu) \quad \P_{\vmu}\text{-a.s.} \;. \]
%\end{theorem}
\begin{theorem}[Asymptotic optimality in expectation] \label{thm:asymptotic_optimality_E} Fix $\gamma \in (0, 1)$, $\alpha \in (1, e/2]$ and let $\vmu \in \cG^*$. Algorithm \EBS with the threshold of Equation~\eqref{eq:threshold} satisfies
\[ \textstyle \limsup_{\delta \to 0} \frac{\E_{\vmu}[\tau_{\delta}]}{\log(1/\delta)} \leq \alpha T(\vmu) \;. \]
\end{theorem}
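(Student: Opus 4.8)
The plan is to derive the asymptotic optimality in expectation (Theorem~\ref{thm:asymptotic_optimality_E}) as a consequence of the non-asymptotic tail bound already established in Theorem~\ref{thm:bound}, combined with the time-uniform confidence guarantee of Lemma~\ref{lem:CR_mu}. The key observation is that Theorem~\ref{thm:bound} only controls $\tau_\delta$ on the good event $\cE$, which has probability at least $1-\gamma$; to obtain a bound on the full expectation $\E_{\vmu}[\tau_\delta]$, I must also bound the contribution of the complementary event $\cE^c$. First I would write $\E_{\vmu}[\tau_\delta] = \E_{\vmu}[\tau_\delta \ind_\cE] + \E_{\vmu}[\tau_\delta \ind_{\cE^c}]$ and handle the two terms separately.

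For the first term, I would invoke Bound~\eqref{eq:bound_E_tau} directly. Fixing an arbitrary $\eta \in (0,1]$ and noting that the second term there decays to $0$ as $\delta \to 0$, dividing by $\log(1/\delta)$ and taking $\limsup_{\delta \to 0}$ yields $\limsup_{\delta \to 0} \E_{\vmu}[\tau_\delta \ind_\cE]/\log(1/\delta) \leq (1+\eta) T(\vmu)$. Since $\eta$ is arbitrary, this part contributes at most $T(\vmu)$, which is even better than the claimed bound $\alpha T(\vmu)$ (recall $\alpha > 1$). The subtlety is that Theorem~\ref{thm:bound} requires $\delta \leq \delta_0(\vmu, K, \gamma, \eta, \alpha)$ and $\alpha \in [1,2]$, which is compatible with the hypothesis $\alpha \in (1, e/2]$ here since $e/2 < 2$; I must simply apply the bound for $\delta$ small enough.

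For the second term, the expectation of $\tau_\delta$ on the low-probability event $\cE^c$, I would exploit the $\sqrt{t}$ forced-exploration lower bound of Lemma~\ref{lem:LB_exploration_rate} together with the structure of the threshold~\eqref{eq:threshold}. The standard argument (as in~\cite{GK16}) is that whenever the empirical means are close to the true means — which by concentration happens with overwhelming probability once each arm is sampled enough — the statistic $Z(t)$ crosses the threshold $\beta(t,\delta) = \log(Rt^\alpha/\delta)$ no later than a time of order $T(\vmu)\log(1/\delta)$. Using Lemma~\ref{lem:LB_exploration_rate}, each arm's count grows at least like $\tfrac{2}{K}\sqrt{t}$, so deviations of $\hat{\vmu}(t)$ from $\vmu$ become exponentially unlikely, and the tail of $\tau_\delta$ decays fast enough that $\E_{\vmu}[\tau_\delta \ind_{\cE^c}]$ is $o(\log(1/\delta))$, or at worst contributes a negligible amount relative to the leading term. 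This controls the full expectation.

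The main obstacle I anticipate is cleanly handling the contribution of $\cE^c$ without a ready-made tail bound there, since Theorem~\ref{thm:bound} is stated only on $\cE$. The resolution is to establish a separate crude deviation bound for $\tau_\delta$ that holds unconditionally: using the Gaussian concentration of each $\hat{\mu}_a(t)$ around $\mu_a$ given the $\sqrt{t}$ exploration guarantee, one shows $\P_{\vmu}(\tau_\delta > t)$ decays sub-exponentially in $t$ for $t$ large relative to $T(\vmu)\log(1/\delta)$, uniformly over the whole sample space. Summing this tail over $t$ and combining with the event-restricted bound gives the full-expectation statement. The role of the constraint $\alpha \in (1, e/2]$ is that the extra $\alpha$ factor in the threshold inflates the crossing time by a factor $\alpha$ in the leading order, which is exactly why the final bound reads $\alpha T(\vmu)$ rather than $T(\vmu)$; I would make this explicit by tracking how the $t^\alpha$ term in $\beta(t,\delta)$ propagates through the time at which $Z(t)$ exceeds the threshold.
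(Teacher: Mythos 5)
Your handling of the first term is fine: Bound~\eqref{eq:bound_E_tau} indeed gives $\limsup_{\delta\to 0}\E_{\vmu}[\tau_\delta\ind_{\cE}]/\log(1/\delta)\leq(1+\eta)T(\vmu)$ for each fixed $\eta$, hence at most $T(\vmu)$. The genuine gap is your claim that $\E_{\vmu}[\tau_\delta\ind_{\cE^c}]$ is $o(\log(1/\delta))$. It is not, and no refinement of your mechanism can make it so, because $\P_{\vmu}(\cE^c)$ is bounded only by the \emph{fixed} constant $\gamma$, which does not shrink with $\delta$. Even granting the unconditional tail bound you sketch --- say $\P_{\vmu}(\tau_\delta>t)$ negligible for $t\geq c\,T(\vmu)\log(1/\delta)$ with some $c\geq 1$ --- the best available estimate is
\[ \E_{\vmu}[\tau_\delta\ind_{\cE^c}] \;\leq\; \sum_{t\leq cT(\vmu)\log(1/\delta)}\P_{\vmu}(\cE^c) \;+ \sum_{t> cT(\vmu)\log(1/\delta)}\P_{\vmu}(\tau_\delta>t) \;\leq\; \gamma\, c\, T(\vmu)\log(1/\delta) + o\bigl(\log(1/\delta)\bigr)\;, \]
since below the crossing scale you can only bound $\P_{\vmu}(\tau_\delta>t,\,\cE^c)$ by $\gamma$. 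Your total is then at best $(1+\gamma c)\,T(\vmu)$, which exceeds $\alpha T(\vmu)$ whenever $\gamma> (\alpha-1)/c$; even with the sharpest conceivable tail ($c=1+o(1)$) this fails for every $\gamma>\alpha-1$, and since $\alpha\leq e/2$ forces $\alpha-1<0.36$ while the theorem fixes an arbitrary $\gamma\in(0,1)$, the decomposition route cannot deliver the stated constant. A secondary weakness: your ``crude'' unconditional tail is not crude. Concentration of $\hat{\vmu}(t)$ from the $\sqrt{t}$-exploration of Lemma~\ref{lem:LB_exploration_rate} does not by itself push $Z(t)=t\,g(\hat{\vmu}(t),\vN(t)/t)$ past $\beta(t,\delta)$ at scale $T(\vmu)\log(1/\delta)$: you also need the sampled \emph{proportions} $\vN(t)/t$ to approach $\vw(\vmu)$, which requires continuity of $\vnu\mapsto\vw(\vnu)$ at $\vmu$, the shrinkage of the confidence radii so that the exploration-biased bandit $\tilde{\vmu}(t)$ tracks $\hat{\vmu}(t)$ (a step specific to \EBS), and the tracking Lemma~\ref{lem:control_N}. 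Once all of that is in place you have an unconditional bound on $\E_{\vmu}[\tau_\delta]$ directly, and the $\cE/\cE^c$ split together with Theorem~\ref{thm:bound} becomes both superfluous and lossy.

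That unconditional argument is precisely the paper's proof: Appendix~\ref{appendixF3} never invokes Theorem~\ref{thm:bound} or the fixed-probability event $\cE$, but directly adapts \citet[Theorem 14]{GK16}. For each horizon $T$ it introduces an event $\cE_T$ on which $\hat{\vmu}(t)$ remains in a neighborhood $\cI_\varepsilon$ of $\vmu$ for all $t\in[T^{1/4},T]$, proves $\P_{\vmu}(\cE_T^c)\leq BT\exp(-CT^{1/8})$ using Lemma~\ref{lem:LB_exploration_rate} and Hoeffding (a bound summable over $T$, uniformly in $\delta$), shows that on $\cE_T$ the proportions $\vN(t)/t$ are within $3\varepsilon$ of $\vw(\vmu)$ for $t\geq\sqrt{T}$, and concludes via $\E_{\vmu}[\tau_\delta]\leq T_0(\delta)+T_\varepsilon+\sum_T\P_{\vmu}(\cE_T^c)$ with $T_0(\delta)$ of order $\alpha T(\vmu)\log(1/\delta)$, the factor $\alpha$ arising from inverting the threshold-crossing condition for $\beta(t,\delta)=\log(Rt^\alpha/\delta)$ (whence the constraint $\alpha\leq e/2$). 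The essential structural difference from your plan is that these good events have probability tending to $1$ as the horizon grows, whereas your $\cE$ is stuck at $1-\gamma$; this is exactly why the event-restricted guarantee of Theorem~\ref{thm:bound} cannot be upgraded to the expectation bound of Theorem~\ref{thm:asymptotic_optimality_E}.
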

Appendix~\ref{appendixD} will be devoted to the proof of Theorem~\ref{thm:bound} while the proof of Theorem %s~\ref{thm:asymptotic_optimality_as} and 
\ref{thm:asymptotic_optimality_E} can be found in Appendix~\ref{appendixF3}. 

It is worth mentioning that the guarantees of \EBS presented in this section hold true not only for Gaussian arms, but more generally for $1$-sub-Gaussian arms with means in $[0, 1]$ (in which case, of course, a better lower bound might hold); indeed, these proofs only rely on sub-Gaussian deviation bounds.

%%% 3) SAMPLE COMPLEXITY %%%

\section{About the sample complexity optimization problem} \label{sec:sample_complexity}

We now introduce a new method for solving the sample complexity optimization problem~\eqref{eq:def_T}. It comes with a new analysis that yields various bounds for the bandits characteristic constants together with monotonicity and regularity results. Detailed discussions and proofs are deferred to Appendix~\ref{appendixC}. 

In this section, %and its associated Appendix~\ref{appendixC}, 
letters $a, b, c$ always refer to arm indices, that is elements of $\Segent{K}$. In subindices for sums and infima, we sometimes omit to explicitly mention $\Segent{K}$ for simplicity: for example, given a fixed arm $b$, $\sum_{a \neq b}$ denotes the sum over arms $a \in \Segent{K} \setminus \{b\}$.

For any bandit $\vmu \in \cG$ and $\vv \in \Sigma_K$, we define:
\begin{align}  g(\vmu, \vv) &
	= \inf_{\vlambda \in \Alt(\vmu)} \sum_{a \in \Segent{K}} v_a \frac{(\mu_a-\lambda_a)^2}{2} \\
	&=  \frac{1}{2} \min_{a \neq a^*} \frac{v_{a^*}v_a}{v_{a^*}+v_a} \Delta_a(\vmu)^2 \;. \label{eq:def_g}
\end{align}
The easy proof of the second equality can be found in Appendix~\ref{appendixC1}. 
Function $g$ is twice useful, as the solution to the inner optimization problem~\eqref{eq:def_T}, and for the expression of the statistic $Z(t)$:
\begin{align}
	T(\vmu)^{-1}                    & = g(\vmu, \vw(\vmu)) \label{eq:link_T_g} \;,                         \\
	\text{and} \quad \quad \quad Z(t) & = t \,g \Big( \hat{\vmu} (t), \frac{\vN(t)}{t} \Big) \label{eq:link_Z_g}
\end{align}
with the convention $T(\vmu) = +\infty$ when $\vmu \in \cG \setminus \cG^*$.

Let in this section $\vmu \in \cG^*$ be a fixed bandit parameter. For the simplicity of the presentation, let $a^* = a^*(\vmu)$, $\vDelta = \vDelta(\vmu)$, $\vw = \vw(\vmu)$, $w_{\min} = w_{\min}(\vmu)$ and $T = T(\vmu)$.

\subsection{Solving the Optimization Problem} \label{subsec:optimization_problem}

We define
\begin{equation} \label{eq:def_phi}
	\phi_{\vmu}: r \in \Big(\frac{1}{\Delta_{\min}^2}, +\infty \Big) \longmapsto \sum_{a \neq a^*} \frac{1}{(r\Delta_a^2-1)^2} - 1 \;.
\end{equation}

\begin{lemma} \label{lem:convexity_phi} $\phi_{\vmu}$ is convex and strictly decreasing on $(1/\Delta_{\min}^2, +\infty)$, and thus has a unique root. 
\end{lemma}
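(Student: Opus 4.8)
The plan is to prove all three properties by working term by term, since $\phi_{\vmu}$ is just a finite sum of the simple functions $f_a(r) = (r\Delta_a^2 - 1)^{-2}$ indexed by $a \neq a^*$, minus the constant $1$. First I would record the key domain fact: because $\vmu \in \cG^*$ has a unique best arm, every $a \neq a^*$ satisfies $\Delta_a \geq \Delta_{\min} > 0$, so for $r > 1/\Delta_{\min}^2$ one has $r\Delta_a^2 \geq r\Delta_{\min}^2 > 1$ and hence $r\Delta_a^2 - 1 > 0$. This guarantees that each $f_a$ is well-defined, smooth and strictly positive on the whole interval, which is precisely what makes the termwise differentiation below legitimate.

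Next I would compute the first two derivatives of a single term. Differentiating gives $f_a'(r) = -2\Delta_a^2 (r\Delta_a^2 - 1)^{-3}$ and $f_a''(r) = 6\Delta_a^4 (r\Delta_a^2 - 1)^{-4}$. Using the positivity of the denominators established above, every $f_a'(r) < 0$ and every $f_a''(r) > 0$ on $(1/\Delta_{\min}^2, +\infty)$. Summing over $a \neq a^*$ (the constant $-1$ contributes nothing to the derivatives) yields $\phi_{\vmu}'(r) < 0$ and $\phi_{\vmu}''(r) > 0$, which is exactly strict monotonicity and convexity.

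Finally, for the existence and uniqueness of the root I would examine the two endpoints. As $r \downarrow 1/\Delta_{\min}^2$, I would single out any arm $a$ attaining the minimal gap $\Delta_a = \Delta_{\min}$; then $r\Delta_a^2 - 1 \downarrow 0^+$, so $f_a(r) \to +\infty$, and since the remaining terms stay nonnegative, $\phi_{\vmu}(r) \to +\infty$. As $r \to +\infty$, every $f_a(r) \to 0$, so $\phi_{\vmu}(r) \to -1 < 0$. Continuity together with the strict decrease then gives, by the intermediate value theorem, exactly one root.

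The argument is essentially routine; the only point requiring a little care is the behavior at the left endpoint, where one must exhibit an arm achieving $\Delta_{\min}$ to certify the blow-up of $\phi_{\vmu}$ (a term diverges only if its gap is exactly $\Delta_{\min}$, whereas arms with larger gaps keep their denominators bounded away from zero). Everything else—positivity of the denominators, the signs of the derivatives, and the limit at $+\infty$—follows directly once the domain constraint $r > 1/\Delta_{\min}^2$ is in hand, so I do not anticipate any genuine obstacle.
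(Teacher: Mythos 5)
Your proof is correct and complete: the positivity of each denominator on the stated interval, the termwise signs $f_a'(r)<0$ and $f_a''(r)>0$, and the endpoint limits ($+\infty$ as $r \downarrow 1/\Delta_{\min}^2$ via an arm attaining $\Delta_{\min}$, and $-1$ as $r \to +\infty$) together give strict decrease, convexity, and a unique root. The paper in fact states this lemma without proof, treating it as routine calculus, and your argument is precisely the standard one the authors implicitly rely on (e.g.\ the expressions for $\phi_{\vmu}'$ appear in Algorithm~\ref{algo:Newton_r}), so there is nothing to reconcile.
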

The following proposition shows that solving $\phi_{\vmu}(r) = 0$ directly gives a solution to Problem~\eqref{eq:def_T}.
\begin{proposition} \label{prop:links_r} 
	Let $r = r(\vmu)$ be the solution of $\phi_{\vmu}(r) = 0$. Then
	\begin{align}
		w_{a^*}                       & = \frac{1}{1+\sum_{a \neq a^*} \frac{1}{r\Delta_a^2 - 1}} \;, 	\label{eq:link_r_w1} \\
		\forall a \neq a^*, \quad w_a & = \frac{w_{a^*}}{r\Delta_a^2 - 1} \;, \label{eq:def_wa}                          \\
		\hbox{and \qquad} T           & = 2 \frac{r}{w_{a^*}} \;. 	\label{eq:link_r_T} 
	\end{align}
	Besides,
	\begin{equation}
		w_{a^*} = \sqrt{\sum_{a \neq a^*} {w_a}^2} \;.	\qquad \qquad ~ \label{eq:w1=norm2_wa}
	\end{equation}
\end{proposition}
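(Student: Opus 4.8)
The plan is to exploit the explicit form~\eqref{eq:def_g} of the inner problem, $g(\vmu,\vv)=\frac12\min_{a\neq a^*}\frac{v_{a^*}v_a}{v_{a^*}+v_a}\Delta_a^2$, and to reduce the $(K-1)$-dimensional maximization of $\vv\mapsto g(\vmu,\vv)$ over $\Sigma_K$ to a one-dimensional problem in the single scalar $r$. First I would establish the structure of the maximizer $\vw$. Since uniform weights already give $g(\vmu,\cdot)>0$ (all gaps $\Delta_a>0$ for $a\neq a^*$), while any vector with a vanishing coordinate forces the corresponding term, hence the whole minimum, to $0$, the maximizer must be interior: $w_a>0$ for every $a$. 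I then claim that at the maximizer all the terms $\frac{w_{a^*}w_a}{w_{a^*}+w_a}\Delta_a^2$, $a\neq a^*$, share a common value $c_0$. This follows from an exchange argument: if some arm $c\neq a^*$ had a term strictly above the minimum, moving an infinitesimal mass from $w_c$ to $w_{a^*}$ would increase every term achieving the minimum (each is increasing in $w_{a^*}$) while keeping $c$'s term above the minimum for small enough transfers, strictly raising $g$ and contradicting optimality.

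Next I would extract the algebraic relations. Setting $r:=Tw_{a^*}/2$, and recalling $g(\vmu,\vw)=T^{-1}$ by~\eqref{eq:link_T_g} so that $c_0=2g(\vmu,\vw)=2/T=w_{a^*}/r$, each equality $\frac{w_{a^*}w_a}{w_{a^*}+w_a}\Delta_a^2=w_{a^*}/r$ rearranges, after dividing by $w_{a^*}$ and clearing denominators, into $w_a(r\Delta_a^2-1)=w_{a^*}$, which is~\eqref{eq:def_wa}; this forces $r\Delta_a^2>1$ for all $a\neq a^*$, i.e. $r>1/\Delta_{\min}^2$, so that $\phi_{\vmu}$ is defined at $r$. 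Summing $w_{a^*}+\sum_{a\neq a^*}w_a=1$ and substituting~\eqref{eq:def_wa} yields~\eqref{eq:link_r_w1}, while $r=Tw_{a^*}/2$ is exactly~\eqref{eq:link_r_T}. A direct computation with $w_a=w_{a^*}/(r\Delta_a^2-1)$ also shows that each term equals $w_{a^*}/r$, hence $g(\vmu,\vw)=w_{a^*}/(2r)$.

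It then remains to identify $r$ as the root of $\phi_{\vmu}$. By the previous steps the maximizer lies in the one-parameter family $\vw(\rho)$, $\rho>1/\Delta_{\min}^2$, defined by $w_a(\rho)=w_{a^*}(\rho)/(\rho\Delta_a^2-1)$ and normalization, along which $g(\vmu,\vw(\rho))=w_{a^*}(\rho)/(2\rho)=\bigl[2\rho\bigl(1+\sum_{a\neq a^*}(\rho\Delta_a^2-1)^{-1}\bigr)\bigr]^{-1}$. Differentiating the denominator $D(\rho)$ and using $\frac{d}{d\rho}\frac{\rho}{\rho\Delta_a^2-1}=-(\rho\Delta_a^2-1)^{-2}$ gives $D'(\rho)=-2\phi_{\vmu}(\rho)$; since $\phi_{\vmu}$ is strictly decreasing (Lemma~\ref{lem:convexity_phi}) with a unique root $r_0$, the denominator $D$ is strictly decreasing then increasing, so $\rho\mapsto g(\vmu,\vw(\rho))$ has its unique maximizer at $r_0$. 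As $\vw$ attains the global supremum and belongs to this family, its parameter $r$ must equal $r_0$, i.e. $\phi_{\vmu}(r)=0$. Finally, injecting $\phi_{\vmu}(r)=0$, equivalently $\sum_{a\neq a^*}(r\Delta_a^2-1)^{-2}=1$, into $\sum_{a\neq a^*}w_a^2=w_{a^*}^2\sum_{a\neq a^*}(r\Delta_a^2-1)^{-2}$ yields~\eqref{eq:w1=norm2_wa}.

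The main obstacle is the first step: rigorously establishing interiority and, above all, the equalization of all pairwise terms at the optimum, since this is precisely what collapses the multidimensional optimization onto the scalar parameter $r$. The exchange argument must be handled with care, ensuring that the mass transfer strictly increases the running minimum for all sufficiently small transfers; once equalization is in hand, the remaining steps are routine algebra and a one-variable derivative computation.
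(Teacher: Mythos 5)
Your proof is correct, and it takes a genuinely different route from the paper's. The paper performs a nested optimization: it fixes the weight $v_{a^*}$ of the best arm, asserts (rather briskly) that the inner maximum over the remaining coordinates is attained when all the quantities $\frac{v_{a^*}v_a}{v_{a^*}+v_a}\Delta_a^2$ are equal to a common value $C$, encodes the normalization constraint in an implicit relation $\Phi(v_{a^*},C)=0$, and then invokes the implicit function theorem to differentiate $C(v_{a^*})$; the root condition $\phi_{\vmu}(r)=0$ emerges as the first-order condition $C'(w_{a^*})=0$ at an interior maximum, interiority being granted because $C$ vanishes at the endpoints $v_{a^*}\to 0$ and $v_{a^*}\to 1$. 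You instead establish equalization of \emph{all} pairwise terms at the \emph{global} maximizer directly, via interiority plus an exchange argument — and that argument is sound: since arm $c$'s term lies strictly above the minimum, the minimum is attained among arms other than $c$, each such term strictly increases with $w_{a^*}$ (the map $x\mapsto \frac{xy}{x+y}$ has derivative $\frac{y^2}{(x+y)^2}>0$), and $c$'s term remains above the old minimum for small enough transfers, so the minimum strictly increases, contradicting optimality. You then collapse the problem onto the one-parameter family $\vw(\rho)$ and maximize the explicit expression $1/D(\rho)$ using the clean identity $D'(\rho)=-2\phi_{\vmu}(\rho)$ together with Lemma~\ref{lem:convexity_phi}. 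What your approach buys: it avoids the implicit function theorem entirely, it supplies a rigorous proof of the equalization step that the paper only asserts, and it yields uniqueness of the maximizer as a by-product (any maximizer must lie in the family, with parameter forced to be the unique root of $\phi_{\vmu}$). What the paper's approach buys: treating $v_{a^*}$ as an outer variable makes the definition $r=w_{a^*}/C(w_{a^*})$ transparent as a critical-point condition and gives smoothness of the value function along the way. One point worth making explicit in your write-up: when you set $r=Tw_{a^*}/2$ and write $c_0=w_{a^*}/r$, you are implicitly using $T<+\infty$, i.e.\ $g(\vmu,\vw)>0$; this is exactly what your interiority observation (uniform weights already give a positive value) provides, so the logic is complete but the dependence deserves a sentence.
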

Recall that in the case of $2$ arms, $\vw(\vmu) = (0.5, 0.5)$. Besides, the monotonicity of the optimal weights with respect to the gaps  follows from Equation~\eqref{eq:def_wa}.
\begin{corollary} \label{cor:weights_order} Assume that $K \geq 3$. 
Then
	\[ \forall a, b \in \Segent{K}, \quad \mu_a > \mu_b \quad \Longrightarrow \quad w_a > w_b \;. \]
\end{corollary}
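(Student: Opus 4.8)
The plan is to reduce everything to the explicit formula for the weights in Equation~\eqref{eq:def_wa} together with the defining constraint $\phi_{\vmu}(r) = 0$. First I would restate the hypothesis in terms of gaps: since $\Delta_c = \mu_* - \mu_c$, the condition $\mu_a > \mu_b$ is equivalent to $\Delta_a < \Delta_b$. Because $a^*$ is the unique arm with $\Delta_{a^*} = 0$, the strict inequality $\Delta_a < \Delta_b$ forces $b \neq a^*$, so there are only two configurations left to treat: either both $a$ and $b$ are suboptimal, or $a = a^*$.

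In the case $a \neq a^*$ and $b \neq a^*$, I would apply Equation~\eqref{eq:def_wa} directly, writing $w_a = w_{a^*}/(r\Delta_a^2 - 1)$ and $w_b = w_{a^*}/(r\Delta_b^2 - 1)$. Recall that $r$ lies in the domain of $\phi_{\vmu}$, so $r > 1/\Delta_{\min}^2$ and hence $r\Delta_c^2 - 1 > 0$ for every suboptimal arm $c$ (as $\Delta_c \geq \Delta_{\min}$). The map $x \mapsto 1/(rx^2 - 1)$ is strictly decreasing on the relevant range, so $\Delta_a < \Delta_b$ together with $w_{a^*} > 0$ yields $w_a > w_b$ immediately.

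The main step, and the only place where the hypothesis $K \geq 3$ enters, is the case $a = a^*$, where I must show $w_{a^*} > w_b$. From Equation~\eqref{eq:def_wa} we have $w_{a^*} = w_b\,(r\Delta_b^2 - 1)$, so it suffices to prove $r\Delta_b^2 - 1 > 1$. Here I would invoke that $r$ is the root of $\phi_{\vmu}$, namely $\sum_{c \neq a^*} (r\Delta_c^2 - 1)^{-2} = 1$. When $K \geq 3$ this sum contains at least two strictly positive terms, so each individual term is strictly less than $1$; in particular $(r\Delta_b^2 - 1)^{-2} < 1$, and since $r\Delta_b^2 - 1 > 0$ this gives $r\Delta_b^2 - 1 > 1$, as required. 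Combining the two cases completes the argument, and it is precisely this last estimate that relies on $K \geq 3$: for $K = 2$ the sum has a single term equal to $1$, forcing $r\Delta_b^2 - 1 = 1$ and $w_{a^*} = w_b = 1/2$, which is why the strict inequality cannot hold there.
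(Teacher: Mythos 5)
Your proof is correct and takes essentially the same route as the paper: the case of two suboptimal arms is handled identically via Equation~\eqref{eq:def_wa}, and your argument for $a = a^*$ --- deducing $r\Delta_b^2 - 1 > 1$ from the fact that $\phi_{\vmu}(r) = 0$ is a sum of at least two positive terms when $K \geq 3$ --- is just an unrolled form of the paper's appeal to Equation~\eqref{eq:w1=norm2_wa}, since that identity is itself obtained by substituting Equation~\eqref{eq:def_wa} into $\phi_{\vmu}(r) = 0$. Your closing remark on $K = 2$, where the single term forces $w_{a^*} = w_b = 1/2$, likewise matches the paper's observation that $\vw(\vmu) = (0.5, 0.5)$ in that case.
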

Equation~\eqref{eq:def_wa} also implies that
\[ \forall a, b \neq a^*, \quad \frac{w_a}{w_b} = \frac{\Delta_b^2 - 1/r}{\Delta_a^2 - 1/r} \;. \]
Intuitively, it requires about $\Delta_a^2$ samplings of arms $a^*$ and $a$ before being able to distinguish them, so that one could expect $\frac{w_a}{w_b}$ to be $\frac{\Delta_b^2}{\Delta_a^2}$. This would be the case if the comparisons between arms were independent. In our problem, sampling the best arm benefits the comparison with all arms, so that it is worth sampling the optimal arm a little more than any single comparison would require, and hence each sub-optimal arm a little less. As a result, the ratio $\frac{w_a}{w_b}$ is closer to $1$, and the factor can be seen as a ``discount" on each squared gap for sharing the comparisons. We now derive other important consequences of Proposition~\ref{prop:links_r}.

\subsection{Bounds and Computation of the Problem Characteristics}	\label{subsec:bounds_computation}

By Proposition~\ref{prop:links_r}, it suffices to compute $r$ to obtain the values of both $T$ and $\vw$. As $\phi_{\vmu}$ is a strictly convex and strictly decreasing function, Newton's iterates initialized with a value $r_0 < r$ converge to $r$ from below  at quadratic speed. The procedure is summarized in Algorithm~\ref{algo:Newton_r} of Appendix~\ref{appendixC3}. The number of correct digits roughly doubles at every step, which implies that a few iterations are sufficient to guarantee machine precision. The cost of the algorithm can hence be considered proportional to that of evaluating $\phi_{\vmu}(r)$, which is linear in the number of arms.

It remains to show that it is possible to find $r_0<r$, and possibly close to $r$. The next proposition offers such a lower bound as simple functions of the gaps. This also yields tight bounds on the optimal weight vector $\vw$ and the characteristic time $T$.

\begin{proposition} \label{prop:bounds} 
	Denoting by $\overline{\Delta^2} = \frac{1}{K-1} \sum_{a \neq a^*} \Delta_a^2$ the average squared gap, 
	\begin{align}
		\max\left(\frac{2}{\Delta_{\min}^2} , \frac{1+\sqrt{K-1}}{\overline{\Delta^2}}\right)                      & \leq r \leq \frac{1+\sqrt{K-1} }{\Delta_{\min}^2}\;,	\label{eq:bounds_r}                   \\
		\frac{1}{1+\sqrt{K-1}}                                                                                     & \leq w_{\max} \leq \frac{1}{2}\;,	\label{eq:bounds_w1}                                   \\
		\max\left(\frac{8}{\Delta_{\min}^2}, 4 \frac{1+\sqrt{K-1}}{\overline{\Delta^2}} \right) & \leq T \leq 2 \frac{\big(1+\sqrt{K-1}\big)^2 }{\Delta_{\min}^2}	\label{eq:bounds_T} \;.
	\end{align}
\end{proposition}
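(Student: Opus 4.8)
The plan is to derive all of \eqref{eq:bounds_r}--\eqref{eq:bounds_T} from the closed-form relations of Proposition~\ref{prop:links_r}, in the order $w_{\max}$, then $r$, then $T$, since the bounds on $T$ will follow by combining the other two through $T = 2r/w_{a^*}$ (Equation~\eqref{eq:link_r_T}).

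First I would handle $w_{\max}$, noting that $w_{\max} = w_{a^*}$: for $K \geq 3$ this is Corollary~\ref{cor:weights_order} applied to the optimal arm, and for $K = 2$ we have $\vw = (1/2, 1/2)$. I would then combine the identity $w_{a^*}^2 = \sum_{a \neq a^*} w_a^2$ of Equation~\eqref{eq:w1=norm2_wa} with the simplex constraint $\sum_{a \neq a^*} w_a = 1 - w_{a^*}$. The elementary bound $\sum_{a\neq a^*} w_a^2 \leq \left(\sum_{a \neq a^*} w_a\right)^2$ gives $w_{a^*} \leq 1 - w_{a^*}$, hence $w_{\max} \leq 1/2$; Cauchy--Schwarz in the form $\left(\sum_{a\neq a^*} w_a\right)^2 \leq (K-1)\sum_{a \neq a^*} w_a^2$ gives $(1-w_{a^*})^2 \leq (K-1)\,w_{a^*}^2$, hence $w_{\max} \geq 1/(1+\sqrt{K-1})$. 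This establishes \eqref{eq:bounds_w1}.

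Next I would treat the bounds on $r$ in \eqref{eq:bounds_r}, all read off the defining equation $\phi_{\vmu}(r) = 0$, that is $\sum_{a \neq a^*} (r\Delta_a^2 - 1)^{-2} = 1$. Writing $x_a = r\Delta_a^2 - 1 > 0$, each summand is at most $1$, so $x_a \geq 1$ and $r \geq 2/\Delta_a^2$ for every $a \neq a^*$; taking the smallest gap yields $r \geq 2/\Delta_{\min}^2$. For the upper bound I observe that $x_a$ is increasing in $\Delta_a$, so $\min_a x_a$ is attained at $\Delta_{\min}$; since the largest of the $K-1$ positive numbers $x_a^{-2}$ summing to $1$ is at least $1/(K-1)$, the corresponding (smallest) $x_a$ satisfies $x_a \leq \sqrt{K-1}$, i.e. $r\Delta_{\min}^2 - 1 \leq \sqrt{K-1}$, which is exactly the claimed upper bound. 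The remaining lower bound, involving $\overline{\Delta^2}$, is the delicate step: I would apply Hölder's inequality with exponents $3/2$ and $3$ to $K-1 = \sum_a x_a^{2/3} x_a^{-2/3} \leq \left(\sum_a x_a\right)^{2/3}\left(\sum_a x_a^{-2}\right)^{1/3}$, which, using $\sum_a x_a^{-2} = 1$, gives $\sum_a x_a \geq (K-1)^{3/2}$. Since $\sum_{a\neq a^*} x_a = r(K-1)\overline{\Delta^2} - (K-1)$, dividing by $K-1$ and rearranging yields $r\,\overline{\Delta^2} \geq 1 + \sqrt{K-1}$.

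Finally I would deduce \eqref{eq:bounds_T} from $T = 2r/w_{a^*}$. The upper bound pairs the largest admissible $r$ with the smallest value of $w_{a^*} = w_{\max}$, giving $T \leq 2 \cdot \frac{1+\sqrt{K-1}}{\Delta_{\min}^2} \cdot (1+\sqrt{K-1})$; for the lower bound, $w_{a^*} \leq 1/2$ gives $T \geq 4r$, and substituting the two lower bounds on $r$ produces $T \geq \max\!\left(8/\Delta_{\min}^2,\, 4(1+\sqrt{K-1})/\overline{\Delta^2}\right)$. I expect the main obstacle to be the Hölder step for the average-gap lower bound on $r$ (selecting the exponents $3/2,3$ and recognizing that equality holds when all $x_a$ are equal); the remaining inequalities are direct consequences of the root equation and the norm identity \eqref{eq:w1=norm2_wa} together with elementary convexity and Cauchy--Schwarz arguments.
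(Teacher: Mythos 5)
Your proof is correct, and its overall skeleton matches the paper's: everything is read off the root equation $\phi_{\vmu}(r)=0$ and the closed forms of Proposition~\ref{prop:links_r}, the bounds on $w_{\max}$ come from the identity~\eqref{eq:w1=norm2_wa} together with Cauchy--Schwarz (your $\sum_{a\neq a^*} w_a^2 \leq (\sum_{a\neq a^*} w_a)^2$ for the upper bound is just a rephrasing of the paper's $q_a\leq 1 \Rightarrow \sum_a q_a \geq \sum_a q_a^2$), and the bounds on $T$ are obtained exactly as in the paper by pairing the $r$- and $w$-bounds through $T = 2r/w_{a^*}$. Where you genuinely deviate is in the two nontrivial bounds on $r$. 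For the upper bound, the paper evaluates the sign $\phi_{\vmu}\bigl((1+\sqrt{K-1})/\Delta_{\min}^2\bigr)\leq 0$ and invokes the monotonicity of $\phi_{\vmu}$ (Lemma~\ref{lem:convexity_phi}), whereas your maximal-term argument --- the largest of the $K-1$ numbers $x_a^{-2}$ summing to $1$ is at least $1/(K-1)$, and it sits at $\Delta_{\min}$ since $x_a$ increases with $\Delta_a$ --- reads $r\Delta_{\min}^2-1\leq\sqrt{K-1}$ directly off the root identity with no appeal to monotonicity. For the average-gap lower bound, the paper applies Jensen to the convex map $x\mapsto (rx-1)^{-2}$ and again uses the decrease of $\phi_{\vmu}$; your H\"older step with exponents $(3/2,3)$ is correct as written (taking $u_a=x_a^{2/3}$, $v_a=x_a^{-2/3}$ gives $K-1\leq\bigl(\sum_a x_a\bigr)^{2/3}\bigl(\sum_a x_a^{-2}\bigr)^{1/3}$, i.e.\ the power-mean inequality $M_1\geq M_{-2}$), and since $\sum_{a\neq a^*}x_a=(K-1)\bigl(r\,\overline{\Delta^2}-1\bigr)$ it yields $r\,\overline{\Delta^2}\geq 1+\sqrt{K-1}$ directly. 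What each route buys: yours is self-contained at the level of the root equation and never needs Lemma~\ref{lem:convexity_phi} for these bounds; the paper's sign-evaluation template is more flexible (any candidate value of $r$ can be tested the same way), and its strict-convexity form drives the equality-case discussion that follows the proof in Appendix~\ref{sec:proofProp4} --- though your H\"older equality case (all $x_a$ equal, i.e.\ all gaps equal) recovers exactly the same tightness characterization, as you note.
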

Note that all of these inequalities can be reached for certain parameters $\vmu$, as discussed in Appendix~\ref{sec:proofProp4} after the proof of Proposition~\ref{prop:bounds}.

\subsection{Monotonicity of the $\min$-$\max$ Problem}	\label{subsec:monotonicity_minmaxpb}

We now show monotonicity results of the mappings $\vnu \mapsto T(\vnu)$ and $\vnu \mapsto \vw(\vnu)$ when moving arm(s). When $K = 2$, the optimization problem is simple and leads to $\vw(\vmu) = (0.5, 0.5)$ and $T(\vmu) = 8\Delta_2^2$, so that we assume in the remaining of this section that $K \geq 3$. 

Let $\vmu' \in \cG^*$ be another bandit problem sharing the same unique optimal arm $a^*$ as $\vmu$ and define $\vDelta'$, $\vw'$, $w_{\min}'$, $T'$ and $r'$ similarly to problem $\vmu$. %the gaps, optimal weight vector, characteristic time and root of $\phi_{\vmu'}$ relative to the parameter $\vmu'$. 
The three following lemmas, which are the key ingredients to prove Proposition~\ref{prop:optimistic_weights}, are shown in Appendix~\ref{appendixC4}. 
\begin{lemma} \label{lem:moving_one_suboptimal_arm} Assume that $\Delta'_b > \Delta_b$ for a fixed $b \neq a^*$ while $\Delta'_a = \Delta_a$ for all %	$a \notin \{a^*, b\}$
	$a\neq b$. Then
	\begin{enumerate}[itemsep=0cm, topsep=0cm]
		\item \label{lem:moving_one_suboptimal_arm1} $w'_b < w_b$,
		\item \label{lem:moving_one_suboptimal_arm2} $w'_a > w_a$ for any $a \notin \{a^*, b\}$,
		\item \label{lem:moving_one_suboptimal_arm3} $T' < T$.
	\end{enumerate}
\end{lemma}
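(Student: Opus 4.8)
The plan is to work throughout with the description of $\vw$ and $T$ via the root $r$ of $\phi_{\vmu}$ furnished by Proposition~\ref{prop:links_r}. Write $\rho_a=1/(r\Delta_a^2-1)$ for $a\neq a^*$ and $\Sigma=\sum_{a\neq a^*}\rho_a$, so that $\phi_{\vmu}(r)=0$ reads $\sum_{a\neq a^*}\rho_a^2=1$ and Proposition~\ref{prop:links_r} gives $w_{a^*}=1/(1+\Sigma)$, $w_a=\rho_a/(1+\Sigma)$ and $T=2r(1+\Sigma)$; primes denote the corresponding quantities for $\vmu'$. I would first show $r'<r$. Since only arm $b$'s gap changes and $\Delta_b'>\Delta_b$, evaluating $\phi_{\vmu'}$ at the old root gives $\phi_{\vmu'}(r)=\phi_{\vmu}(r)+\bigl[(r(\Delta_b')^2-1)^{-2}-(r\Delta_b^2-1)^{-2}\bigr]<0$, where $r$ belongs to the domain of $\phi_{\vmu'}$ because $\Delta_{\min}'\ge\Delta_{\min}$. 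As $\phi_{\vmu'}$ is strictly decreasing (Lemma~\ref{lem:convexity_phi}) and vanishes at $r'$, this forces $r'<r$. Hence $\rho_a'>\rho_a$ for every $a\notin\{a^*,b\}$ (same gap, smaller $r$), and then the two normalisations $\sum_{a\neq a^*}(\rho_a')^2=1=\sum_{a\neq a^*}\rho_a^2$ give $(\rho_b')^2<\rho_b^2$, i.e.\ $\rho_b'<\rho_b$.

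Item~\ref{lem:moving_one_suboptimal_arm1} then follows quickly: cross-multiplying, $w_b'<w_b$ is equivalent to $(\rho_b'-\rho_b)+\sum_{a\neq a^*}(\rho_b'\rho_a-\rho_b\rho_a')<0$, and this holds termwise, the first summand being negative and each $\rho_b'\rho_a-\rho_b\rho_a'$ ($a\neq b$) being negative because $\rho_b'<\rho_b$ while $\rho_a'>\rho_a$. For item~\ref{lem:moving_one_suboptimal_arm3} I would use an envelope argument. By the equalizer property behind Proposition~\ref{prop:links_r}, the terms $\tfrac12\frac{w_{a^*}w_a}{w_{a^*}+w_a}\Delta_a^2$ are all equal to $T^{-1}$; evaluating $g(\vmu',\vw)$ (cf.~\eqref{eq:def_g}) at these same weights, every term but $b$ is unchanged and the $b$-term only grows, so the binding minimum is still $T^{-1}$ and $g(\vmu',\vw)=T^{-1}$. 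But $\vw$ fails to equalise the terms for $\vmu'$, whereas $\vw'$ does, so $\vw\neq\vw'$; uniqueness of the maximiser then yields $T'^{-1}=g(\vmu',\vw')>g(\vmu',\vw)=T^{-1}$, i.e.\ $T'<T$.

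The delicate item is~\ref{lem:moving_one_suboptimal_arm2}, and it is the main obstacle: $w_{a^*}$ is \emph{not} monotone in $\Delta_b$ (raising the smallest gap can lower $w_{a^*}$), so $w_a'>w_a$ cannot be read off from the two factors of $w_a=\rho_a w_{a^*}$, and the termwise trick used for item~\ref{lem:moving_one_suboptimal_arm1} fails because competing terms of both signs appear. Here I would argue differentially. Setting $x=\Delta_b^2$ and viewing $r,\vw,T$ as smooth functions of $x$ through the implicit function theorem applied to $\phi$ (the path stays in $\cG^*$, and $\sum_{a\neq a^*}\rho_a^2=1$ forces $\rho_a\le1$, whence $r\Delta_a^2\ge2$ and no denominator degenerates), it suffices to determine the sign of each derivative and integrate from $\Delta_b^2$ to $(\Delta_b')^2$. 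Implicit differentiation gives $\mathrm{d}r/\mathrm{d}x=-r\rho_b^3/D<0$ with $D=\sum_{a\neq a^*}\Delta_a^2\rho_a^3$, and the key identity $\sum_{a\neq a^*}\Delta_a^2\rho_a^2=(1+\Sigma)/r$ — obtained from $\Delta_a^2=(1+\rho_a)/(r\rho_a)$ together with $\sum\rho_a^2=1$ — collapses the numerator of $\mathrm{d}w_a/\mathrm{d}x$ to a positive multiple of $\rho_a\rho_b(1+\Sigma)+1+\sum_{c\neq a^*}\rho_c^3>0$ for $a\notin\{a^*,b\}$, so $w_a$ strictly increases and $w_a'>w_a$.

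The same identity makes the whole picture uniform, since it yields the clean expressions $\mathrm{d}T/\mathrm{d}x=-2r^2\rho_b^2<0$ and, using $\rho_b\le1$, $\mathrm{d}w_b/\mathrm{d}x<0$, thereby reproving items~\ref{lem:moving_one_suboptimal_arm3} and~\ref{lem:moving_one_suboptimal_arm1} along the same differential route. I expect this identity-driven cancellation in $\mathrm{d}w_a/\mathrm{d}x$ to be the crux of the argument: without the substitution $\Delta_a^2=(1+\rho_a)/(r\rho_a)$ and the normalisation $\sum\rho_a^2=1$, the sign of the derivative is genuinely ambiguous.
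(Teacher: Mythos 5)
Your proof is correct, and its skeleton matches the paper's: item~\ref{lem:moving_one_suboptimal_arm1} rests on the same root comparison $r'<r$ obtained from $\phi_{\vmu'}(r)<0=\phi_{\vmu}(r)$ (your cross-multiplied termwise inequality is exactly the paper's ratio argument $w'_a/w'_b>w_a/w_b$ summed over $a\neq b$, in different clothing), and item~\ref{lem:moving_one_suboptimal_arm2} is handled, as in the paper, by the implicit function theorem applied to $\phi$ with the formula $\mathrm{d}r/\mathrm{d}x=-r\rho_b^3/\sum_c \Delta_c^2\rho_c^3$. The differences are worth recording. For item~\ref{lem:moving_one_suboptimal_arm2}, the paper differentiates $F_a=1/w_a$ and uses only $\sum_c\rho_c^2=1$ to cancel a sum, ending with two terms whose signs are read off separately; you differentiate $w_a$ directly, and the substitution $r\Delta_a^2\rho_a=1+\rho_a$ (hence $\sum_c\Delta_c^2\rho_c^2=(1+\Sigma)/r$ and $r\sum_c\Delta_c^2\rho_c^3=1+\sum_c\rho_c^3$) collapses the numerator to the manifestly positive $\frac{\rho_a\rho_b^2}{D}\bigl(\rho_a\rho_b(1+\Sigma)+1+\sum_c\rho_c^3\bigr)$ with $D=\sum_c\Delta_c^2\rho_c^3$ --- I checked this identity and it is exact; it also buys you the clean closed form $\mathrm{d}T/\mathrm{d}x=-2r^2\rho_b^2$, which the paper does not have and which unifies all three items. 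For item~\ref{lem:moving_one_suboptimal_arm3}, your orientation --- $T'^{-1}=g(\vmu',\vw')>g(\vmu',\vw)\geq g(\vmu,\vw)=T^{-1}$, with strictness from uniqueness of the maximizer of $g(\vmu',\cdot)$ and $\vw\neq\vw'$ --- is in fact the valid one: the paper's displayed chain passes through the step $\frac12\min_{a}\frac{w'_1w'_a}{w'_1+w'_a}\Delta_a^2>\frac12\min_{a}\frac{w_1w_a}{w_1+w_a}\Delta_a^2$, i.e.\ $g(\vmu,\vw')>g(\vmu,\vw)$, which contradicts $\vw$ being the unique maximizer of $g(\vmu,\cdot)$; the primes are evidently misplaced there, and the intended argument is the one you wrote (your exact evaluation $g(\vmu',\vw)=T^{-1}$ via the equalizer property behind Proposition~\ref{prop:links_r} is more than needed, since $g(\vmu',\vw)\geq g(\vmu,\vw)$ already suffices, but it is correct). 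One small point of hygiene: your deduction $(\rho'_b)^2<\rho_b^2$ from the two normalizations requires an arm outside $\{a^*,b\}$, i.e.\ $K\geq3$ --- this is the section's standing hypothesis, which the paper invokes explicitly at this exact step and you should too.
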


\begin{lemma} \label{lem:moving_optimal_arm} Assume that $\Delta'_a = \Delta_a + d$ for every $a \neq a^*$ and some $d > 0$. Then $w'_{\min} \geq w_{\min}$, with strict inequality whenever $\Delta_a \neq \Delta_b$ for some $a, b \neq a^*$. \end{lemma}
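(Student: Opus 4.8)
The plan is to read the hypothesis as pushing the optimal arm upward: since $\Delta_a + d = (\mu_* + d) - \mu_a$, passing from $\vmu$ to $\vmu'$ amounts to raising $\mu_*$ by $d$ while freezing the other means. I would interpolate along the path $\Delta_a(s) = \Delta_a + s$, $s \in [0, d]$ (all gaps stay positive, their order is preserved, and $K \geq 3$), denote by a dot the derivative in $s$, and show that $s \mapsto w_{\min}(s)$ is nondecreasing, strictly so unless all sub-optimal gaps coincide. First I would locate the minimizing coordinate: writing $u_a = r\Delta_a(s)^2 - 1$, Proposition~\ref{prop:links_r} gives $w_a = w_{a^*}/u_a$, and the bound $r \geq 2/\Delta_{\min}^2$ of Proposition~\ref{prop:bounds} yields $u_a \geq 1$, hence $w_a \leq w_{a^*}$; since $u_a$ increases with the gap, the smallest weight is attained at a largest-gap arm $c$, so $w_{\min}(s) = w_{a^*}(s)/u_c(s)$ with $c$ fixed along the path.

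Next I would differentiate. The map $r(s)$ is $C^1$ by the implicit function theorem applied to $\sum_{a \neq a^*} u_a^{-2} = 1$ (the $r$-derivative $-2\sum_a \Delta_a(s)^2 / u_a^3$ is nonzero), and one reads off $\dot r < 0$. Setting $h_a := \dot u_a / u_a$, a direct computation from $w_{a^*} = (1 + \sum_{a \neq a^*} u_a^{-1})^{-1}$ gives
\[ \frac{d}{ds}\log w_{\min}(s) = \sum_{a \neq a^*} w_a h_a - h_c , \]
so the target reduces to $\sum_{a \neq a^*} w_a h_a \geq h_c$. Differentiating the constraint $\sum_{a \neq a^*} u_a^{-2} = 1$ and using $u_a^{-2} = w_a^2 / w_{a^*}^2$ yields the side relation
\[ \sum_{a \neq a^*} w_a^2 h_a = 0 . \]

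Two observations then finish the reduction. The side relation, having positive coefficients $w_a^2$, forces $\min_a h_a \leq 0$. Moreover I claim $h_a$ is strictly decreasing in the gap $\Delta_a(s)$; granting this, the largest-gap arm $c$ realizes $h_c = \min_a h_a \leq 0$, and therefore, using $h_a \geq h_c$ together with $\sum_{a \neq a^*} w_a = 1 - w_{a^*} \in (0, 1)$,
\[ \sum_{a \neq a^*} w_a h_a \geq h_c (1 - w_{a^*}) \geq h_c , \]
the last inequality holding because $h_c \leq 0$. This is exactly the desired inequality, so $w_{\min}$ is nondecreasing. For strictness: when the gaps are not all equal, monotonicity of $h$ makes the $h_a$ not all equal, so the side relation forces $h_c < 0$, whence $h_c(1 - w_{a^*}) > h_c$ and $\frac{d}{ds}\log w_{\min} > 0$ throughout; integrating over $[0, d]$ gives $w'_{\min} > w_{\min}$. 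When all sub-optimal gaps coincide the shift multiplies $\vDelta$ by a positive scalar, under which $\vw$ is invariant, so $w'_{\min} = w_{\min}$.

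The main obstacle is the monotonicity of $h_a$. With $\kappa := -\dot r > 0$ one has $h(\Delta) = (-\kappa \Delta^2 + 2r\Delta)/(r\Delta^2 - 1)$ and, after simplification, $h'(\Delta)$ has the sign of $-(r^2 \Delta^2 - \kappa \Delta + r)$; it thus remains to prove the quadratic $r^2 \Delta^2 - \kappa \Delta + r$ is everywhere positive, i.e. its discriminant $\kappa^2 - 4r^3$ is negative. Substituting the implicit-function value $\kappa = 2r\,(\sum_a \Delta_a(s)/u_a^3)/(\sum_a \Delta_a(s)^2/u_a^3)$, the inequality $\kappa^2 < 4r^3$ is equivalent to $(\sum_a \Delta_a(s)/u_a^3)^2 < r\,(\sum_a \Delta_a(s)^2/u_a^3)^2$, which holds because every summand of $\sqrt{r}\sum_a \Delta_a(s)^2/u_a^3 - \sum_a \Delta_a(s)/u_a^3 = \sum_a \frac{\Delta_a(s)}{u_a^3}(\sqrt r \Delta_a(s) - 1)$ is positive (indeed $u_a > 0$ forces $\sqrt r \Delta_a(s) > 1$). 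I expect this discriminant estimate, and the bookkeeping that propagates strictness from ``gaps not all equal'' through the two displayed inequalities, to be the only delicate points; everything else is the elementary manipulation sketched above.
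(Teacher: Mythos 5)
Your proof is correct, and it takes a genuinely different route from the paper's. The paper argues by a discrete reduction: it rescales $\vmu'$ by $\kappa = \Delta_{\max}/(\Delta_{\max}+d)$, which leaves the weight vector unchanged by the scaling invariance of Lemma~\ref{lem:scaling}, restores the maximal gap to $\Delta_{\max}$, and weakly increases every other gap (by monotonicity of $x \mapsto x/(x+d)$); it then applies Point~\ref{lem:moving_one_suboptimal_arm2} of Lemma~\ref{lem:moving_one_suboptimal_arm} arm by arm to conclude that the weight of a worst arm --- which is $w_{\min}$ by Corollary~\ref{cor:weights_order} --- can only increase, with strictness exactly when some gap lies strictly below $\Delta_{\max}$. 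You instead interpolate continuously along $\Delta_a(s) = \Delta_a + s$ and differentiate the characterization of Proposition~\ref{prop:links_r} directly. I checked your computations and they all hold: $\frac{d}{ds}\log w_{\min} = \sum_{a \neq a^*} w_a h_a - h_c$ with $h_a = \dot u_a/u_a$; the side relation $\sum_{a \neq a^*} w_a^2 h_a = 0$; $\dot r = -2r S_1/S_2 < 0$ with $S_i = \sum_{a \neq a^*} \Delta_a^i u_a^{-3}$; the sign of $h'(\Delta)$ being that of $-(r^2\Delta^2 - \kappa\Delta + r)$; and the discriminant bound $\kappa^2 < 4r^3$, which indeed reduces to $\sum_{a \neq a^*} \Delta_a u_a^{-3}(\sqrt{r}\,\Delta_a - 1) > 0$ and holds since $r\Delta_a^2 > 1$ for every suboptimal arm. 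The strictness bookkeeping is also sound: when the gaps are not all equal, strict decrease of $h$ in the gap plus the side relation force $h_c = \min_a h_a < 0$, whence $\frac{d}{ds}\log w_{\min} \geq -h_c w_{a^*} > 0$ along the whole path, while the equal-gaps case is pure scaling (Lemma~\ref{lem:scaling}), exactly as in the paper. As for what each approach buys: the paper's proof is much shorter because the delicate implicit-differentiation work was already paid for in the proof of Lemma~\ref{lem:moving_one_suboptimal_arm}, and the transformation it uses is geometrically transparent; yours is self-contained modulo Propositions~\ref{prop:links_r} and~\ref{prop:bounds} and Corollary~\ref{cor:weights_order}, avoids Lemma~\ref{lem:moving_one_suboptimal_arm} entirely, and yields a quantitative lower bound on the derivative of $\log w_{\min}$ rather than a bare comparison --- at the price of the discriminant estimate, which is the one genuinely new delicate step, and which you handle correctly.
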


%\newpage
\begin{lemma} \label{lem:moving_worst_arms} Let $B = \argmin_{a \in \Segent{K}} \mu_a$ (resp. $B'= \argmin_{a \in \Segent{K}} \mu'_a$) be the set of the worst arms of $\vmu$ (resp. $\vmu'$) and assume that $B \subset B'$ and $\Delta'_{\max} < \Delta_{\max}$, while $\Delta'_a = \Delta_a$ for all $a \notin B'$. Then $w'_{\min} \geq w_{\min}$.
\end{lemma}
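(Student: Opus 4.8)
The plan is to reduce the statement, via a scaling argument, to a sequence of single-arm moves covered by Lemma~\ref{lem:moving_one_suboptimal_arm}. Two ingredients drive the proof. First, since $K\geq 3$, Corollary~\ref{cor:weights_order} guarantees that $w_{\min}$ is attained exactly at the worst arm(s): for $\vmu$ these are the arms of $B$ sitting at gap $\Delta_{\max}$, and for $\vmu'$ the arms of $B'$ sitting at gap $\Delta'_{\max}$. Second, I will use that $\vw(\vnu)$ depends on $\vnu$ only through the squared gaps $(\Delta_a(\vnu)^2)_{a\neq a^*}$ and is invariant under multiplying all of them by a common factor $\kappa>0$: this is immediate from Proposition~\ref{prop:links_r}, since replacing each $\Delta_a^2$ by $\kappa\Delta_a^2$ and $r$ by $r/\kappa$ leaves every product $r\Delta_a^2$ unchanged, hence leaves Equations~\eqref{eq:link_r_w1} and~\eqref{eq:def_wa} unchanged.

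The key idea is that lowering the worst arms is, up to this scale invariance, the \emph{same} operation as raising all the other arms, and the latter can be performed one arm at a time. Concretely, rescale $\vmu$ into a bandit $\tilde{\vmu}$ with the same weights by multiplying every squared gap by $(\Delta'_{\max}/\Delta_{\max})^2<1$; then the worst arms of $\tilde{\vmu}$ lie exactly at gap $\Delta'_{\max}$, matching those of $\vmu'$. I then compare $\tilde{\vmu}$ and $\vmu'$ arm by arm. The arms of $B$ are unchanged (both at $\Delta'_{\max}$); each arm $a\in B'\setminus B$ has squared gap $\Delta_a^2(\Delta'_{\max}/\Delta_{\max})^2<(\Delta'_{\max})^2$ in $\tilde{\vmu}$ and $(\Delta'_{\max})^2$ in $\vmu'$, hence is raised up to the top level; and each arm $c\notin B'$ has squared gap $\Delta_c^2(\Delta'_{\max}/\Delta_{\max})^2<\Delta_c^2<(\Delta'_{\max})^2$ in $\tilde{\vmu}$ and $\Delta_c^2$ in $\vmu'$, hence is also raised while staying strictly below $\Delta'_{\max}$ (here I use that, by definition of $B'$, $\Delta_c<\Delta'_{\max}$ for every $c\notin B'$). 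Thus every arm whose gap changes between $\tilde{\vmu}$ and $\vmu'$ strictly increases it, and none of them ever exceeds the top level $\Delta'_{\max}$.

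I then pass from $\tilde{\vmu}$ to $\vmu'$ by raising these arms one at a time, in any order, applying Lemma~\ref{lem:moving_one_suboptimal_arm} at each step. At every step the raised arm remains strictly below $\Delta'_{\max}$ until the very end of its ascent, so it is never the unique worst arm; consequently the current worst arms — which always contain $B$, and progressively the arms of $B'\setminus B$ already lifted to $\Delta'_{\max}$ — are among the ``other'' arms in the lemma and therefore see their weight increase. Since by Corollary~\ref{cor:weights_order} these worst arms carry the minimal weight, $w_{\min}$ does not decrease along the whole chain, giving $w_{\min}(\vmu')\geq w_{\min}(\tilde{\vmu})=w_{\min}(\vmu)=w_{\min}$, which is the claim.

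The step I expect to be the crux is precisely this reduction: a naive attempt to lower the worst block one arm at a time fails, because a single lowered arm would dive below the others and Lemma~\ref{lem:moving_one_suboptimal_arm} would then push the weight of the \emph{new} worst arm in the wrong direction. Trading the lowering for a raising via scale invariance is what makes every intermediate move admissible and keeps the minimal-weight arms fixed as the beneficiaries of Lemma~\ref{lem:moving_one_suboptimal_arm}. The only remaining care concerns the instants at which a raised arm reaches the top and joins the worst set: there one invokes the continuity of $\vnu\mapsto\vw(\vnu)$ so that $w_{\min}$ stays continuous and the monotonicity chains across these ties.
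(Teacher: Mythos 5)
Your proof is correct and follows essentially the same route as the paper's: a rescaling via the scale invariance of $\vw$ (Lemma~\ref{lem:scaling}) to align the gaps of the arms in $B$, followed by repeated applications of Point~\ref{lem:moving_one_suboptimal_arm2} of Lemma~\ref{lem:moving_one_suboptimal_arm} to the remaining arms, with $w_{\min}$ carried by the arms of $B$ throughout thanks to Corollary~\ref{cor:weights_order}. The only cosmetic difference is that you rescale $\vmu$ down rather than $\vmu'$ up, and your tie-handling at gap $\Delta'_{\max}$ needs no continuity argument since equal gaps force equal weights by Equation~\eqref{eq:def_wa}.
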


%Note that those lemmas . % when $K \geq 3$ (when $K=2$ the proposition is trivial). 

\subsection{Regularity of \texorpdfstring{$\vw$} ,, $T$ and $g$}		\label{subsec:regularity}

Lastly, we show explicit bounds on the regularity of $\vnu \mapsto \vw(\vnu)$ and $\vnu \mapsto T(\vnu)$. We keep the notations of the last section. 
\begin{theorem}\label{thm:regularity} 
	Assume that $(1-\varepsilon) \Delta_a^2 \leq {\Delta_a'}^{2} \leq (1+\varepsilon) \Delta_a^2$ for all $a \neq a^*$ and some $\varepsilon \in [0, 1/7]$. Then
	\begin{align*}
		(1-3\varepsilon) T                                   & \leq  T' \leq (1+6\varepsilon)T \;,       \\
		\forall a \in \Segent{K}, \quad (1-10\varepsilon)w_a & \leq   w'_a \leq (1+10\varepsilon)w_a \;.
	\end{align*}
\end{theorem}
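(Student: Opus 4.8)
The plan is to reduce the whole statement to the scalar $r=r(\vmu)$ of Proposition~\ref{prop:links_r} and to track how $r$, $T$ and $\vw$ respond to the perturbation of the squared gaps. Writing $x_a = r\Delta_a^2-1$ for $a\neq a^*$, Equations~\eqref{eq:def_wa} and~\eqref{eq:link_r_T} give $w_{a^*}=2r/T$ and
\[ w_a \;=\; \frac{w_{a^*}}{x_a} \;=\; \frac{2}{T\,(\Delta_a^2 - 1/r)}\qquad (a\neq a^*)\,. \]
This last identity is the key algebraic device: it encodes the coupled dependence of an off-optimal weight through only the two scalars $T$ and $1/r$, which is exactly what will make the constants come out. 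I would therefore bound $r'/r$ first, then $T'/T$, and finally read off the weights.

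For $r'/r$ I would use two properties of $\phi_{\vmu}$ from~\eqref{eq:def_phi}. It is homogeneous: scaling every squared gap by a common $c>0$ replaces each factor $r\Delta_a^2-1$ by $r(c\Delta_a^2)-1$, so the root is divided by $c$; in particular the gaps $(1\pm\varepsilon)\Delta_a^2$ have roots $r/(1\pm\varepsilon)$. Moreover, at fixed $r$ each summand $(r\Delta_a^2-1)^{-2}$ decreases when $\Delta_a^2$ grows, so $\phi_{\vmu}$ decreases under any increase of a squared gap; since $\phi_{\vmu}$ is also strictly decreasing in $r$ (Lemma~\ref{lem:convexity_phi}), its root is a decreasing function of each squared gap. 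Sandwiching ${\Delta_a'}^2\in[(1-\varepsilon)\Delta_a^2,(1+\varepsilon)\Delta_a^2]$ between the two uniform rescalings then yields $\frac{r}{1+\varepsilon}\le r'\le\frac{r}{1-\varepsilon}$, equivalently $\frac{1-\varepsilon}{r}\le\frac1{r'}\le\frac{1+\varepsilon}{r}$. I would also record the floor $x_a\ge 1$: by Proposition~\ref{prop:bounds}, $r\ge 2/\Delta_{\min}^2$, hence $r\Delta_a^2\ge r\Delta_{\min}^2\ge 2$ for every $a\neq a^*$.

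For $T'/T$ I would avoid differentiating the explicit formula and instead exploit the variational identity $T^{-1}=\max_{\vv}g(\vmu,\vv)=g(\vmu,\vw(\vmu))$ of~\eqref{eq:link_T_g} (and its analogue for $\vmu'$). Evaluating $g(\vmu',\cdot)$ at the weights $\vw(\vmu)$ and using $g(\vmu',\vv)=\tfrac12\min_{a\neq a^*}\frac{v_{a^*}v_a}{v_{a^*}+v_a}{\Delta_a'}^2$ with ${\Delta_a'}^2\ge(1-\varepsilon)\Delta_a^2$ gives $1/T'\ge(1-\varepsilon)/T$, i.e. $T'\le T/(1-\varepsilon)$; the symmetric comparison, plugging $\vw(\vmu')$ into $g(\vmu,\cdot)$ and using $\Delta_a^2\ge{\Delta_a'}^2/(1+\varepsilon)$, gives $T'\ge T/(1+\varepsilon)$. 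Hence $T/T'\in[1-\varepsilon,1+\varepsilon]$, and one checks $\frac1{1+\varepsilon}\ge1-\varepsilon\ge1-3\varepsilon$ and $\frac1{1-\varepsilon}\le1+6\varepsilon$ for $\varepsilon\le1/7$, which already yields the stated bounds on $T'$.

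Finally the weights. For $a=a^*$, $\frac{w_{a^*}'}{w_{a^*}}=\frac{r'}{r}\cdot\frac{T}{T'}\in[\frac{1-\varepsilon}{1+\varepsilon},\frac{1+\varepsilon}{1-\varepsilon}]\subseteq[1-10\varepsilon,1+10\varepsilon]$. For $a\neq a^*$ the clean formula gives $\frac{w_a'}{w_a}=\frac{T}{T'}\cdot\frac{\Delta_a^2-1/r}{{\Delta_a'}^2-1/r'}$. Substituting $\Delta_a^2=(x_a+1)/r$ and the extreme admissible values of ${\Delta_a'}^2$ and $1/r'$, the second factor is bounded above by $\frac{x_a}{(1-\varepsilon)x_a-2\varepsilon}$ and below by $\frac{x_a}{(1+\varepsilon)x_a+2\varepsilon}$; both are monotone in $x_a$ with extremes at $x_a=1$ (where $x_a\ge1$ also guarantees $(1-\varepsilon)x_a-2\varepsilon\ge1-3\varepsilon>0$), so the factor lies in $[\frac1{1+3\varepsilon},\frac1{1-3\varepsilon}]$. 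Multiplying by $T/T'\in[1-\varepsilon,1+\varepsilon]$ gives $\frac{w_a'}{w_a}\in[\frac{1-\varepsilon}{1+3\varepsilon},\frac{1+\varepsilon}{1-3\varepsilon}]\subseteq[1-10\varepsilon,1+10\varepsilon]$ for $\varepsilon\le1/7$. The main obstacle, and the reason for the generous constant $10$, is precisely this last step: a naive bound of $w_a'/w_a$ as the product of the $w_{a^*}$-ratio and the $x_a$-ratio double-counts the displacement of $r$ and is already too lossy at $\varepsilon=1/7$; routing the computation through the identity $w_a=2/\big(T(\Delta_a^2-1/r)\big)$ together with the floor $x_a\ge1$ is what keeps the constants under control.
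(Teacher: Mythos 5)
Your proof is correct, and while your first step coincides with the paper's, the rest follows a genuinely different route. The sandwich $r/(1+\varepsilon)\le r'\le r/(1-\varepsilon)$ is exactly the paper's opening move (the paper evaluates $\phi_{\vmu'}$ at $r/(1\pm\varepsilon)$ directly; your homogeneity-plus-monotonicity packaging is equivalent). From there the paper works through the optimal weight first: it expands $(w_1')^{-1}=1+\sum_{a\neq 1}(r'{\Delta'_a}^2-1)^{-1}$ around $r\Delta_a^2$, collapses the error terms using the identity $\sum_{a\neq1}(r\Delta_a^2-1)^{-2}=1$, obtains $(1-3\varepsilon)w_1^{-1}\le (w_1')^{-1}\le(1+5\varepsilon)w_1^{-1}$, and only then deduces the $T$ bounds from $T'=2r'/w_1'$ and the suboptimal weights from $w'_a=w'_1/(r'{\Delta'_a}^2-1)$. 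You instead get $T$ first and independently of the weights, by cross-evaluating the variational characterization $T^{-1}=g(\vmu,\vw(\vmu))=\max_{\vv\in\Sigma_K}g(\vmu,\vv)$ at $\vw(\vmu)$ and $\vw(\vmu')$ (legitimate here since the standing assumption of Section~\ref{subsec:monotonicity_minmaxpb} is that $\vmu'$ shares the best arm $a^*$, so Equation~\eqref{eq:def_g} applies to both); this is more elementary, would survive in settings where $\phi_{\vmu}$ has no closed form, and yields the strictly sharper $T/(1+\varepsilon)\le T'\le T/(1-\varepsilon)$. Your weight bounds then flow from the identity $w_a=2/\bigl(T(\Delta_a^2-1/r)\bigr)$ together with the floor $r\Delta_a^2\ge2$ (the same floor the paper invokes, there to bound $r\Delta_a^2/(r\Delta_a^2-1)\le2$), and your monotonicity analysis of $x_a\mapsto x_a/((1\mp\varepsilon)x_a\mp2\varepsilon)$ over $x_a\ge1$ checks out. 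A noteworthy bonus: your final ratios $\frac{1+\varepsilon}{1-3\varepsilon}$ (suboptimal arms) and $\frac{1+\varepsilon}{1-\varepsilon}$ (optimal arm) fit inside the $1\pm10\varepsilon$ envelope on all of $[0,1/7]$, whereas the last displayed inequality of the paper's proof, $\frac{1+\varepsilon}{(1-3\varepsilon)^2}\le 1+10\varepsilon$, actually fails for $\varepsilon\in(1/15,1/7]$ (it is equivalent to $90\varepsilon^2-51\varepsilon+3\ge0$, whose roots are $1/15$ and $1/2$); so your sharper intermediate bounds quietly repair a small endpoint slip in the paper's own chain.
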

Independently, we show the following property of $g$.

\begin{proposition} \label{prop:control_g}
	Let $\vv \in \Sigma_K$. Then:
	\[ g(\vmu', \vv) \geq \frac{(1-\eta)^2}{1+\eta} \big( g(\vmu, \vw(\vmu)) - \varepsilon/2 \big) \]
	where $\varepsilon = \norm{\vmu-\vmu'}_\infty$ and $\eta = \max_{a \in \Segent{K}} \frac{\abs{w_a(\vmu) - v_a}}{w_a(\vmu)}$.
\end{proposition}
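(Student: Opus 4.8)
The plan is to decouple the two sources of perturbation: the change of weights from $\vw := \vw(\vmu)$ to $\vv$ (measured by $\eta$) and the change of bandit from $\vmu$ to $\vmu'$ (measured by $\varepsilon$). Introducing the intermediate quantity $g(\vmu', \vw)$, I would establish the two inequalities
\[ g(\vmu', \vv) \;\geq\; \frac{(1-\eta)^2}{1+\eta}\, g(\vmu', \vw) \qquad \text{and} \qquad g(\vmu', \vw) \;\geq\; g(\vmu, \vw) - \varepsilon/2 \,, \]
and then chain them, noting that the factor $\tfrac{(1-\eta)^2}{1+\eta} \geq 0$ keeps the combination valid even when the second right-hand side is negative. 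Throughout I use the convention of Section~\ref{subsec:monotonicity_minmaxpb} that $\vmu'$ shares the optimal arm $a^* = a^*(\vmu)$, so that $\Alt(\vmu') = \Alt(\vmu)$; if one drops this and $a^*(\vmu') \neq a^*(\vmu)$, then $\vmu' \in \Alt(\vmu)$ forces $g(\vmu, \vw) \leq \tfrac12 \norm{\vmu-\vmu'}_{\vw}^2 \leq \varepsilon^2/2 \leq \varepsilon/2$ (using $\varepsilon \leq 1$ since means lie in $[0,1]$), so the claim is trivial because its right-hand side is then non-positive.

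For the first inequality I would rely on the closed form~\eqref{eq:def_g} rather than on the variational definition, precisely because the harmonic-type combination $\frac{v_{a^*}v_a}{v_{a^*}+v_a}$ it produces is what generates the stated constant (the variational form would only yield the weaker loss $(1-\eta)$). By definition of $\eta$ one has $(1-\eta)w_c \leq v_c \leq (1+\eta)w_c$ for every $c \in \Segent{K}$, and I restrict to $\eta \leq 1$, the relevant regime where $\vv$ is a genuine perturbation of $\vw$. For each $a \neq a^*$ the numerator $v_{a^*}v_a$ is then at least $(1-\eta)^2 w_{a^*}w_a$ while the denominator $v_{a^*}+v_a$ is at most $(1+\eta)(w_{a^*}+w_a)$, so that $\frac{v_{a^*}v_a}{v_{a^*}+v_a} \geq \frac{(1-\eta)^2}{1+\eta}\,\frac{w_{a^*}w_a}{w_{a^*}+w_a}$. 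Multiplying by $\tfrac12\Delta_a(\vmu')^2$, taking the minimum over $a \neq a^*$, and factoring the constant out gives the first inequality via~\eqref{eq:def_g} applied to both $(\vmu',\vv)$ and $(\vmu',\vw)$.

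For the second inequality I would pass to the variational form and read $g$ as a squared distance. With the weighted seminorm $\norm{\bm{x}}_{\vw}^2 = \sum_{a} w_a x_a^2$, the variational definition of $g$ gives $2g(\vmu,\vw) = D^2$ and $2g(\vmu',\vw) = {D'}^2$, where $D$ and $D'$ are the $\norm{\cdot}_{\vw}$-distances from $\vmu$ and $\vmu'$ to the common set $\Alt(\vmu)$. The reverse triangle inequality yields $D' \geq D - \norm{\vmu-\vmu'}_{\vw}$, and since $\norm{\vmu-\vmu'}_{\vw}^2 = \sum_a w_a (\mu_a - \mu'_a)^2 \leq \varepsilon^2 \sum_a w_a = \varepsilon^2$, we get $D' \geq D - \varepsilon$. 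If $D \geq \varepsilon$ then ${D'}^2 \geq (D-\varepsilon)^2 \geq D^2 - 2\varepsilon D \geq D^2 - \varepsilon$, the last step using the a priori bound $D = \sqrt{2/T} \leq 1/2$ (with $g(\vmu,\vw(\vmu)) = T^{-1}$ from~\eqref{eq:link_T_g}), which follows from $T \geq 8/\Delta_{\min}^2 \geq 8$ in Proposition~\ref{prop:bounds}; halving gives $g(\vmu',\vw) \geq g(\vmu,\vw)-\varepsilon/2$. If instead $D < \varepsilon$, then $g(\vmu,\vw) = D^2/2 < \varepsilon^2/2 \leq \varepsilon/2$ and the inequality again holds trivially since its right-hand side is negative while $g(\vmu',\vw) \geq 0$.

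I expect the second inequality to be the delicate step. The naive reverse-triangle bound only produces a loss of order $\varepsilon D$, and turning this into the clean $\varepsilon/2$ stated in the proposition genuinely relies on the quantitative a priori bound $T \geq 8$ of Proposition~\ref{prop:bounds}, together with the bookkeeping that the optimal arm may a priori differ between $\vmu$ and $\vmu'$. The first inequality is comparatively routine once one commits to the closed form~\eqref{eq:def_g}, which is exactly the representation that surfaces the constant $\tfrac{(1-\eta)^2}{1+\eta}$.
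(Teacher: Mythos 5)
Your proposal is correct, and its skeleton coincides with the paper's: the paper also proves the proposition by chaining a weight-perturbation bound (Lemma~\ref{lem:moving_w}) with a mean-perturbation bound (Lemma~\ref{lem:moving_mu}) through the intermediate quantity $g(\vmu',\vw(\vmu))$, and your first inequality is Lemma~\ref{lem:moving_w} essentially verbatim. Where you genuinely diverge is the second half. The paper proves $g(\vmu',\vw)\geq g(\vmu,\vw)-\varepsilon/2$ on the closed form~\eqref{eq:def_g}, bounding $\abs{\Delta_a^2-{\Delta'_a}^2}\leq 4\varepsilon$ and using $\frac{w_1w_a}{w_1+w_a}\leq\frac14$ (via $w_{\max}\leq\frac12$, Equation~\eqref{eq:bounds_w1}) when the two bandits share a best arm, and it handles the case $a^*(\vmu')\neq a^*(\vmu)$ by interpolating along the segment from $\vmu$ to $\vmu'$ and exploiting that $g(\cdot,\vv)$ vanishes at the tie points where the best arm switches. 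You instead read $2g(\cdot,\vw)$ as a squared $\norm{\cdot}_{\vw}$-distance to the common set $\Alt(\vmu)$, use $1$-Lipschitzness of the distance plus the a priori bound $D=\sqrt{2/T}\leq 1/2$ from $T\geq 8$ (Equation~\eqref{eq:bounds_T}), and dispose of the different-best-arm case by observing that $\vmu'\in\Alt(\vmu)$ forces $g(\vmu,\vw)\leq\varepsilon^2/2\leq\varepsilon/2$, making the right-hand side non-positive. Both routes ultimately draw on Proposition~\ref{prop:bounds} (you through $T\geq 8$, the paper through $w_{\max}\leq 1/2$), and your trivial-case observation is arguably cleaner than the paper's interpolation; its one loose end is the boundary case where $\vmu'$ has tied best arms, which your dichotomy does not literally cover since $\Alt(\vmu)\subset\cG^*$ --- it is repaired by taking $\vlambda\to\vmu'$ within $\Alt(\vmu)$ in the same estimate (then $g(\vmu',\vv)=0$ while $g(\vmu,\vw)\leq\varepsilon^2/2$), a case the paper's interpolation absorbs silently. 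One parenthetical remark of yours is backwards, though it does not affect validity: plugging $v_a\geq(1-\eta)w_a$ into the variational form yields the factor $1-\eta$, which for $\eta\in[0,1]$ is \emph{larger}, hence stronger, than $\frac{(1-\eta)^2}{1+\eta}$; so the variational route would give a simpler proof of a slightly sharper inequality, and the harmonic-mean computation is what the stated constant accommodates, not what it requires.
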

These results will prove to be essential to the proof of the non-asymptotic bounds of Theorem~\ref{thm:bound}. %\change{Note that we will require to control the distance between $\tilde{\vmu}(t)$ and $\vmu$, which is possible with high probability for \EBS thanks to Lemma~\ref{lem:CR_mu}. In contrast, the analysis of the \TaS strategy cannot take full advantage of those results. }

%%% 4) NUMERICAL EXPERIMENTS %%%

\section{Numerical experiments}	\label{sec:experiments}

In this section, we discuss the behavior and performance of \EBS for practical values of confidence $\delta$. We propose a comparison with \TaS, \CRac and LUCB++, and begin with a reminder on those strategies. 

\noindent \TaS \quad	The strategy tracks the optimal weights $\vw(\vmu)$ by estimating it by $\vw(\hat{\vmu}(t))$. Some exploration rate is forced to ensure that bad initial observations does not lead to an under-sampling of some arms (the strategy ensures that each $N_a(t)$ growths at least in $\sqrt{t}$). The stopping rule is the same as the one presented for \EBS. 

\noindent\CRac \quad	The strategy is divided into rounds during which the arms of a currently active set are sampled once. At the end of each round, a decision is made to keep or eliminate the current worst arm from the active set. Several decision rules are possible, we will use the Chernoff rule presented in~\citep{GK16}, which eliminates arm $b$ at the end of round $r$ if $Z_{\hat{a}_r, b}(t) = \frac{r}{4} (\hat{\mu}_{\hat{a}_r}(t) - \hat{\mu}_b(t))^2 > \beta(t, \delta)$ where $\hat{a}_r$ (resp. $t$) is the best arm (resp. the time) at the end of round $r$.

%\noindent LUCB \quad	The strategy~\citep{KTAS12} samples two arms at each round: the one with the current best estimate and the one in the remaining arms with the highest optimistic indice $U_a(t)$ which is an upper confidence bound: 
%\[ U_a(t) = \hat{\mu}_a(t) + \sqrt{\frac{1}{2N_a(t)} \log\Big( \frac{1.25Kt^4}{\delta}\Big)} \;. \]
%For the fairness of the comparison we will take the same stopping condition as \TaS and \EBS. 

\noindent LUCB++ \quad	The strategy~\citep{SJR17} (see also \citealp{KTAS12, HRMS21}) samples two arms at each round: the one with the current best estimate and the one in the remaining arms with the highest optimistic indice $U_a(t)$ which is an upper confidence bound: 
\[ U_a(t) = \hat{\mu}_a(t) + \sqrt{\frac{3}{N_a(t)} \log\Bigl( \frac{\log(N_a(t)) \times 2K}{\delta}\Bigr)} \]
(constant $\sqrt{3}$ appeared to be empirically optimal). For the fairness of the comparison we will take the same stopping condition as \TaS and \EBS. 

\noindent \EBS \quad We ran our experiments with confidence lengths $C_\gamma(s) = \sqrt{\frac{\log(s/\gamma)}{s}}$, and for all strategies we used the same threshold \[ \beta(t, \delta) = \log((\log(t)+1)/\delta)\;. \] 
These choices are more aggressive than what the theoretical analysis suggests: yet, empirically, they appears to guarantee  the desired failure rate. Using the larger intervals of Section 2 would have increased the number of rounds with uniform exploration, and using larger thresholds unnecessarily delays the stopping for all strategies.

We now discuss the numerical pros and cons of \EBS. % comparing to the other strategies. 

\paragraph*{Improving the Stability of \TaS} In Section~\ref{sec:intro}, we highlighted the weaknesses of \TaS, especially the forced exploration parameter and the non-interpretable and unstable sampling strategy during the first rounds. %, at which the strategy already tracks the optimal weights according to the current (poor) estimates. 
On Figures~\ref{fig:experiment1} and~\ref{fig:experiment1_zoom} we see the improvements of \EBS concerning those behaviours. During the first rounds, as for a racing algorithm, a uniform sampling is observed as the learner has not collected enough information (the confidence intervals on all arms are not separated), which is the expected behavior. Then the best arms are sampled more and more often, but still in a more cautious way than \TaS. We observe on Figure~\ref{fig:experiment1_zoom} the stability of the sampling strategies comparing to \TaS during the first rounds: the targeted weights of \EBS are stable and separate from each other cautiously (note that the three last arms still have the same weight at time $1200$) whereas for \TaS, we observe an important variation of the targeted weights with time. As a matter of facts, there is a clear discontinuity each time the estimated best arm changes, as we can see with the red and green arms. We also remark that \TaS uses forced exploration at regular rounds (giving the yellow and blue peaks), which is unnecessary for \EBS as a natural exploration is always performed (Lemma~\ref{lem:LB_exploration_rate}). 

\begin{figure}[h]
  \centering
  \includegraphics[width=0.49\textwidth]{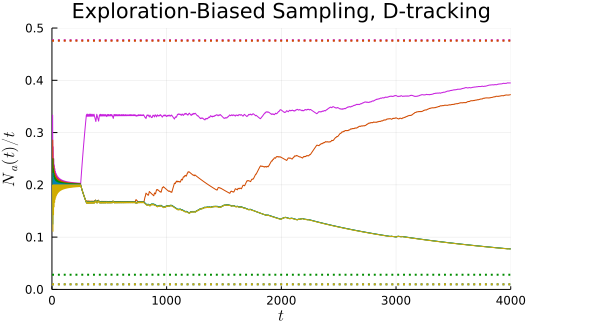} \includegraphics[width=0.49\textwidth]{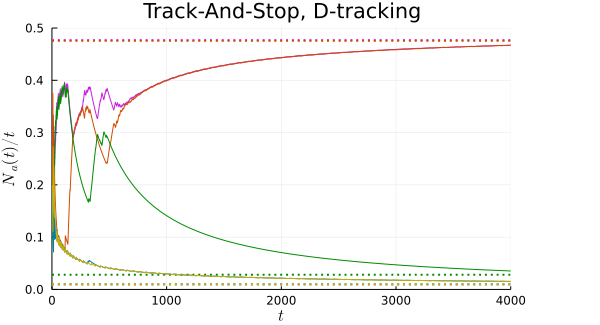}
  \caption{Evolution of the Sampling Frequencies $\vN(t)/t$ on a Simulation of {\sc Exploration-Biased Sampling} and \TaS. ($\delta = 0.01$, $\gamma = 0.2$, and $\vmu = (\textcolor{col1}{0.9}, \textcolor{col2}{0.8}, \textcolor{col3}{0.6}, \textcolor{col4}{0.4}, \textcolor{col5}{0.4})$; the values of $\vw(\vmu) = (\textcolor{col1}{0.477}, \textcolor{col2}{0.476}, \textcolor{col3}{0.028}, \textcolor{col4}{0.010}, \textcolor{col5}{0.010})$ are dotted)} \label{fig:experiment1}
\end{figure}

\begin{figure}[h]
  \centering
  \includegraphics[width=0.49\textwidth]{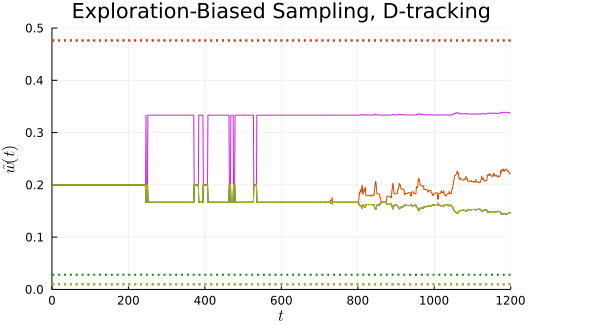} \includegraphics[width=0.49\textwidth]{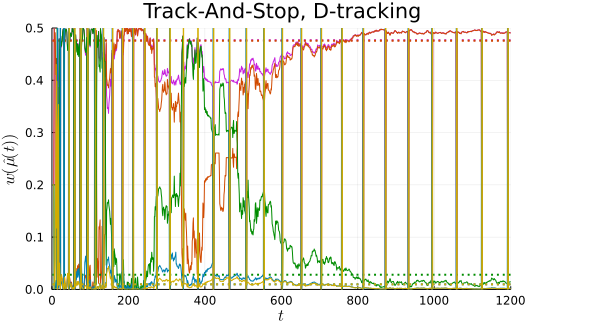}
  \caption{Evolution of the Targeted Weights $\tilde{\vw}(t)$ (resp. $\vw(\hat{\mu}(t))$) During the First $1200$ Rounds on a Simulation of \EBS (resp. \TaS). ($\delta = 0.01$, $\gamma = 0.2$, $\vmu = (\textcolor{col1}{0.9}, \textcolor{col2}{0.8}, \textcolor{col3}{0.6}, \textcolor{col4}{0.4}, \textcolor{col5}{0.4})$)} \label{fig:experiment1_zoom}
\end{figure}

\begin{table*}[h]
	\caption{Empirical Expected Number of Draws $\E_{\vmu}[\tau_\delta]$, Averaged over $1000$ Experiments: $\vmu^{(1)} = (0.9, 0.8, 0.6, 0.4, 0.4)$, $\vw(\vmu^{(1)}) = (0.477, 0.476, 0.028, 0.010, 0.010)$; $\vmu^{(2)} = (0.9, 0.5, 0.45, 0.4)$, $\vw(\vmu^{(2)}) = (0.375, 0.286, 0.195, 0.144)$ }
	\begin{center}
		\begin{tabular}{|c|c|c|c||c|c||c|c||c||c|}
			\hline
			Bandit       & $\delta$  & $\gamma$ & $T \kl(\delta, 1-\delta)$ & EBS C & TaS C & EBS D & TaS D & Racing & LUCB++ \\ \hline\hline
			$\vmu^{(1)}$ & 0.1       & 0.05     & 1476                        & 4727     & 3597  & 4191     & 3477  & 3124 & 3353   \\ \hline
			$\vmu^{(1)}$ & 0.01      & 0.05     & 3782                        & 7363     & 5664  & 6330     & 5584  & 5419 & 5549  \\ \hline
			$\vmu^{(1)}$ & 0.01      & 0.2      & 3782                        & 7090     & 5664  & 6136     & 5584  & 5419 & 5372 \\ \hline 
			$\vmu^{(1)}$ & $10^{-5}$ & 0.2      & 9669                        & 13801    & 12181 & 12376    & 11439 & 11557 & 11644  \\ \hline \hline
			$\vmu^{(2)}$ & 0.1       & 0.05     & 135                         & 476      & 367   & 470      & 322   & 405 & 365   \\ \hline
			$\vmu^{(2)}$ & 0.01      & 0.05     & 347                         & 708      & 588   & 699      & 485   & 542 & 565    \\ \hline
		\end{tabular}
	\end{center}
	
	\label{tab:experiments}
\end{table*}

\paragraph*{Comparisons of the Strategies} The cost of the the cautiousness of the algorithm (the exploration-biased weights) is that it takes a little longer for the proportions of draws of \EBS to converge to the optimal weights. This results in a slightly larger stopping time than \TaS that occurs for every bandit parameter\footnote{Note that the cautiousness of our strategy is required to obtain the non-asymptotic bounds of Theorem~\ref{thm:bound}. %Numerically, it is always longer than \TaS but has the stability benefits presented in the last paragraph. 
}. This can be observed on Table~\ref{tab:experiments}, where we present the performances of \EBS, \TaS, \CRac and LUCB++ with two scenarios and a set of parameters. \EBS globally performs correctly but we see that the other strategies are always a little more efficient. Note that when increasing $\gamma$, the confidence intervals reduces so that the targeted weights are closer to $\vw$, improving the performance of the algorithm. For similar reasons the initial cautiousness of the strategy disappears at long-term, thus when $\delta$ is very small the relative performance of \TaS and \EBS gets closer. Of course, \EBS overperforms \CRac in the long run when the optimal weights are far from the sampling proportions of \CRac (e.g. when $w_1\gg w_2$). %\nota{faire une expérience}

\CRac shows great performance with both $\vmu^{(1)}$ and $\vmu^{(2)}$. This strategy samples the two last arms of the race equally often, thus can be optimal only when $\vw(\vmu)$ has its two highest components of similar value, e.g. when the two best arms are well separated from the others : this is the case of bandit $\vmu^{(1)}$. For $\vmu^{(2)}$ any strategy performs well as the problem is easy. However, \CRac (whose theoretical analysis remains to be written) leads to a few more misidentifications in our experiments that might be linked to the stopping rule we chose here; for fairness reasons, it was taken identical to that of the other algorithms. LUCB++ presents similar performance with \CRac, which can be explained by the similar behaviour of the strategies: LUCB++ samples half time the best arm asymptotically, and the worst arms are eliminated one by one once their indice fall under the two best estimates. 

Finally, note that D-tracking shows better performance than C-tracking, either for \EBS and \TaS. D-tracking indeed benefits directly of the current estimate of $\vmu$ (thus the empirical proportions of draws converge faster to the optimal weight), while the impact is diluted in time with C-tracking. However we did not prove theoretical guarantees for D-tracking. 

Additional experiments showing and interpreting the dependence on parameter $\delta$ of \EBS are postponed to Appendix \ref{appendixG}.

%%% 6) CONCLUSION %%%

\section{Conclusion}\label{sec:conclusion}

We introduced \EBS, a new strategy for the problem of best arm identification with fixed confidence. In addition to asymptotic optimal results, we proved non-asymptotic bounds for this strategy in the case of (sub-)Gaussian bandits. Those finite risk bounds were made possible by a new analysis of the sample complexity optimization problem, and by the design of our strategy which tackles the shortcomings of \TaS: the procedure ensures exploration in an unforced way and stabilizes the sampling strategy, observing uniformly before having a high certainty that one arm is better than another.

It would be interesting but it remains out of reach to generalize this approach to non-Gaussian models:  this requires to extend our results on the sample-complexity optimization problem,  technically challenging task for which the simple and clean arguments developed here are likely to be replaced by much more involved derivations, if this is possible. In addition, it will be necessary to modify the confidence intervals on the arm means in a way that ensures exploration. Another direction of improvement will be to investigate if similar analysis and strategies are possible for the problem of $\varepsilon$-best arm identification.

%%% ACKNOWLEDGEMENTS %%%

\subsubsection*{Acknowledgements}
Aurélien Garivier and Antoine Barrier acknowledges the support of the Project IDEXLYON of the University of Lyon, in the framework of the Programme Investissements d'Avenir (ANR-16-IDEX-0005), and Chaire SeqALO (ANR-20-CHIA-0020-01).

%%% BIBLIOGRAPHY %%%

\bibliographystyle{apalike}
\bibliography{biblio}

\begin{thebibliography}{}

\bibitem[Boucheron et~al., 2013]{BLM16}
Boucheron, S., Lugosi, G., and Massart, P. (2013).
\newblock {\em Concentration {Inequalities}: {A} {Nonasymptotic} {Theory} of
  {Independence}}.
\newblock Oxford University Press, Oxford.

\bibitem[Chen et~al., 2017]{CLQ17}
Chen, L., Li, J., and Qiao, M. (2017).
\newblock Towards {Instance} {Optimal} {Bounds} for {Best} {Arm}
  {Identification}.
\newblock In Kale, S. and Shamir, O., editors, {\em Proceedings of the 2017
  {Conference} on {Learning} {Theory}}, volume~65, pages 535--592. PMLR.

\bibitem[Chernoff, 1959]{Che59}
Chernoff, H. (1959).
\newblock Sequential {Design} of {Experiments}.
\newblock {\em The Annals of Mathematical Statistics}, 30(3):755--770.

\bibitem[Degenne and Koolen, 2019]{DK19}
Degenne, R. and Koolen, W.~M. (2019).
\newblock Pure {Exploration} with {Multiple} {Correct} {Answers}.
\newblock In Wallach, H., Larochelle, H., Beygelzimer, A., d\textquotesingle
  Alch\'{e}-Buc, F., Fox, E., and Garnett, R., editors, {\em Advances in
  {Neural} {Information} {Processing} {Systems}}, volume~32. Curran Associates,
  Inc.

\bibitem[Degenne et~al., 2019]{DKM19}
Degenne, R., Koolen, W.~M., and M{\'e}nard, P. (2019).
\newblock Non-{Asymptotic} {Pure} {Exploration} by {Solving} {Games}.
\newblock In Wallach, H., Larochelle, H., Beygelzimer, A., d\textquotesingle
  Alch\'{e}-Buc, F., Fox, E., and Garnett, R., editors, {\em Advances in
  {Neural} {Information} {Processing} {Systems}}, volume~32. Curran Associates,
  Inc.

\bibitem[Even-Dar et~al., 2006]{EMM06}
Even-Dar, E., Mannor, S., and Mansour, Y. (2006).
\newblock Action {Elimination} and {Stopping} {Conditions} for the
  {Multi}-{Armed} {Bandit} and {Reinforcement} {Learning} {Problems}.
\newblock {\em Journal of Machine Learning Research}, 7(39):1079--1105.

\bibitem[Garivier et~al., 2019]{GHMS19}
Garivier, A., Hadiji, H., Menard, P., and Stoltz, G. (2019).
\newblock {KL}-{UCB}-switch: optimal regret bounds for stochastic bandits from
  both a distribution-dependent and a distribution-free viewpoints.
\newblock {\em arXiv:1805.05071}.

\bibitem[Garivier and Kaufmann, 2016]{GK16}
Garivier, A. and Kaufmann, E. (2016).
\newblock Optimal {Best} {Arm} {Identification} with {Fixed} {Confidence}.
\newblock In Feldman, V., Rakhlin, A., and Shamir, O., editors, {\em Conference
  on {Learning} {Theory}}, volume~49, pages 998--1027. PMLR.

\bibitem[Howard et~al., 2021]{HRMS21}
Howard, S.~R., Ramdas, A., McAuliffe, J., and Sekhon, J. (2021).
\newblock Time-uniform, nonparametric, nonasymptotic confidence sequences.
\newblock {\em The Annals of Statistics}, 49(2):1055--1080.

\bibitem[Jamieson et~al., 2014]{JMNB14}
Jamieson, K., Malloy, M., Nowak, R., and Bubeck, S. (2014).
\newblock lil{\textquoteright} {UCB} : {An} {Optimal} {Exploration} {Algorithm}
  for {Multi}-{Armed} {Bandits}.
\newblock In Balcan, M.~F., Feldman, V., and Szepesvári, C., editors, {\em
  Proceedings of {The} 27th {Conference} on {Learning} {Theory}}, volume~35,
  pages 423--439. PMLR.

\bibitem[Kalyanakrishnan1 et~al., 2012]{KTAS12}
Kalyanakrishnan1, S., Tewari, A., Auer, P., and Stone, P. (2012).
\newblock {PAC} {Subset} {Selection} in {Stochastic} {Multi}-armed {Bandits}.
\newblock In {\em Proceedings of the 29th {International} {Conference} on
  {Machine} {Learning}}.

\bibitem[Karnin et~al., 2013]{KKS13}
Karnin, Z., Koren, T., and Somekh, O. (2013).
\newblock Almost {Optimal} {Exploration} in {Multi}-{Armed} {Bandits}.
\newblock In Dasgupta, S. and McAllester, D., editors, {\em Proceedings of the
  30th {International} {Conference} on {Machine} {Learning}}, volume~28, pages
  1238--1246. PMLR.

\bibitem[Kaufmann and Kalyanakrishnan, 2013]{KK13}
Kaufmann, E. and Kalyanakrishnan, S. (2013).
\newblock Information {Complexity} in {Bandit} {Subset} {Selection}.
\newblock In Shalev-Shwartz, S. and Steinwart, I., editors, {\em Proceedings of
  the 26th {Annual} {Conference} on {Learning} {Theory}}, volume~30, pages
  228--251. PMLR.

\bibitem[Koc{\'a}k and Garivier, 2020]{KG20}
Koc{\'a}k, T. and Garivier, A. (2020).
\newblock Best {Arm} {Identification} in {Spectral} {Bandits}.
\newblock In Bessiere, C., editor, {\em Proceedings of {The} 29th
  {International} {Joint} {Conference} on {Artificial} {Intelligence}},
  volume~3, pages 2220--2226. International Joint Conferences on Artificial
  Intelligence Organization.

\bibitem[Lattimore and Szepesv{\'a}ri, 2020]{LS20}
Lattimore, T. and Szepesv{\'a}ri, C. (2020).
\newblock {\em Bandit {Algorithms}}.
\newblock Cambridge University Press, Cambridge.

\bibitem[M{\'e}nard, 2019]{Men19}
M{\'e}nard, P. (2019).
\newblock Gradient {Ascent} for {Active} {Exploration} in {Bandit} {Problems}.
\newblock {\em arXiv:1905.08165}.

\bibitem[Russo, 2016]{Rus16}
Russo, D. (2016).
\newblock Simple {Bayesian} {Algorithms} for {Best} {Arm} {Identification}.
\newblock In Feldman, V., Rakhlin, A., and Shamir, O., editors, {\em
  Proceedings of the 2016 {Conference} on {Learning} {Theory}}, volume~49,
  pages 1417--1418. PMLR.

\bibitem[Shang et~al., 2020]{SHKMV20}
Shang, X., Heide, R., Menard, P., Kaufmann, E., and Valko, M. (2020).
\newblock Fixed-confidence guarantees for {Bayesian} best-arm identification.
\newblock In Chiappa, S. and Calandra, R., editors, {\em Proceedings of the
  23rd {International} {Conference} on {Artificial} {Intelligence} and
  {Statistics}}, volume 108, pages 1823--1832. PMLR.

\bibitem[Simchowitz et~al., 2017]{SJR17}
Simchowitz, M., Jamieson, K., and Recht, B. (2017).
\newblock The {Simulator}: {Understanding} {Adaptive} {Sampling} in the
  {Moderate}-{Confidence} {Regime}.
\newblock In Kale, S. and Shamir, O., editors, {\em Proceedings of the 2017
  {Conference} on {Learning} {Theory}}, volume~65, pages 1794--1834. PMLR.

\bibitem[Tirinzoni et~al., 2020]{TPRL20}
Tirinzoni, A., Pirotta, M., Restelli, M., and Lazaric, A. (2020).
\newblock An {Asymptotically} {Optimal} {Primal}-{Dual} {Incremental}
  {Algorithm} for {Contextual} {Linear} {Bandits}.
\newblock In Larochelle, H., Ranzato, M., Hadsell, R., Balcan, M.~F., and Lin,
  H., editors, {\em Advances in {Neural} {Information} {Processing} {Systems}},
  volume~33. Curran Associates, Inc.

\end{thebibliography}

%%% APPENDIX %%%

\newpage
\appendix

\section*{Appendix outline}

The appendix is organized as follows:
\begin{enumerate}[label=\Alph*.]
	\item Precise description of the \TaS strategy
	\item Proof of the time-uniform confidence regions guarantees for $\vmu$ (Lemma~\ref{lem:CR_mu})
	\item Proofs of the results on the sample complexity for Gaussian arms (Section~\ref{sec:sample_complexity})
	\item Proof of the non-asymptotic result (Theorem~\ref{thm:bound})
	\item Technical results associated to the proof of Theorem~\ref{thm:bound} (complements to Appendix~\ref{appendixD})
	\item Asymptotic analysis of \EBS (Theorems~\ref{thm:asymptotic_optimality_E} and~\ref{thm:asymptotic_optimality_as})
	\item Additional experiments to see the dependency of \EBS in $\delta$
\end{enumerate}

Without loss of generality (see~\citealp{GHMS19}), we assume that for any $a \in \Segent{K}$, $(X_{a, n})_{n \geq 1}$ is a sequence of random variables independent and identically distributed with distribution $\cN(\mu_a, 1)$, we set $\hat{\mu}_{a, n} = \frac{1}{n} \sum_{p\in [n]} X_{a, p}$ for all $n \geq 1$ and assume that 
\begin{equation} \label{eq:local_estimate}
	\forall t \geq K, \quad \hat{\mu}_{a} (t) = \hat{\mu}_{a, N_a(t)} \;.
\end{equation}

%%% APPENDIX A %%%

\section{The \TaS strategy}	\label{appendixA}

We recall the description of the \TaS strategy in Algorithm~\ref{algo:Track-and-Stop}. We use the notations of Section \ref{sec:strategy} and algorithm {\sc Optimal Weights} (Algorithm~\ref{algo:Newton_r} of Appendix \ref{appendixC3}) which efficiently computes the solution of optimization problem~\eqref{eq:def_T}.

\begin{center}
\begin{minipage}{13cm}
\begin{algorithm}[H]
	\caption{\TaS \label{algo:Track-and-Stop}}

	\DontPrintSemicolon
	\SetKwInOut{Input}{input}\SetKwInOut{Output}{output}
	\KwIn{confidence level $\delta$}
	\MyKwIn{threshold function $\beta(t, \delta)$}
	\KwOut{stopping time $\tau_\delta$}
	\MyKwOut{estimated best arm $\hat{a}_{\tau_\delta}$}
	
	\BlankLine
	Observe each arm once ; $t \gets K$ \;
	\For{$s = 0$ \KwTo $K-1$} {
		$\tilde{\vw}(s) \gets (1/K, \dots, 1/K)$
	}
	\While{$Z(t) \leq \beta(t, \delta)$}{
		\If{$U_t = \{a \in [K] \::\: N_a(t) < \sqrt{t} - K/2\} \neq \varnothing$}{
		Choose $A_{t+1} \in \argmin_{a \in U_t} N_a(t)$	\tcc*{forced exploration}
		}
		\Else{
		$\tilde{\vw}(t) \gets $ {\sc Optimal Weights}($\hat{\vmu}(t)$)\;
		Choose $A_{t+1} \in \argmin_{a \in \Segent{K}} N_a(t) - \sum_{s \in [t]} \tilde{w}_a(s)$ \tcc*{C-tracking}
		}
		Observe $Y_{A_{t+1}}$ and increase $t$ by $1$
	}
	$\tau_\delta \gets t$ ; $\hat{a}_{\tau_\delta} \gets \argmax_{a \in \Segent{K}} \hat{\mu}_a (t)$
\end{algorithm}
\end{minipage}
\end{center}

The presented algorithm uses C-tracking (the cumulative sums of the weights are tracked), but one can consider D-tracking for a direct track of the current weight (by replacing $\sum_{s \in [t]} \tilde{w}_a(s)$ by $t \tilde{w}_a(t)$).

%\begin{center}
%\begin{minipage}{13cm}
%\begin{algorithm}[H]
%	\caption{\TaS \label{algo:Track-and-Stop}}
%
%	\DontPrintSemicolon
%	\SetKwInOut{Input}{input}\SetKwInOut{Output}{output}
%	\KwIn{confidence level $\delta$}
%	\MyKwIn{threshold function $\beta(t, \delta)$}
%	\KwOut{stopping time $\tau_\delta$}
%	\MyKwOut{estimated best arm $\hat{a}_{\tau_\delta}$}
%	
%	\BlankLine
%	Observe each arm once ; $t \gets K$ \;
%	\For{$s = 0$ \KwTo $K-1$} {
%		$\tilde{\vw}(s) \gets (1/K, \dots, 1/K)$
%	}
%	\While{$Z(t) \leq \beta(t, \delta)$}{
%	
%		
%		\tcp{C-tracking} 
%		
%		$\hat{\vw}(t) \gets $ {\sc Optimal Weights}($\hat{\vmu}(t)$)\;
%		$\varepsilon_t \gets (2\sqrt{K^2+t})^{-1}$
%		
%		$\tilde{\vw}(t) \gets \texttt{proj}^{\varepsilon_t}(\hat{\vw}(t))$
%		
%		Choose $A_{t+1} \in \argmin_{a \in \Segent{K}} N_a(t) - \underset{s \in [t]}{\sum} \tilde{w}_a(s)$ \;
%	
%		\tcp{D-tracking} 
%		
%		\If{$U_t = \{a \in [K] \::\: N_a(t) < \sqrt{t} - K/2\} \neq \varnothing$}{
%		Choose $A_{t+1} \in \argmin_{a \in U_t} N_a(t)$	\tcc*{forced exploration}
%		}
%		\Else{
%		$\tilde{\vw}(t) \gets $ {\sc Optimal Weights}($\hat{\vmu}(t)$)\;
%		Choose $A_{t+1} \in \argmin_{a \in \Segent{K}} N_a(t) - t \tilde{w}_a(t)$} \;
%		
%		Observe $Y_{A_{t+1}}$ and increase $t$ by $1$
%	}
%	$\tau_\delta \gets t$ ; $\hat{a}_{\tau_\delta} \gets \argmax_{a \in \Segent{K}} \hat{\mu}_a (t)$
%\end{algorithm}
%\end{minipage}
%\end{center}

%%% APPENDIX B %%%

\section{Proof of Lemma~\ref{lem:CR_mu}}	\label{appendixB}

By union bound we only have to show that for any $\gamma \in (0, 1)$ and $a \in \Segent{K}$:
\[ \P_{\vmu} \Big(\exists t \geq K : \abs{\hat{\mu}_a(t) - \mu_a} \geq C_\gamma(N_a(t)) \Big) \leq \gamma \;. \]

\noindent Fix $\gamma \in (0, 1)$ and $a \in \Segent{K}$. Note that as all arms are observed once at the beginning (see Algorithm~\ref{algo:ExplorationBiased}), we have $N_a(K) = 1$. Thus using Equation~\eqref{eq:local_estimate}:
\begin{align*}
	\P_{\vmu} \Big(\exists t \geq K : \abs{\hat{\mu}_a(t) - \mu_a}  \geq C_\gamma(N_a(t)) \Big) & = \P_{\vmu} \Big(\exists t \geq K : \abs{\hat{\mu}_{a, N_a(t)} - \mu_a} \geq C_\gamma(N_a(t)) \Big) \\
	                                                                                           & = \P_{\vmu} \Big(\exists n \in \N^* : \abs{\hat{\mu}_{a, n} - \mu_a} \geq C_\gamma(n) \Big) \;.
\end{align*}
Then we use a peeling trick (see for instance~\citealp{BLM16}):
\begin{align*}
	\P_{\vmu} \Big(\exists n \in \N^* : \abs{\hat{\mu}_{a, n} - \mu_a} \geq C_\gamma(n) \Big) & \leq \sum_{k \geq 0} \P \Big(\exists n \in [2^k, 2^{k+1}] : \Big|\frac{1}{p} \sum_{p\in [n]} (X_{a, p} - \mu_a) \Big| \geq C_\gamma(n) \Big)           \\
	                                                                                         & = \sum_{k \geq 0} \P \Big(\exists n \in [2^k, 2^{k+1}] : \Big| \sum_{p\in [n]} X_{a, p} - \mu_a \Big| \geq nC_\gamma(n) \Big)                          \\
	                                                                                         & \overset{\text{(a)}}{\leq} \sum_{k \geq 0} \P \Big(\exists n \in [0, 2^{k+1}] : \Big| \sum_{p\in [n]} X_{a, p} - \mu_a \Big| \geq 2^k C_\gamma(2^k) \Big) \\
	                                                                                         & \overset{\text{(b)}}{\leq} 2 \sum_{k \geq 0} \exp \Big( - \frac{(2^k C_\gamma(2^k))^2}{2 \times 2^{k+1}} \Big)                                        \\
	                                                                                         & = 2 \sum_{k \geq 0} \exp \big( -\log(2^{k+2}/\gamma) \big) \\
	                                                                                         &= 2\gamma \sum_{k \geq 0} \frac{1}{2^{k+2}} \\
	                                                                                         &= \gamma \;.
\end{align*}
(a) is obtained using the fact that $n \mapsto nC_\gamma(n)$ is non-decreasing and (b) is a well-known inequality for the sum of sub-Gaussian variables, see for instance \citet[Theorem 9.2]{LS20}.

%%% APPENDIX B %%%

\section{Proofs of results presented in Section~\ref{sec:sample_complexity}}	\label{appendixC}

In this appendix, we first prove Proposition~\ref{prop:links_r}, then we focus on the consequences developed in Section~\ref{sec:sample_complexity}.

\vskip 1em

\noindent \textbf{For the sake of simplicity, we assume that $a^* = 1$, except in the last section where there is no uniqueness assumption on the best arm of the bandits. }

\subsection{Solving the Optimization Problem}	\label{appendixC1}

\begin{proof}[Proof of Equation~\eqref{eq:def_g}]
	Let $\vv \in \Sigma_K$. One has:
	\begin{align*}
		g(\vmu, \vv) & = \inf_{\vlambda \in \Alt(\vmu)} \sum_{a \in \Segent{K}} v_a \frac{(\mu_a-\lambda_a)^2}{2}  = \frac{1}{2} \min_{a \neq 1} \inf_{\lambda_1 < \lambda_a}  v_1 (\mu_1-\lambda_1)^2 + v_a (\mu_a-\lambda_a)^2     \\
		             & = \frac{1}{2} \min_{a \neq 1} \inf_{\mu_1 \leq \lambda \leq \mu_a}  v_1 (\mu_1-\lambda)^2 + v_a (\mu_a-\lambda)^2  = \frac{1}{2} \min_{a \neq 1} \frac{v_1v_a}{v_1+v_a} (\mu_1-\mu_a)^2
	\end{align*}
	since the minimum is reached at $\lambda = \frac{v_1\mu_1 + v_a\mu_a}{v_1+v_a}$.
	\end{proof}

\begin{proof}[Proof of Proposition~\ref{prop:links_r}]  
	Let us define, for some $v_1 \in [0, 1]$:
	\begin{equation} C(v_1) = \max_{v_{2:K} \;: \;\vv \in \Sigma_K} \min_{a \neq 1} \frac{v_1v_a}{v_1+v_a} \Delta_a^2 \label{eq:defCofv}\end{equation}
	so that
	\begin{equation}	\label{eq:T=C(w1)}
		{T}^{-1} = \max_{\vv \in \Sigma_K} g(\vmu, \vv) = \frac{1}{2} \max_{v_1 \in [0, 1]} C(v_1) \;.
	\end{equation}

	Fix $v_1 \in [0, 1]$. The maximum in Equation~\eqref{eq:defCofv} is reached for $v_{2:K}$ such that  all the ($\frac{v_1v_a}{v_1+v_a} \Delta_a^2)_{a \neq 1}$ are equal, which happens when the $(v_a)_{a \neq 1}$ equalize those costs: $C$ is such that
	\[\forall a \neq 1, \quad C = \frac{v_1v_a}{v_1+v_a} \Delta_a^2 \]
	and hence:
	\begin{equation} 	\label{eq:link_wa_w1}
		\forall a \neq 1, \quad v_a = \frac{v_1C}{v_1 \Delta_a^2 - C} \;. 
	\end{equation}
	The fact that $\vv \in \Sigma_K$ yields:
	\begin{equation}	\label{eq:rel_w1_C}
		\Phi(v_1, C) := v_1 +\sum_{a \neq 1} \frac{v_1 C}{v_1 \Delta_a^2 - C} -1 = 0\;.
	\end{equation}
	By the implicit function theorem, there exists a mapping $C(v_1)$ such that $\Phi(v_1, C(v_1))=0$ and
	\begin{align*}
		C'(v_1) & = -\frac{\frac{\partial \Phi}{\partial v_1}\big(v_1, C(v_1)\big)}{\frac{\partial \Phi}{\partial C}\big(v_1, C(v_1)\big)} = -\frac{1 + \sum_{a \neq 1} \frac{C(v_1) (v_1\Delta_a^2-C(v_1))-v_1C(v_1)\Delta_a^2}{(v_1\Delta_a^2-C(v_1))^2}}
		{v_1^2\sum_{a \neq 1} \frac { \Delta_a^2} {(v_1\Delta_a^2-C(v_1))^2} }
		= -\frac{ 1 - \sum_{a \neq 1} \frac{1}{(v_1\Delta_a^2/C(v_1) -1)^2} }{v_1^2\sum_{a \neq 1} \frac { \Delta_a^2} {(v_1\Delta_a^2-C(v_1))^2} }\;.
	\end{align*}
	Hence $C(v_1)$ is a smooth non-negative function with a continuous derivative. By Equation~\eqref{eq:T=C(w1)}, it vanishes when $v_1\to 0$ and $v_1\to 1$, and hence its maximum is reached at a point $w_1$ where $C'(w_1)=0$.
	Define $r = w_1/C(w_1)$ by the relation
	\[ C'(w_1)=0 \quad \iff \quad 1 - \sum_{a \neq 1} \frac{1}{\big(\frac{w_1}{C(w_1)}\Delta_a^2 -1\big)^2} = 0 \]
	$r$ is the unique solution of $\phi_{\vmu}(r) = 0$.

	\noindent Equations~\eqref{eq:link_r_w1},~\eqref{eq:def_wa} and~\eqref{eq:link_r_T} can be respectively derived from~\eqref{eq:rel_w1_C},~\eqref{eq:link_wa_w1} and~\eqref{eq:T=C(w1)}.
	It remains to obtain Equation~\eqref{eq:w1=norm2_wa} by combining Equation~\eqref{eq:def_wa} and the characterization $\phi_{\vmu}(r) = 0$:
	\[ \sum_{a \neq 1} w_a^2 = w_1^2 \sum_{a \neq 1} \frac{1}{(r\Delta_a^2-1)^2} = w_1^2 (\phi_{\vmu}(r) + 1) = w_1^2 \;. \]
	\end{proof}

\begin{proof}[Proof of Corollary~\ref{cor:weights_order}]
	When $a, b$ are suboptimal, the result is a direct consequence of Equation~\eqref{eq:def_wa} of Proposition~\ref{prop:links_r}. 
	It remains to see that $w_1 > \max_{a \neq 1} w_a$, which is a direct consequence of Equation~\eqref{eq:w1=norm2_wa} and the fact that all weights are positive. 
%	It remains to show that $w_1 > \max_{a \neq 1} w_a$. Define $q_a = \frac{1}{r\Delta_a^2-1}$ for $a \in \Segent{K}$. As $\phi_{\vmu}(r) = 0$, the $(q_a^2)_{a \neq 1}$ are positive and sum to $1$, hence for any $a \neq 1$, $q_a < 1$ (strict inequality occurs as $K \geq 3$) and thus by Equation~\eqref{eq:def_wa} of Proposition~\ref{prop:links_r}:
%	\[ w_a = w_1 q_a < w_1 \;. \]
\end{proof}

\subsection{Proof of Proposition~\ref{prop:bounds}} \label{sec:proofProp4}

Defining $q_a = \frac{1}{r\Delta_a^2-1}$ for $a \neq 1$, we will use that, as $\phi_{\vmu}(r) = 0$, the $(q_a^2)_{a \neq 1}$ are positive and sum to $1$, hence for any $a \neq 1$ one has $q_a \leq 1$ (with strict inequality when $K \geq 3$). 

%We will use that $q_a = \frac{1}{r\Delta_a^2-1} \leq 1$ for any $a \neq 1$, as shown in the proof of Corollary~\ref{cor:weights_order} (large inequality occurs when $K = 2$).
Let us begin with Equation~\eqref{eq:bounds_w1}. As we assume $a^* = 1$, $w_{\max} = w_1$ by Corollary~\ref{cor:weights_order}. Using Equation~\eqref{eq:link_r_w1} of Proposition~\ref{prop:links_r} one has:
\begin{itemize}
	\item on the one hand
	      \begin{align*}
		      w_1 & = \Big(1+\sum_{a \neq 1} \frac{1}{r\Delta_a^2 - 1}\Big)^{-1}        &  & \text{by Equation~\eqref{eq:link_r_w1} of Proposition~\ref{prop:links_r}} \\
		            & \leq \Big(1+\sum_{a \neq 1} \frac{1}{(r\Delta_a^2 - 1)^2}\Big)^{-1} &  & \text{as $q_a \leq 1$}                                                    \\
		            & = \frac{1}{2}                                                       &  & \text{as } \phi_{\vmu}(r) = 0
	      \end{align*}
	      giving the upper bound ;

	\item on the other hand, by the Cauchy-Schwarz inequality:
	      \begin{equation*}
		      w_1 \geq \Bigg(1+\sqrt{ (K-1) \sum_{a \neq 1} \frac{1}{(r\Delta_a^2 - 1)^2} } \Bigg)^{-1} = \frac{1}{1+\sqrt{K-1}}\;. \label{eq:LB_w1}
	      \end{equation*}
\end{itemize}

\noindent We now prove Inequalities~\eqref{eq:bounds_r} : 
\begin{itemize}
	\item since $q_a \leq 1$ or equivalently $r\Delta_a^2 \geq 2$ for every $a \neq 1$,
	      \begin{equation*}	\label{eq:LB1_r}
		      r \geq \frac{2}{\Delta_{\min}^2} \;.
	      \end{equation*}

	\item since $\overline{\Delta^2} = \frac{1}{K-1} \sum_{a \neq 1} \Delta_a^2$, by convexity of $x \mapsto \frac{1}{(rx-1)^2}$:
	      \[ \frac{1}{K-1}\sum_{a \neq 1} \frac{1}{ \big(\frac{1+\sqrt{K-1}}{\overline{\Delta^2}}\Delta_a^2 - 1\big)^2 }  \geq  \frac{1}{\big(\frac{1+\sqrt{K-1}}{\overline{\Delta^2}} \overline{\Delta^2} - 1\big)^2}  =  \frac{1}{K-1} \]
	      and hence $\phi_{\vmu}(\frac{1+\sqrt{K-1}}{\overline{\Delta^2}}) \geq 0$, which by decreasing of $\phi_{\vmu}$ (Lemma~\ref{lem:convexity_phi}) gives $ r\geq \frac{1+\sqrt{K-1}}{\overline{\Delta^2}}$.

	\item one can also check that
	      \[ \phi_{\vmu} \Big( \frac{1+\sqrt{K-1}}{\Delta_{\min}^2} \Big) = \sum_{a \neq 1} \frac{1}{\big( \frac{1+\sqrt{K-1}}{\Delta_{\min}^2} \Delta_a^2-1 \big)^2} - 1 \leq 0 \]
	      so that $r \leq \frac{1+\sqrt{K-1}}{\Delta_{\min}^2}$.
\end{itemize}

\noindent Finally, combining the obtained inequalities with Equation~\eqref{eq:link_r_T} yields Equation~\eqref{eq:bounds_T}.

\smallskip

To conclude this section, we discuss about the tightness of the proven inequalities.
\begin{itemize}[topsep=0cm]
	\item First note that when $K=2$, lower and upper bounds match in Inequalities~\eqref{eq:bounds_r},~\eqref{eq:bounds_w1} and~\eqref{eq:bounds_T}. In that case the problem is easy as we always have $\vw = (0.5, 0.5)$.

	\item In fact, equalities $r = 2/\Delta_{\min}^2$, $w_1 = 1/2$ and $T = 8/\Delta_{\min}^2$ occur if and only if $K=2$. This is because the $(q_a)_{a \neq 1}$ are positive and sum to $1$ (thus $q_2=1$ only when $K=2$). The presence of other arms thus increases $r$ and $T$ while decreases $w_1$.

	\item If there is at least $3$ arms, then the remaining equalities $w_1 = (1+\sqrt{K-1})^{-1}$, $r = (1+\sqrt{K-1})/\ov{\Delta^2}$, $r = (1+\sqrt{K-1})/\Delta_{\min}^2$ and $T = 2\big(1+\sqrt{K-1}\big)^2 / \Delta_{\min}^2$ are reached if and only if $\Delta_{\min} = \Delta_{\max}$, or in other words $\Delta_2 = \dots = \Delta_K$. Indeed, the condition can be obtained by studying the equality cases in the proof above, using the equality case of the Cauchy-Schwarz inequality for $w_1$, the strict convexity of $x \mapsto \frac{1}{(rx-1)^2}$ and the decreasing of $\phi_{\vmu}$ for $r$ and finally the link $T = 2r/w_1$ for $T$. Note that in that case, $T$ grows linearly with $K$.
\end{itemize}

\subsection{Computing $r$}	\label{appendixC3}

At the sight of Proposition~\ref{prop:links_r}, it suffices to compute $r$ to obtain the values of both the optimal weight vector and the sample complexity.

The function $\phi_{\vmu}$ is convex and strictly decreasing on $(1/\Delta_{\min}^2, +\infty)$ (Lemma~\ref{lem:convexity_phi}). Hence, when initialized with a value $r_0<r$, the iterates of a Newton procedure remain smaller than $r$.
The lower bound of Inequalities~\eqref{eq:bounds_r} of Proposition~\ref{prop:bounds} permits such an initialization.
The convergence is quadratic (the number of correct digits roughly doubles at every step), which implies that a few iterations are sufficient to guarantee machine precision. The cost of the algorithm can hence be considered proportional to that of evaluating $\phi_{\vmu}(r)$, which is linear in the number of arms. See Algorithm~\ref{algo:Newton_r} for details.

\begin{center}
\begin{minipage}{11cm}
\begin{algorithm}[H]
	\caption{\textsc{Optimal Weights}} 	\label{algo:Newton_r}

	\DontPrintSemicolon
	\SetKwInOut{Input}{input}\SetKwInOut{Output}{output}
	\KwIn{bandit $\vmu \in \cG^*$ with best arm $1$}
	\MyKwIn{tolerance parameter $\mathrm{tol}$ (typically $10^{-10}$)}
	\KwOut{optimal weight vector $\vw$}
	\MyKwOut{characteristic time $T$}

	\BlankLine
	\For{$a = 2$ \KwTo $K$}{
		$\Delta_a \gets \mu_1-\mu_a$
	}
	$\displaystyle{\phi_{\vmu}(r) \gets \sum_{a \neq 1} \frac{1}{(r\Delta_a^2 -1)^2} - 1}$ ; $\displaystyle{\phi_{\vmu}'(r) \gets -2 \sum_{a \neq 1} \frac{\Delta_a^2}{(r\Delta_a^2 -1)^3}}$ \;
	$\displaystyle{r \gets \max\Big( \frac{2}{\Delta_{\min}^2}, \frac{1+\sqrt{K-1}}{\overline{\Delta^2}} \Big)}$ \;
	\While{$|\phi_{\vmu}(r)| \geq \mathrm{tol}$}{
		$\displaystyle{r \gets r -  \frac{\phi_{\vmu}(r)}{\phi_{\vmu}'(r)}}$
	}
	$\displaystyle w_1 \gets \biggl(1+\sum_{a \neq 1} \frac{1}{r\Delta_a^2 - 1}\biggr)^{-1}$ \;
	\For{$a = 2$ \KwTo $K$}{
		$\displaystyle w_a \gets \frac{w_1}{r\Delta_a^2 - 1}$
	}
	$T \gets 2 \frac{r}{w_1}$
\end{algorithm}
\end{minipage}
\end{center}

\subsection{On the monotonicity of the $\min$-$\max$ problem}	\label{appendixC4}

%\begin{figure}
%	\centering
%	\includegraphics[width=0.32\linewidth]{PotentialBandit1.pdf}
%	\includegraphics[width=0.32\linewidth]{PotentialBandit2.pdf}
%	\includegraphics[width=0.32\linewidth]{PotentialBandit3.pdf}
%	\caption{List of the bandits $(\tilde{\vmu}^{\text{test}(a)})_{a \in \text{PotentialBest}}$ tried by Algorithm~\ref{algo:OptimisticWeights} for the example confidence region in red with $\text{PotentialBest} = \{1, 2, 3\}$. From left to right: $\tilde{\vmu}^{\text{test}(1)}$, $\tilde{\vmu}^{\text{test}(2)}$ and $\tilde{\vmu}^{\text{test}(3)}$ 	\label{fig:example_possible_optimistic_bandits}}
%\end{figure}

%On Figure~\ref{fig:example_possible_optimistic_bandits} you can see the list of the PotentialBest bandits tested by Algorithm~\ref{algo:OptimisticWeights} when looking for the exploration-biaised bandit $\tilde{\vmu}$. 

In this section we prove Lemmas~\ref{lem:moving_one_suboptimal_arm},~\ref{lem:moving_optimal_arm} and~\ref{lem:moving_worst_arms}, and then use those Lemmas to prove Proposition~\ref{prop:optimistic_weights}. We recall that we assume $K \geq 3$ in this section (note that Proposition~\ref{prop:optimistic_weights} is trivial when $K = 2$). 

\begin{proof}[Proof of Lemma~\ref{lem:moving_one_suboptimal_arm}] 
	\phantom{}
	\begin{enumerate}
		\item Since
		      \[ \sum_{a \neq 1} \frac{1}{(r{\Delta'_a}^2-1)^2} < \sum_{a \neq 1} \frac{1}{(r\Delta_a^2-1)^2} =1\;,\]
		      it holds that $r'<r$. It implies that for $a\notin \acco{1, b}$ one has:
		      \[ \frac{1}{r'{\Delta'_a}^2-1} > \frac{1}{r\Delta_a^2-1} \;. \]
		      As $K \geq 3$, such an arm $a$ exists and hence as $\phi_{\vmu}(r) = 0 = \phi_{\vmu'}(r')$:
		      \[ \frac{1}{r'{\Delta'_b}^2-1} < \frac{1}{r\Delta_b^2-1} \]
		      or equivalently $r'{\Delta'_b}^2-1 > r\Delta_b^2-1$.

		      Combining those inequalities with Equation~\eqref{eq:def_wa} of Proposition~\ref{prop:links_r}, we have for all $a \notin \{1, b\}$:
		      \[ \frac{w'_a}{w'_b} = \frac{r'{\Delta'_b}^2-1}{r'{\Delta'_a}^2-1} > \frac{r\Delta_b^2-1}{r\Delta_a^2-1} = \frac{w_a}{w_b} \;. \]
		      Besides, $w'_1/w'_b = r'{\Delta'_b}^2-1> r\Delta_b^2-1 = w_1/w_b$. Hence,
		      \[ \frac{1-w'_b}{w'_b} = \sum_{a\neq b}\frac{w'_a}{w'_b} > \sum_{a\neq b}\frac{w_a}{w_b}= \frac{1-w_b}{w_b} \]
		      and thus $w'_b<w_b$.

		\item For any $\vnu \in \cG^*$ with best arm $1$, one can see $\vw(\vnu)$ or its components as a function of $\vDelta^2(\vnu)$. Fix $a \notin \{ 1, b \}$ and define $F_a(\vDelta^2(\vnu))$ as
		      \[ F_a(\vDelta^2(\vnu)) = \frac{1}{w_a(\vnu)} = \frac{r(\vnu)\Delta_a(\vnu)^2-1}{w_1(\vnu)} = (r(\vnu)\Delta_a^2-1) + \sum_{c \neq 1} \frac{r(\vnu)\Delta_a^2-1}{r(\vnu)\Delta_c^2-1} \]
		      where the right-inequalities are derived from Equations~\eqref{eq:link_r_w1} and~\eqref{eq:def_wa} of Proposition~\ref{prop:links_r}. Recall that $r(\vnu)$ also depends uniquely on the gaps, as the unique solution of $\phi_{\vnu} = 0$. In the following calculations we write $r$ for $r(\vnu)$ but the dependency with respect to the gaps is crucial.

		      Fix $d_1 = 0$ and $d_a = \Delta_a^2$ for $c \neq \{ 1, b \}$. We want to see the change of $F_a$ with respect to $d_b = \Delta_b^2$. We can take the partial derivative:
		      \begin{align*}
			      \frac{\partial F_a}{\partial d_b} & = \frac{\partial r}{\partial d_b} d_a + \sum_{c \neq 1} \left[\frac{\frac{\partial r}{\partial d_b}d_a}{rd_c-1} - \frac{rd_a-1}{(rd_c-1)^2}\left(\frac{\partial r}{\partial d_b}d_c\right) \right] - \frac{rd_a-1}{(rd_b-1)^2} r \\
			                                        & = \frac{\partial r}{\partial d_b}d_a \Big(1 + \sum_{c \neq 1}  \frac{1}{rd_c-1} - \frac{rd_c}{(rd_c-1)^2} \Big) + \frac{\partial r}{\partial d_b} \sum_{c \neq 1} \frac{d_c}{(rd_c-1)^2} - \frac{rd_a-1}{(rd_b-1)^2} r           \\
			                                        & = \frac{\partial r}{\partial d_b}d_a\sum_{c \neq 1} \underbrace{\frac{1 + (rd_c-1) - rd_c}{(rd_c-1)^2}}_{=0} + \frac{\partial r}{\partial d_b}\sum_{c \neq 1} \frac{d_c}{(rd_c-1)^2} - \frac{rd_a-1}{(rd_b-1)^2}r                \\
			                                        & = \frac{\partial r}{\partial d_b}\sum_{c \neq 1} \frac{d_c}{(rd_c-1)^2} - \frac{rd_a-1}{(rd_b-1)^2}r
		      \end{align*}
		      (to obtain the third equality, we used that $\sum_{c \neq 1} \frac{1}{(rd_c-1)^2} = 1$ by definition of $r$).

		      It remains to see that $\frac{\partial r}{\partial d_b}$ is nonpositive, that is that $r$ is nondecreasing when $\Delta_b$ increases. In fact, we already noticed that by showing that $r' < r$ in the first part of the proof of Lemma~\ref{lem:moving_one_suboptimal_arm}. Note that one can also use the implicit function theorem to obtain
		      \begin{equation*}	\label{eq:dr}
			      \frac{\partial r}{\partial d_b} = -\frac{r(rd_b-1)^{-3}}{\sum_{c \neq 1} d_c(rd_c-1)^{-3}} < 0 \;.
		      \end{equation*}
		      Hence $\frac{\partial F_a}{\partial d_b} < 0$, so that as ${\Delta_b}' > \Delta_b$:
		      \[ \frac{1}{w_a} = F_a(\vDelta^2) > F_a({\vDelta'}^2) = \frac{1}{w_a'} \quad \text{giving} \quad w_a' > w_a \;. \]

		\item Using Equations~\eqref{eq:link_T_g} and~\eqref{eq:def_g}:
		      \[ {T'}^{-1} = \frac{1}{2} \min_{a \neq 1} \frac{w'_1 w'_a}{w'_1 + w'_a} {\Delta'_a}^{2} \geq \frac{1}{2} \min_{a \neq 1} \frac{w'_1 w'_a}{w'_1 + w'_a} \Delta_a^{2} > \frac{1}{2} \min_{a \neq 1} \frac{w_1 w_a}{w_1 + w_a} \Delta_a^{2} = T^{-1} \;, \]
		      the first inequality comes from the assumption on $\vmu$ and $\vmu'$, and the second is a consequence of the uniqueness of the optimal weight vector $\vw$ and the fact that $\vw \neq \vw'$, as previously obtained.

	\end{enumerate}
	\end{proof}

Before proving Lemmas~\ref{lem:moving_optimal_arm} and~\ref{lem:moving_worst_arms}, we show the following result.

\begin{lemma}\label{lem:scaling} Assume that there exists $\kappa > 0$ such that $\Delta'_a = \kappa\Delta_a$ for any $a \neq 1$. Then ${\vw}' = \vw$.
\end{lemma}

\begin{proof}[Proof of Lemma~\ref{lem:scaling}]
	As $r$ is the unique solution of $\phi_{\vmu}(r) = 0$, one has:
	\[ 0 = \phi_{\vmu}(r) = \sum_{a \neq 1} \frac{1}{(r\Delta_a^2-1)^2} - 1 = \sum_{a \neq 1} \frac{1}{(\frac{r}{\kappa^2}(\kappa\Delta_a)^2-1)^2} - 1 = \sum_{a \neq 1} \frac{1}{(\frac{r}{\kappa^2}{\Delta'_a}^2-1)^2} - 1 = \phi_{\vmu'}\big(\frac{r}{\kappa^2}\big) \]
	and thus $r' = r/\kappa^2$.

	\noindent This implies $r\Delta_a^2 = {r'\Delta'_a}^2$ for any $a \neq 1$, hence $\vw' = \vw$ by Equations~\eqref{eq:link_r_w1} and~\eqref{eq:def_wa} of Proposition~\ref{prop:links_r}.
	\end{proof}

\begin{proof}[Proof of Lemma~\ref{lem:moving_optimal_arm}]
	Let us rescale the gaps of $\vmu'$ to obtain the same maximal gap, by multiplying by constant $\kappa = \frac{\Delta_{\max}}{\Delta_{\max} + d}$. Denoting by $\vmu''$ the obtained bandit, with $\vDelta'' = \vDelta(\vmu'') = \kappa \vDelta'$ and $\vw'' = \vw(\vmu'')$, we have $\vw'' = \vw'$ by Lemma~\ref{lem:scaling}. Let $a$ be (one of) the worst arm of $\vmu$, such that $\Delta_a = \Delta_{\max}$. Then
	\[ \Delta''_{\max} = \Delta''_a = \kappa \Delta'_a = \frac{\Delta_a}{\Delta_a + d}(\Delta_a + d) = \Delta_a = \Delta_{\max} \]
	and for any $b \neq 1$, one has $\Delta_b \leq \Delta_a$ so that the nondecreasing of $x \mapsto \frac{x}{x+d}$ leads to:
	\[ \Delta''_b = \kappa \Delta'_b = \frac{\Delta_a}{\Delta_a + d}(\Delta_b + d) \geq \frac{\Delta_b}{\Delta_b + d}(\Delta_b + d) = \Delta_b \;. \]

	\noindent Now we can apply Lemma~\ref{lem:moving_one_suboptimal_arm} to every arm $b \notin \{ 1,\, a\}$ to go from $\vmu$ to $\vmu''$, and by Point~\ref{lem:moving_one_suboptimal_arm2} we know that those transformations can only increase $w_a$, so that by Corollary~\ref{cor:weights_order}
	\[ w'_{\min} = w'_a = w''_a \geq w_a = w_{\min} \;. \]

	\noindent If in addition there exists an arm $b$ for which $\Delta_b < \Delta_a$, then strict inequality $\Delta_b < \Delta''_b$ occurs in the above inequality and hence Lemma~\ref{lem:moving_one_suboptimal_arm} gives a strict increasing of $w_{\min}$.
	\end{proof}

\begin{proof}[Proof of Lemma~\ref{lem:moving_worst_arms}] 
	Using scaling argument from Lemma~\ref{lem:scaling}, like in the proof of Lemma~\ref{lem:moving_optimal_arm}, we can scale $\vmu'$ to keep gap between arm $1$ and arms of $B$ unchanged. That would increase the gaps of all the other arms which in consequence, using Point~\ref{lem:moving_one_suboptimal_arm2} of Lemma~\ref{lem:moving_one_suboptimal_arm}, would mean that corresponding $w_{\min}$ increases.
	\end{proof}

Finally we can prove that Algorithm~\ref{algo:OptimisticWeights} correctly computes the optimistic bandit.
\begin{proof}[Proof of Proposition~\ref{prop:optimistic_weights}] We stick to the notation of Algorithm~\ref{algo:OptimisticWeights}, and first observe that $\vw = \vw(\tilde{\vmu})$. 
	When $\text{minUB} \geq \text{maxLB}$ the algorithm returns a constant bandit and $\vw = (1/K, \dots, 1/K)$ which is its optimal weight vector by convention. As all weight vectors belong to $\Sigma_K$, the result is clear.

	Now assume that $\text{minUB} < \text{maxLB}$ and fix $\vnu \in \cC \cR$. If $\vnu$ as several optimal arms, then $w_{\min}(\vnu) = 0$ so that trivially $w_{\min}(\vnu) \leq w_{\min}(\tilde{\vmu})$. Assume now that $\vnu$ has a unique optimal arm denoted by $a$. Note that $a \in \text{PotentialBest}$, so that we will show that $w_{\min}(\vnu) \leq w_{\min}(\tilde{\vmu}^{\text{test}(a)})$ by transforming $\vnu$ to $\tilde{\vmu}^{\text{test}(a)}$ with changes that will only increase the quantity of interest $w_{\min}$. Remark that the value of $w_{\min}$ is the vector value associated to any of the worst arms of a bandit due to Corollary~\ref{cor:weights_order}.
	The procedure, illustrated in Figure~\ref{fig:illustration_procedure_proof_prop1}, is the following:
	\begin{enumerate}
		\item Transform $\vnu$ into $\vnu^{(1)}$ by increasing arm $a$ so that $\vnu^{(1)}_a = \ov{\mu}_a$. Using Lemma~\ref{lem:moving_optimal_arm}, one has $w_{\min}(\vnu^{(1)}) \geq w_{\min}(\vnu)$.

		\item Transform $\vnu^{(1)}$ into $\vnu^{(2)}$ by decreasing, for each arm $b \neq a$, $\mu_b$ to $\max(\underline{\mu}_b, \vnu_{\min})$. By several applications of Lemma~\ref{lem:moving_one_suboptimal_arm}, one has $w_{\min}(\vnu^{(2)}) \geq w_{\min}(\vnu^{(1)})$ (remark that imposing to stay above $\vnu_{\min}$ ensures that the associated worst arm stays one of the worst arms at each modification).

		\item Transform $\vnu^{(2)}$ into $\vnu^{(3)}$ by increasing all the worst arms to $\text{minUB}$. By Lemma~\ref{lem:moving_worst_arms}, one has $w_{\min}(\vnu^{(3)}) \geq w_{\min}(\vnu^{(2)})$.
	\end{enumerate}
	We now have $\vnu^{(3)} = \tilde{\vmu}^{\text{test}(a)}$ so that $w_{\min}(\vnu) \leq w_{\min}(\tilde{\vmu}^{\text{test}(a)})$.
	We thus showed that
	\[ \max_{\vnu \in \cC \cR} w_{\min}(\vnu) = \max_{a \in \text{PotentialBest}} w_{\min}(\tilde{\vmu}^{\text{test}(a)}) = w_{\min}(\tilde{\vmu}) \;,\]
	where the last inequality comes from the procedure defining $\tilde{\vmu}$.
	\end{proof}

\begin{figure}
	\centering
	\includegraphics[width=0.24\linewidth]{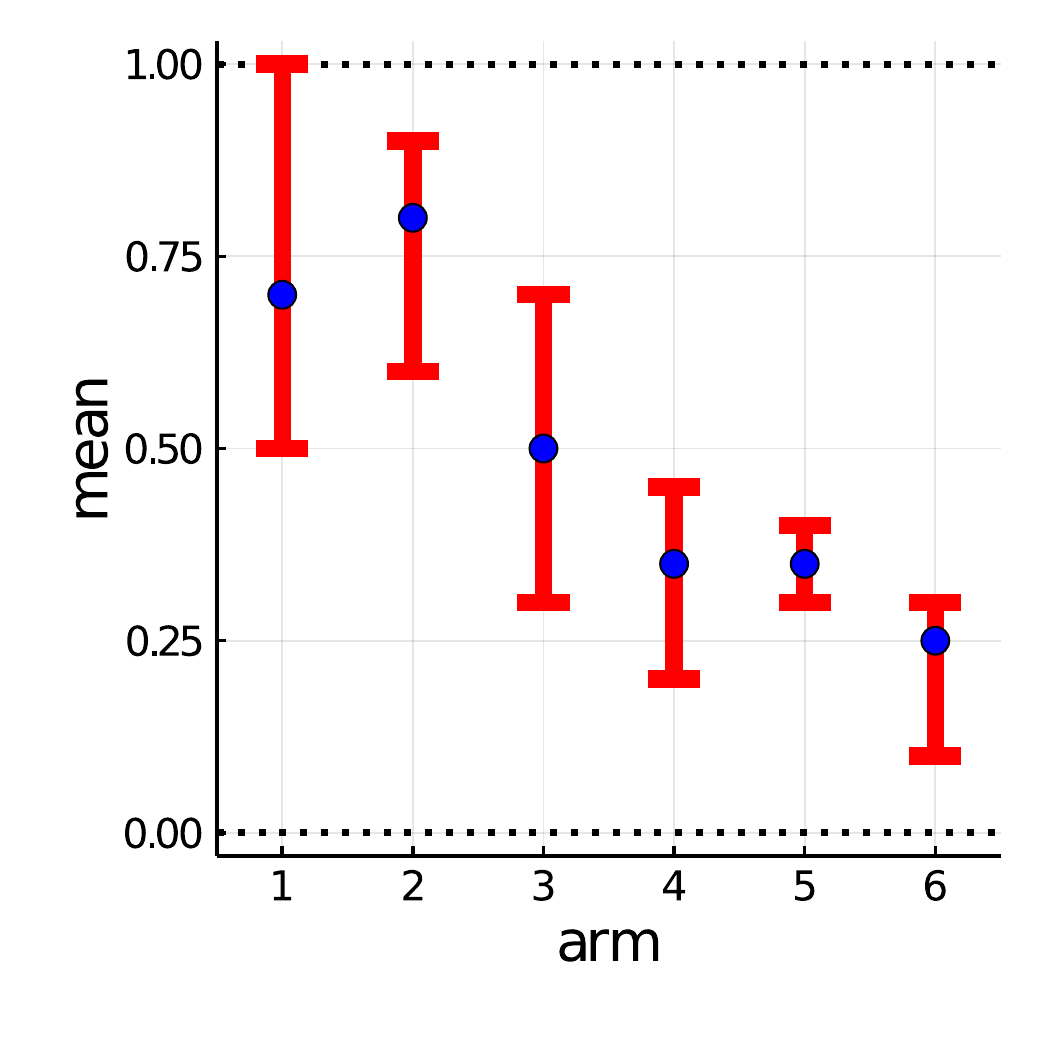}
	\includegraphics[width=0.24\linewidth]{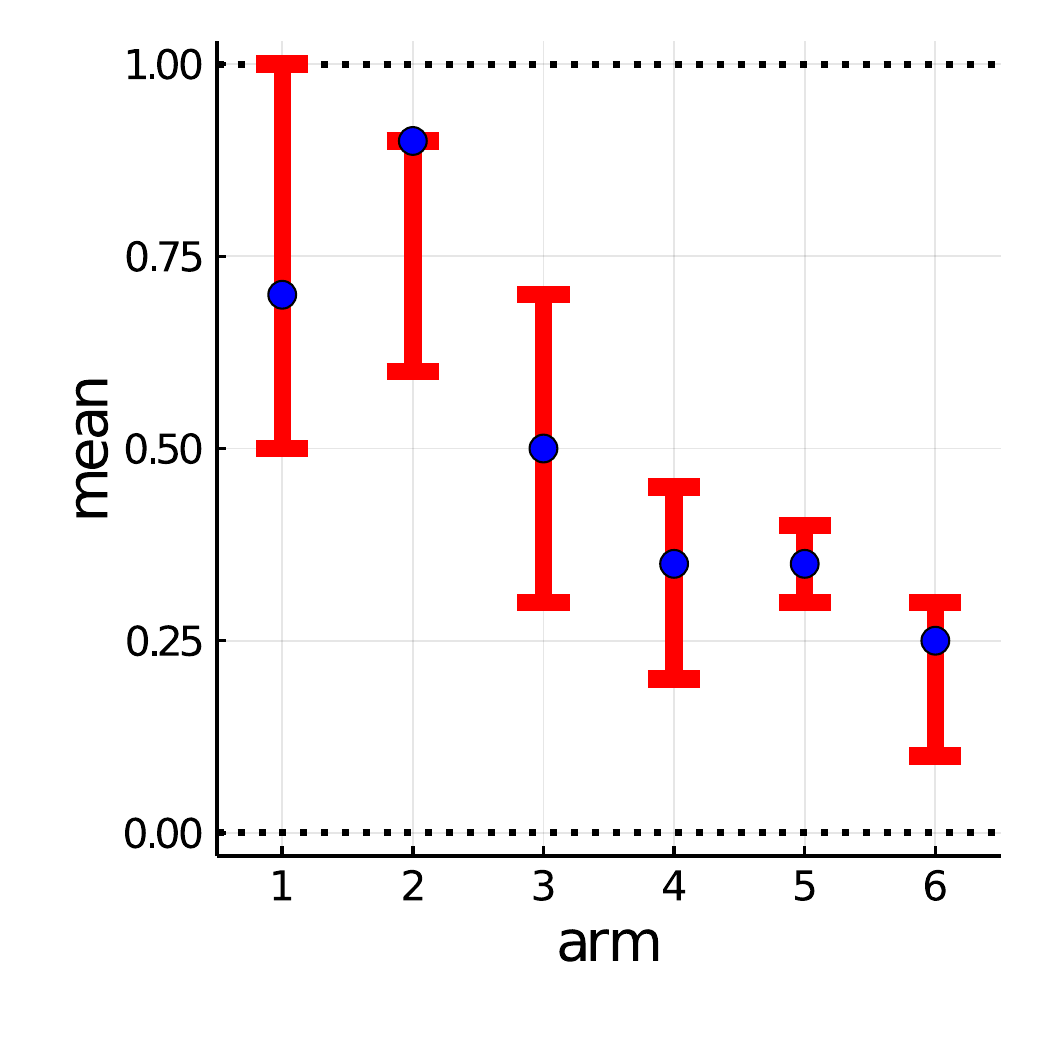}
	\includegraphics[width=0.24\linewidth]{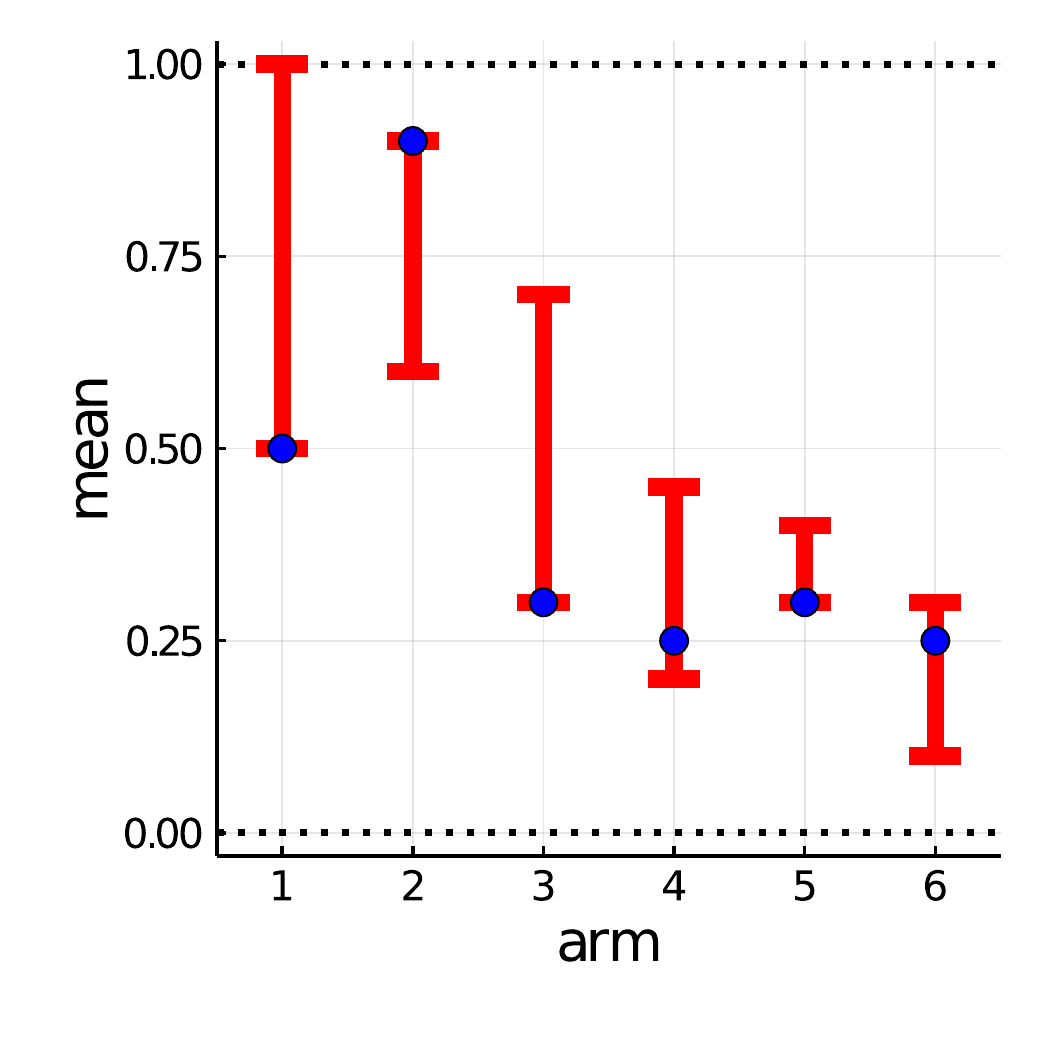}
	\includegraphics[width=0.24\linewidth]{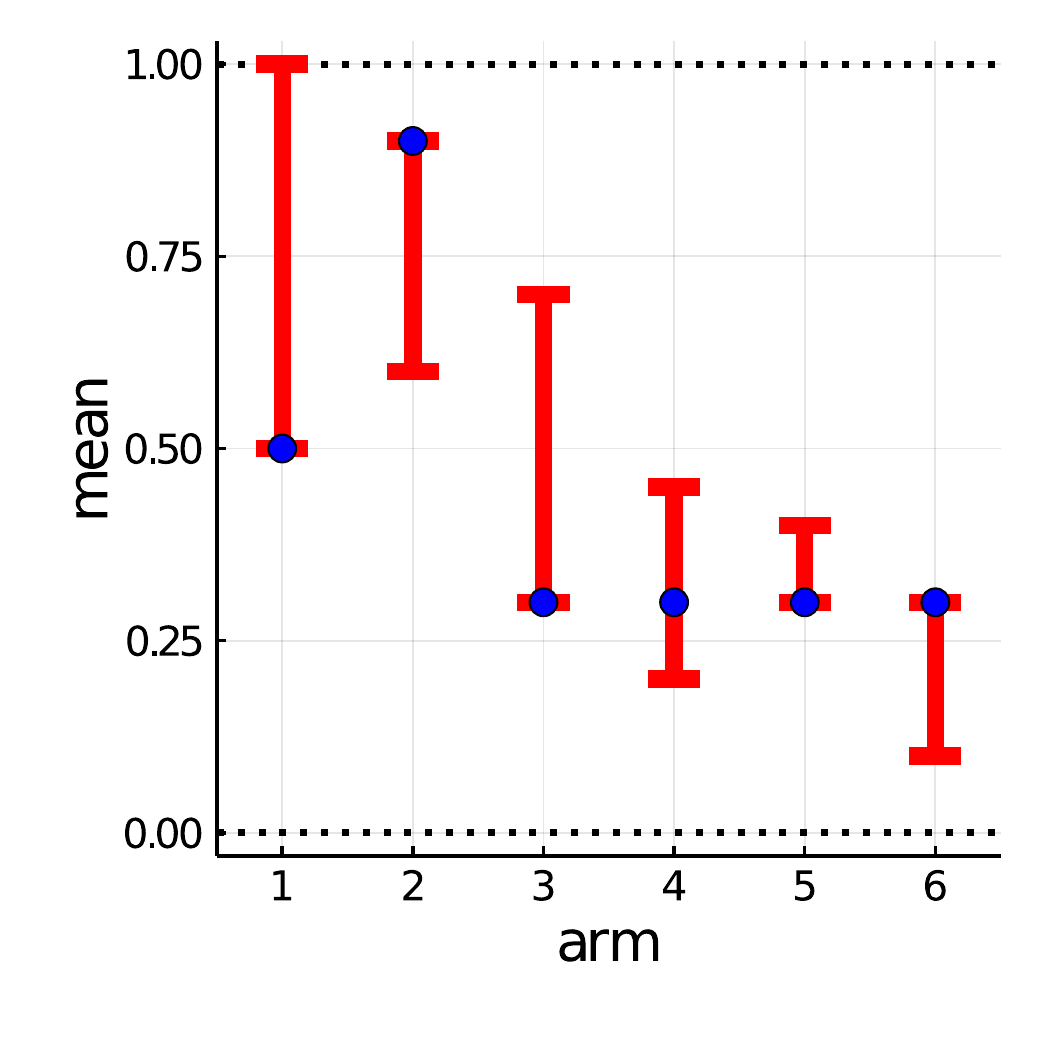}
	\caption{Transformations in the proof of Proposition~\ref{prop:optimistic_weights}, for some instance bandit $\vnu$. From left to right: $\vnu$, $\vnu^{(1)}$, $\vnu^{(2)}$, $\vnu^{(3)} = \tilde{\vmu}^{\text{test}(2)}$	\label{fig:illustration_procedure_proof_prop1}}
\end{figure}

\subsection{Proof of Theorem~\ref{thm:regularity}}

We have that
\[ \phi_{\vmu'}\Big( \frac{r}{1+\varepsilon}\Big) = \sum_{a \neq 1} \frac{1}{\big(\frac{r}{1+\varepsilon}{\Delta'_a}^2 -1\big)^2} - 1\geq \sum_{a \neq 1} \frac{1}{\big(\frac{r}{1+\varepsilon}\Delta_a^2(1+\varepsilon) -1\big)^2} - 1 = \phi_{\vmu}(r) = 0 \]
and
\[ \phi_{\vmu'}\Big( \frac{r}{1-\varepsilon}\Big) = \sum_{a \neq 1} \frac{1}{\big(\frac{r}{1-\varepsilon}{\Delta'_a}^2 -1\big)^2} - 1 \leq \sum_{a \neq 1} \frac{1}{\big(\frac{r}{1-\varepsilon}\Delta_a^2(1-\varepsilon) -1\big)^2} - 1 = \phi_{\vmu}(r) = 0 \]
hence by monotonicity of $\phi_{\vmu'}$ and definition of $r'$:
\[ \frac{r}{1+\varepsilon} \leq r' \leq  \frac{r}{1-\varepsilon} \;. \]
Consequently, for every $a \neq 1$, $r'{\Delta'_a}^2 \leq (1+\eta) r\Delta_a^2$ for $1+\eta = (1+\varepsilon)/(1-\varepsilon)$, and
\begin{align*}\frac{1}{r'{\Delta'_a}^2-1} \geq \frac{1}{\big(r\Delta_a^2-1\big)\big(1+\frac{\eta r\Delta_a^2}{r\Delta_a^2-1}\big)} & \geq \frac{1}{r\Delta_a^2-1}\Big(1- \frac{\eta r\Delta_a^2}{r\Delta_a^2-1}\Big)  = \frac{1}{r\Delta_a^2-1} - \eta\frac{1}{r\Delta_a^2-1} - \eta \frac{1}{(r\Delta_a^2-1)^2}\end{align*}
so that
\begin{align*}
	(w'_1)^{-1} & = 1+ \sum_{a \neq 1} \frac{1}{r'{\Delta'_a}^2-1}                                                                              \\
	            & \geq 1+ (1-\eta) \sum_{a \neq 1} \frac{1}{r\Delta_a^2-1} - \eta \underbrace{\sum_{a \neq 1} \frac{1}{(r\Delta_a^2-1)^2}}_{=1} \\
	            & = (1-\eta) w_1^{-1} = \frac{1-3\varepsilon}{1-\varepsilon}w_1^{-1} \geq (1-3\varepsilon) w_1^{-1} \;.
\end{align*}
Furthermore, $r\Delta_a^2 \geq 2$ (see the lower bound in Inequalities~\eqref{eq:bounds_r} of Proposition~\ref{prop:bounds}), hence $\frac{r \Delta_a^2}{r\Delta_a^2-1} \leq 2$ by decreasing of $x \mapsto \frac{x}{x-1}$ on $(2, +\infty)$. Thus, for every $\eta\leq 1/4$, $u = \eta \frac{r \Delta_a^2}{r\Delta_a^2-1} \leq 1/2$ and $\frac{1}{1-u} \leq 1+2u$. One has $r'{\Delta'_a}^2 \geq (1-\eta) r\Delta_a^2$ for $1-\eta = (1-\varepsilon)/(1+\varepsilon)$, and one checks that $\eta\leq 1/4$ for $\varepsilon\leq 1/7$, hence
\begin{align*}
	\frac{1}{r'{\Delta'_a}^2-1} & \leq \frac{1}{\big(r\Delta_a^2-1\big)\left(1-\frac{\eta r\Delta_a^2}{r\Delta_a^2-1}\right)} \leq \frac{1}{r\Delta_a^2-1} \Big(1 +2 \frac{\eta r\Delta_a^2}{r\Delta_a^2-1}\Big)  = \frac{1}{r\Delta_a^2-1} + 2\eta\frac{1}{r\Delta_a^2-1} +2 \eta \frac{1}{(r\Delta_a^2-1)^2}
\end{align*}
Consequently,
\begin{align*}
	(w'_1)^{-1} & = 1+ \sum_{a \neq 1} \frac{1}{r'{\Delta'_a}^2-1}                                                                                \\
	            & \leq 1+ (1+2\eta) \sum_{a \neq 1} \frac{1}{r\Delta_a^2-1} +2 \eta \underbrace{\sum_{a \neq 1} \frac{1}{(r\Delta_a^2-1)^2}}_{=1} \\
	            & = (1+2\eta) w_1^{-1}  = \frac{1+5\varepsilon}{1+\varepsilon} w_1^{-1} \leq (1+5\varepsilon) w_1^{-1}\;.
\end{align*}

To summarize, for $\varepsilon \leq 1/7$, by Equation~\eqref{eq:link_r_T} of Proposition~\ref{prop:links_r}, on the one hand:
\[ T' = 2r' {w_1'}^{-1} \geq 2 \times \frac{r}{1+\epsilon} \times \frac{1-3\epsilon}{1-\epsilon} {w_1}^{-1} = \frac{1-3\varepsilon}{1+\varepsilon^2} \times T \geq (1-3\varepsilon) T \]
and on the other hand
\[ T' = 2r' (w_1')^{-1} \leq 2 \times \frac{r}{1-\varepsilon} \times \frac{1+5\varepsilon}{1+\varepsilon} w_1^{-1} = \frac{1+5\varepsilon}{1-\varepsilon^2} \times T \leq (1+6\varepsilon) T \]
as $1+5\varepsilon\leq (1+6\varepsilon) (1-\varepsilon^2)$.

\noindent We also have
\[ (1-5\varepsilon)w_1\leq \frac{w_1}{1+5\varepsilon}\leq  w'_1 \leq \frac{w_1}{1-3\varepsilon} \leq (1+6\varepsilon) w_1 \]
which yields by Equation~\eqref{eq:def_wa} of Proposition~\ref{prop:links_r}, for any $a \neq 1$:
\begin{multline*}
	(1-10\varepsilon) w_a \leq \frac{w_1/(1+5\varepsilon)}{(r\Delta_a^2-1) (1+\frac{2\varepsilon}{1+\varepsilon})} \leq w'_a = \frac{w'_1}{r'{\Delta'_a}^2-1} \\
	\leq \frac{w_1/(1-3\varepsilon)}{(r\Delta_a^2-1) (1-2\frac{2\varepsilon}{1+\varepsilon})} = \frac{1+\varepsilon}{(1-3\varepsilon)^2}w_a \leq (1+10\varepsilon) w_a \;.
\end{multline*}

\subsection{Proof of Proposition~\ref{prop:control_g}}

We will prove Proposition~\ref{prop:control_g} by combining two Lemmas. Note that in this section $\vmu$ and $\vmu'$ are general bandits, with possibly more than one best arm.

\begin{lemma} \label{lem:moving_mu} Let $\vmu, \vmu' \in \cG$ and $\vv \in \Sigma_K$ be any optimal vector. Then:
	\[ g(\vmu', \vv) \geq g(\vmu, \vv) - \varepsilon/2 \]
	where $\varepsilon = \norm{\vmu-\vmu'}_\infty$.
\end{lemma}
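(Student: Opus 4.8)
The plan is to read $g(\cdot,\vv)$ as half the square of a weighted distance to the alternative set, and to exploit the triangle inequality for the seminorm $\norm{\vx}_{\vv}^2 = \sum_a v_a x_a^2$. Two elementary facts are recorded at the outset. First, since $\sum_a v_a = 1$ and the means lie in $[0,1]$, one has $\norm{\vmu-\vmu'}_{\vv}^2 = \sum_a v_a(\mu_a-\mu'_a)^2 \leq \varepsilon^2$, so $\norm{\vmu-\vmu'}_{\vv} \leq \varepsilon$. Second, whenever $\vmu\in\cG^*$, Equation~\eqref{eq:def_g} together with $\frac{v_{a^*}v_a}{v_{a^*}+v_a}\leq \tfrac14$ and $\Delta_a\leq 1$ gives $2g(\vmu,\vv)\leq \tfrac14$, i.e. $\sqrt{2g(\vmu,\vv)}\leq \tfrac12$.

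The argument then splits according to whether the best arm is preserved. In the \emph{main case}, $\vmu,\vmu'\in\cG^*$ share the same unique best arm $a^*$, so $\Alt(\vmu)=\Alt(\vmu')$, and for every $\vlambda$ in this common set the triangle inequality gives $\norm{\vmu'-\vlambda}_{\vv}\geq \norm{\vmu-\vlambda}_{\vv}-\varepsilon\geq \sqrt{2g(\vmu,\vv)}-\varepsilon$. Writing $y=\sqrt{2g(\vmu,\vv)}$ and taking the infimum over $\vlambda\in\Alt(\vmu')$, it remains to pass from $\sqrt{2g(\vmu',\vv)}\geq y-\varepsilon$ to the stated quadratic bound. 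If $y\geq\varepsilon$ I would square and use $y\leq\tfrac12$ to get $2g(\vmu',\vv)\geq (y-\varepsilon)^2 = y^2-2\varepsilon y+\varepsilon^2\geq y^2-\varepsilon$, which is exactly $g(\vmu',\vv)\geq g(\vmu,\vv)-\varepsilon/2$; if $y<\varepsilon$ then $g(\vmu,\vv)-\varepsilon/2<(\varepsilon^2-\varepsilon)/2\leq 0\leq g(\vmu',\vv)$ and there is nothing to prove.

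The remaining cases are all handled by showing that the right-hand side is nonpositive, so the inequality holds trivially. If $\vmu\notin\cG^*$ then $\Alt(\vmu)=\cG^*$ and $g(\vmu,\vv)=0$ by approximating $\vmu$ arbitrarily well by elements of $\cG^*$, whence $g(\vmu,\vv)-\varepsilon/2<0$. If instead $\vmu\in\cG^*$ with best arm $a^*$ but $\vmu'$ does not have $a^*$ as its unique best arm, then there is some $b\neq a^*$ with $\mu'_b\geq\mu'_{a^*}$ (the new best arm, or one of the tied arms of $\vmu'$); since $\abs{\mu_c-\mu'_c}\leq\varepsilon$ for all $c$, this forces $\Delta_b=\mu_{a^*}-\mu_b\leq 2\varepsilon$, so $2g(\vmu,\vv)\leq \frac{v_{a^*}v_b}{v_{a^*}+v_b}\Delta_b^2\leq \varepsilon^2$ and again $g(\vmu,\vv)-\varepsilon/2\leq 0$. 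The one genuinely delicate point, and the main obstacle, is precisely this bookkeeping on the alternative set: the triangle inequality only transfers along a \emph{common} $\Alt$, and the heart of the proof is the observation that any change of the optimal arm places $\vmu$ within $2\varepsilon$ of a tie, which caps $g(\vmu,\vv)$ at $\varepsilon^2/2$ and trivializes the bound.
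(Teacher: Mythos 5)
Your proof is correct, and it takes a genuinely different route from the paper's. The paper works directly with the closed form~\eqref{eq:def_g}: when $\vmu$ and $\vmu'$ share a best arm it compares the two minima term by term (choosing $b=a$ inside a $\min$-$\max$), bounds $\abs{\Delta_a^2-{\Delta'_a}^2}\leq 4\varepsilon$, and uses $\frac{v_1v_a}{v_1+v_a}\leq\frac14$, which it justifies via the optimality of $\vv$ through~\eqref{eq:bounds_w1}; when the best arms differ, it interpolates linearly between $\vmu$ and $\vmu'$, noting that $g$ vanishes at every parameter where the optimal arm set changes, so the loss splits as $\frac{(t_1+(1-t_{N-1}))\varepsilon}{2}\leq\frac{\varepsilon}{2}$. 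You instead keep the variational definition, read $\sqrt{2g(\cdot,\vv)}$ as the $\vv$-weighted $L^2$ distance to the alternative set, and apply the triangle inequality along the common set $\Alt(\vmu)=\Alt(\vmu')$; your best-arm-change cases are then trivialized by the near-tie cap $g(\vmu,\vv)\leq\varepsilon^2/2$, which plays the role of the paper's interpolation. Your route buys two things: first, it never uses optimality of $\vv$, since $\frac{v_{a^*}v_a}{v_{a^*}+v_a}\leq\frac{v_{a^*}+v_a}{4}\leq\frac14$ holds for every $\vv\in\Sigma_K$, so you actually prove the lemma for arbitrary $\vv\in\Sigma_K$ (and show the paper's appeal to~\eqref{eq:bounds_w1} is dispensable there too); second, the distance-to-alternative viewpoint does not lean on the Gaussian closed form, so it is the variant most likely to survive non-Gaussian extensions, at the price of the slightly lossy squaring step $(y-\varepsilon)^2\geq y^2-\varepsilon$, which needs both $y\leq\frac12$ and $\varepsilon\leq 1$ --- both granted by means in $[0,1]$, exactly where the paper uses the same boundedness to get $\Delta_a+\Delta'_a\leq 2$. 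Two pedantic caveats, neither affecting correctness: your strict inequalities in the trivial cases should be non-strict to cover $\varepsilon=0$ (where $\vmu=\vmu'$ and the bound holds with equality), and if $v_{a^*}=v_b=0$ the quotient in~\eqref{eq:def_g} is degenerate, but then $g=0$ follows directly from the infimum definition, consistent with your convention $g(\vmu,\vv)=0$ for tied bandits, which matches the one the paper uses in its own second bullet.
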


\begin{proof}
	\phantom{}
	\begin{itemize}
		\item Assume first that $\vmu$ and $\vmu'$ have a common best arm. Without loss of generality we assume that this arm is $1$. Then:
		      \begin{align*}
			      g(\vmu', \vv) - g(\vmu, \vv) & = \frac{1}{2} \min_{a \neq 1} \frac{v_1v_a}{v_1+v_a} {\Delta'_a}^2 - \frac{1}{2} \min_{b \neq 1} \frac{v_1v_b}{v_1+v_b} \Delta_b^2 &  & \text{by Equation~\eqref{eq:def_g}} \\
			                                   & = \frac{1}{2} \min_{a \neq 1} \max_{b \neq 1} \frac{v_1v_a}{v_1+v_a} {\Delta'_a}^2 - \frac{v_1v_b}{v_1+v_b} \Delta_b^2                                                      \\
			                                   & \geq \frac{1}{2} \min_{a \neq 1} \frac{v_1v_a}{v_1+v_a} \big( {\Delta'_a}^2 - \Delta_a^2 \big)                                     &  & \text{taking } b = a \;.
		      \end{align*}

		      Then for any $a \neq 1$, one has:
		      \[ \abs{\Delta_a - \Delta'_a} = \abs{(\mu_1-\mu_1') - (\mu_a-\mu_a')} \leq \abs{\mu_1-\mu_1'} + \abs{\mu_a-\mu_a'} \leq 2\varepsilon \]
		      from which we obtain, using that the gaps are in $[0, 1]$ in $\cG$
		      \[ \abs{\Delta_a^2 - {\Delta'_a}^2} = \abs{\Delta_a - \Delta'_a} (\Delta_a + \Delta_a') \leq 4\varepsilon \;. \]
		      As $\vv$ is an optimal vector, we have $0 \leq v_a \leq v_1 \leq \frac{1}{2}$ using Equation~\eqref{eq:bounds_w1}, so that:
		      \[ \frac{v_1v_a}{v_1+v_a} \leq \frac{1}{2} \frac{v_a}{v_1+v_a} \leq \frac{1}{2} \frac{v_a}{2v_a} = \frac{1}{4} \]
		      hence
		      \[ \frac{v_1v_a}{v_1+v_a} ({\Delta'_a}^2- \Delta_a^2) \geq - \varepsilon  \;. \]

		\item In case $\vmu$ and $\vmu'$ do not share a best arm, define the family of bandits $(\vmu^{(t)})_{t \in [0, 1]}$ by
		      \[ \forall t \in [0, 1], \forall a \in \Segent{K}, \quad \mu^{(t)}_a = (1-t) \mu_a + t \mu'_a  \;. \]
		      One can check that
		      \begin{itemize}
			      \item $\vmu = \vmu^{(0)}$,
			      \item $\vmu' = \vmu^{(1)}$,
			      \item $\norm{\vmu^{(t_1)}-\vmu^{(t_2)}}_\infty \leq \abs{t_1-t_2} \varepsilon$ for every $t_1, t_2 \in [0, 1]$.
		      \end{itemize}
		      Select the subdivision $0 = t_0 < t_1 < \dots < t_N = 1$ of times at which the optimal arms of $\vmu^{(t)}$ are modified. Note that $N \geq 2$ as $\vmu$ and $\vmu'$ do not have a common best arm. Note that by continuity:
		      \begin{itemize}
			      \item for any $n \in \segent{1}{N-1}$, $\vmu^{(t_n)}$ has at least two best arms so that $g(\vmu^{(t_n)}, \vv) = 0$,
			      \item $\vmu^{(1)}$ and $\vmu$ have a common best arm,
			      \item $\vmu^{(N-1)}$ and $\vmu'$ have a common best arm.
		      \end{itemize}
		      Thus
		      \begin{align*}
			      g(\vmu', \vv) - g(\vmu, \vv) & = g(\vmu', \vv) - g(\vmu^{(1)}, \vv) + g(\vmu^{(N-1)}, \vv) - g(\vmu, \vv)        \\
			                                   & \geq - \frac{\norm{\vmu-\vmu^{(1)}}_\infty + \norm{\vmu^{(N-1)}-\vmu'}_\infty}{2} \\
			                                   & \geq - \frac{(t_1 + (1-t_{N-1}) ) \varepsilon}{2} \geq -\frac{\varepsilon}{2} \;.
		      \end{align*}
	\end{itemize}
	\end{proof}

\begin{lemma} \label{lem:moving_w} Let $\vmu' \in \cG$ be a Gaussian bandit and $\vu, \vv \in \Sigma_K$ be such that
	\[ \max_{a \in \Segent{K}} \frac{\abs{u_a-v_a}}{u_a} \leq \eta \]
	for a fixed $0 \leq \eta \leq 1$. Then:
	\[ g(\vmu', \vv) \geq \frac{(1-\eta)^2}{1+\eta} g(\vmu', \vu) \;. \]
\end{lemma}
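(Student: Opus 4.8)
The plan is to reduce everything to the closed-form expression~\eqref{eq:def_g} for $g$ and then compare the two objectives arm by arm. Write $a^* = a^*(\vmu')$ for the optimal arm of $\vmu'$; by Equation~\eqref{eq:def_g},
\[ g(\vmu', \vv) = \frac{1}{2} \min_{a \neq a^*} \frac{v_{a^*} v_a}{v_{a^*} + v_a}\, \Delta_a(\vmu')^2 \;, \]
and the analogous formula holds for $\vu$ with the \emph{same} distinguished arm $a^*$, since $g(\vmu', \cdot)$ depends on the weight vector only through this fixed min over the suboptimal arms of $\vmu'$. The hypothesis $\max_{a \in \Segent{K}} \abs{u_a - v_a}/u_a \leq \eta$ is equivalent to the two-sided estimate $(1-\eta) u_a \leq v_a \leq (1+\eta) u_a$ for every $a \in \Segent{K}$, and this is the only information about $\vv$ that I will use.

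First I would fix a suboptimal arm $a \neq a^*$ and bound the harmonic-type coefficient $\frac{v_{a^*} v_a}{v_{a^*} + v_a}$ from below by estimating its numerator and its denominator separately: the numerator obeys $v_{a^*} v_a \geq (1-\eta)^2 u_{a^*} u_a$, while the denominator obeys $v_{a^*} + v_a \leq (1+\eta)(u_{a^*} + u_a)$. Dividing these two estimates gives
\[ \frac{v_{a^*} v_a}{v_{a^*} + v_a} \geq \frac{(1-\eta)^2}{1+\eta}\, \frac{u_{a^*} u_a}{u_{a^*} + u_a} \;, \]
and the key point is that the multiplicative constant $\frac{(1-\eta)^2}{1+\eta}$ produced this way does not depend on the arm $a$.

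Because this constant is uniform in $a$, it factors straight through the minimum: multiplying the last display by $\Delta_a(\vmu')^2/2$ and taking $\min_{a \neq a^*}$ on both sides yields $g(\vmu', \vv) \geq \frac{(1-\eta)^2}{1+\eta}\, g(\vmu', \vu)$, as claimed. It remains to dispose of the degenerate case where $\vmu'$ has more than one optimal arm: then some gap vanishes, so both sides are $0$ and the inequality is trivial. There is no genuine obstacle here; the only things to get right are to estimate the numerator and the denominator \emph{separately} (note that a homogeneity argument would in fact give the sharper constant $1-\eta$, so the stated bound is safely on the conservative side) so that a single factor can be pulled uniformly out of the minimum, and to observe that the distinguished arm $a^*$ is the same in both min-expressions because it is determined by $\vmu'$ alone.
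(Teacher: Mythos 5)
Your proof is correct and takes essentially the same route as the paper's: both reduce to the closed form~\eqref{eq:def_g}, bound the numerator below by $(1-\eta)^2 u_{a^*}u_a$ and the denominator above by $(1+\eta)(u_{a^*}+u_a)$, and pull the arm-independent factor $\frac{(1-\eta)^2}{1+\eta}$ through the minimum (the paper handles the multiple-optimal-arm case implicitly via a vanishing gap in the min, which you make explicit). Your parenthetical is also accurate: since $(x,y)\mapsto \frac{xy}{x+y}$ is increasing in each argument and homogeneous of degree one, the bounds $v_a \geq (1-\eta)u_a$ alone would give the sharper constant $1-\eta$.
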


\begin{proof} Without loss of generality, assume that arm $1$ is one of the best arms of $\vmu'$. Note that the condition of the lemma can be rewritten as
	\[ \forall a \in \Segent{K}, \quad (1-\eta) u_a \leq v_a \leq (1+\eta) u_a \;. \]
	Then for every $a \neq 1$:
	\[ \frac{v_1v_a}{v_1 + v_a} \geq \frac{(1-\eta)^2 u_1 u_a}{(1+\eta) (u_1 + u_a)} \;. \]
	Thus:
	\[ g(\vmu', \vv) = \min_{a \neq 1} \frac{v_1v_a}{v_1 + v_a} {\Delta_a'}^2 \geq  \frac{(1-\eta)^2}{1+\eta} \min_{a \neq 1} \frac{u_1u_a}{u_1 + u_a} {\Delta'_a}^2 = \frac{(1-\eta)^2}{1+\eta} g(\vmu', \vu) \;. \]
	\end{proof}

\begin{proof}[Proof of Proposition~\ref{prop:control_g}] 
	The result follows directly by applying Lemmas~\ref{lem:moving_w} and~\ref{lem:moving_mu} with $\vu = \vw(\vmu)$:
	\[ g(\vmu', \vv) \geq \frac{(1-\eta)^2}{1+\eta} g(\vmu', \vw(\vmu)) \geq \frac{(1-\eta)^2}{1+\eta} \big( g(\vmu, \vw(\vmu)) - \varepsilon/2 \big) \;. \]
	\end{proof}

%%% APPENDIX D %%%
%\newpage
\section{Proof of the main result}	\label{appendixD} 

%\change{Note that we will require to control the distance between $\tilde{\vmu}(t)$ and $\vmu$, which is possible with high probability for \EBS thanks to Lemma~\ref{lem:CR_mu}. In contrast, the analysis of the \TaS strategy cannot take full advantage of those results. }

The aim of this section is to prove Theorem~\ref{thm:bound}. Let $\gamma \in (0, 1)$ and $\vmu \in \cG^*$. We assume, without loss of generality, that $a^*(\vmu) = 1$. We also write for simplicity $\vDelta = \vDelta(\vmu)$, $\vw = \vw(\vmu)$ and $T = T(\vmu)$.

Recall that the confidence regions are defined, for $t \in \segent{K}{\tau_\delta}$, by
\begin{equation*}
	\textstyle \cC\cR_{\vmu}(t) = \prod_{a \in \Segent{K}} \bigl[\hat{\mu}_a(t) \pm \ell_a(t) \bigr] \;,
\end{equation*}
where $\ell_a(t) = C_{\gamma/K}(N_a(t)) = 2\sqrt{\frac{\log(4KN_a(t)/\gamma)}{N_a(t)}}$. 

Let $\cE$ denotes an event such that $\vmu$ belongs to all confidence regions:
\[ \cE = \bigcap_{t=K}^{\tau_\delta} \big( \vmu \in \cC\cR_{\vmu}(t) \big) \]
and recall that the confidence regions defined by Equation~\eqref{eq:def_CR} are chosen so as to ensure that $\P_{\vmu}(\cE) \geq 1-\gamma$ (see Lemma~\ref{lem:CR_mu}). Furthermore, when $\cE$ occurs, \EBS has been designed so that arms are observed with some minimal linear rate, specified by Lemma~\ref{lem:LB_N} and proved in Appendix~\ref{appendixE1}.
\begin{lemma} \label{lem:LB_N} On event $\cE$ one has:
	\[ \forall t \in \N^*, \quad \min_{a \in \Segent{K}} N_a(t) \geq t w_{\min} - K \; . \]
\end{lemma}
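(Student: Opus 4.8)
The plan is to combine two independent ingredients: a uniform lower bound $w_{\min}$ on every exploration-biased weight (which uses the event $\cE$ and the structural results of Section~\ref{subsec:monotonicity_minmaxpb}), and the purely deterministic bookkeeping of C-tracking, which keeps each count $N_a(t)$ within an additive constant of its cumulative target. I would first dispose of small $t$: since $\tilde{\vw}(s) \in \Sigma_K$ one always has $w_{\min} \le 1/K$, so for $t \le K$ the right-hand side satisfies $t\,w_{\min} - K \le 1 - K \le 0 \le N_a(t)$ and there is nothing to prove. The real content is therefore $t > K$.

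Next I would establish that, on $\cE$, each coordinate of the biased weights is at least $w_{\min}$. For the initialization rounds $s \in \segent{0}{K-1}$ the weights equal $1/K \ge w_{\min}$. For $s \ge K$, the event $\cE$ gives $\vmu \in \cC\cR_{\vmu}(s)$; since $\tilde{\vmu}(s)$ maximizes $w_{\min}$ over $\cC\cR_{\vmu}(s)$ (Equation~\eqref{eq:optimistic_bandit_def}, guaranteed by Proposition~\ref{prop:optimistic_weights}), we obtain $w_{\min}(\tilde{\vmu}(s)) \ge w_{\min}(\vmu) = w_{\min}$, whence every component satisfies $\tilde{w}_a(s) \ge \min_b \tilde{w}_b(s) = w_{\min}(\tilde{\vmu}(s)) \ge w_{\min}$. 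Writing $S_a(t) = \sum_{s \in [t]} \tilde{w}_a(s)$ for the cumulative target used at the decision for $A_{t+1}$, summing $t$ terms each bounded below by $w_{\min}$ yields $S_a(t) \ge t\, w_{\min}$, while $\sum_a S_a(t) = t = \sum_a N_a(t)$.

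The third step is the classical C-tracking bound of~\cite{GK16}, which I would reprove by a short induction. Setting $D_a(t) = S_a(t) - N_a(t)$, one has $\sum_a D_a(t) = 0$ and $A_{t+1} \in \argmax_a D_a(t)$. Starting from the base case $t = K$ (where $D_a(K) = \tilde{w}_a(K) - 1/K \ge -1$) one checks that the invariant $D_a(t) \ge -1$ propagates: for the pulled arm $a_0$ one has $D_{a_0}(t) = \max_b D_b(t) \ge 0$, so $D_{a_0}(t+1) = D_{a_0}(t) + \tilde{w}_{a_0}(t+1) - 1 \ge -1$, and for $a \ne a_0$ the quantity $D_a$ only increases. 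Hence $N_a(t) \le S_a(t) + 1$ for all $a$, and since the $D_b$ sum to zero, $D_a(t) = -\sum_{b \ne a} D_b(t) \le K-1$, giving $N_a(t) = S_a(t) - D_a(t) \ge S_a(t) - (K-1)$. Combining with $S_a(t) \ge t\, w_{\min}$ delivers $N_a(t) \ge t\, w_{\min} - (K-1) \ge t\, w_{\min} - K$ for every $a$, hence for the minimum.

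I expect the only delicate point to be the bookkeeping of indices: the $K$ initial round-robin observations, together with the convention that the selection rule sums $\tilde{w}_a$ over $s \in [t]$ (rather than from $s = 0$), shift the cumulative targets by at most one term. These shifts are harmless because they are absorbed into the additive constant, but the inductive invariant $D_a(t) \ge -1$ must be stated with the matching $S_a(t)$ so that the base case at $t = K$ and the relation $\sum_a D_a(t) = 0$ both hold exactly. Everything else is either the monotonicity fact about $\tilde{\vmu}$ already proven in Section~\ref{subsec:monotonicity_minmaxpb}, or the standard tracking induction above; note in particular that only the weight lower bound $\tilde{w}_a(s) \ge w_{\min}$ relies on $\cE$, while the tracking bound holds surely.
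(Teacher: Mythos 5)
Your proof is correct and follows essentially the same route as the paper's: lower-bound every exploration-biased weight by $w_{\min}(\vmu)$ (uniform weights in the initialization rounds, and Proposition~\ref{prop:optimistic_weights} together with $\vmu \in \cC\cR_{\vmu}(s)$ on $\cE$ afterwards), then convert the cumulative-weight bound into a bound on $N_a(t)$ via the C-tracking deviation $\abs{N_a(t) - \sum_s \tilde{w}_a(s)} \leq K-1$. The only difference is that the paper simply cites \citet[Lemma 15]{GK16} for this tracking bound (Lemma~\ref{lem:control_N}), whereas you reprove it with a correct self-contained induction on the deficits $D_a(t)$ — a harmless, slightly more self-contained variant of the same argument.
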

This inequality directly implies the following lower bound:
\begin{equation}	\label{eq:LB_N}
	\forall t \geq \frac{2K}{w_{\min}} , \quad \min_{a \in \Segent{K}} N_a(t) \geq \frac{t w_{\min}}{2} \; .
\end{equation}

\subsection*{Proof outline}

The proof is organized in 3 steps:
\begin{enumerate}
	\item We first show that, on event $\cE$, the optimal vector $\vw$ and the sampling frequency vector $\vN(t)/t$ are very close for any $t \geq T_1$, where $T_1$ is a (problem-dependent) constant. To do so, we will make use of the regularity results of Section~\ref{subsec:regularity} and the fact that the confidence regions shrink with time. 

	\item Then, we control the event $(\tau_\delta > t) \cap \cE$ for $t > T\log(1/\delta)$ by another event for which we can easily bound the probability using Hoeffding's inequality. This inclusion relies once again on the regularity results of Section~\ref{subsec:regularity} and on conditions on $\delta$, in particular we will require to have $T \log(1/\delta) \geq T_1$ with $T_1$ obtained at Step 1.

	\item Finally, we derive the two bounds of the theorem from Hoeffding's inequality and elementary calculations.
\end{enumerate}
The proof uses some technical lemmas introduced and shown in Appendix~\ref{appendixE}.

\subsection*{\underline{Step 1: controlling the difference between vectors $\vw$ and $\vN(t)/t$}}
\textbf{In this step we assume that event $\cE$ occurs. }\\
Let $t \geq \frac{2K}{w_{\min}}$. Equation~\eqref{eq:LB_N} implies that
\[ \forall a \in \Segent{K}, \quad \ell_a(t) = 2\sqrt{\frac{\log(4N_a(t)K/\gamma)}{N_a(t)}} \leq \sqrt{8 \frac{\log(4tK/\gamma)}{t w_{\min}}} =: L(t) \;. \]
$L(t)$ is an arm-independent bound on the half-length of the confidence interval of each $\mu_a$. In other words, $\norm{\tilde{\vmu}(t) - \vmu}_\infty \leq L(t)$ as we are on event $\cE$.
Note that $L(t)$ is deterministic and goes to $0$ as $t$ goes to $+\infty$. This control of $\norm{\tilde{\vmu}(t) - \vmu}_\infty$ together with Theorem~\ref{thm:regularity} allows to control the difference between $\vw$ and $\tilde{\vw}(t)$ for $t$ large enough, as the following Lemma claims.

\begin{lemma} \label{lem:def_T0} Let
	\begin{equation} \label{eq:def_T0}
		T_0 = \max \bigg( \frac{224^2}{\Delta_{\min}^2 w_{\min}} \log \Big( \frac{2\times 224^2 e K}{\Delta_{\min}^2 w_{\min} \gamma} \Big) , \frac{2K}{w_{\min}} \bigg) \;.
	\end{equation}
	Then for every $t \geq T_0$, one has, introducing $\varepsilon_t = \frac{80L(t)}{\Delta_{\min}}$:
	\begin{equation} \label{eq:control_w_wtilde}
		\forall a \in \Segent{K}, \quad w_a (1-\varepsilon_t) \leq \tilde{w}_a(t) \leq w_a (1+\varepsilon_t)\;.
	\end{equation}
\end{lemma}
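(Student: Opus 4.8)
The plan is to transfer the $L^\infty$-proximity $\norm{\tilde{\vmu}(t)-\vmu}_\infty \le L(t)$, already established on $\cE$ just above the statement, into a multiplicative control on the optimal weights through the regularity bound of Theorem~\ref{thm:regularity}, whose hypothesis is precisely a relative control of the squared gaps. Recalling that $\tilde{\vw}(t) = \vw(\tilde{\vmu}(t))$ by Proposition~\ref{prop:optimistic_weights}, the whole argument reduces to bounding $\abs{\tilde{\Delta}_a(t)^2 - \Delta_a^2}/\Delta_a^2$ uniformly in $a \neq 1$ and then invoking Theorem~\ref{thm:regularity} for the pair $(\vmu, \tilde{\vmu}(t))$.

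The threshold $T_0$ plays two roles, and the first thing I would do is extract both. Since $T_0 \ge \frac{2K}{w_{\min}}$, the minimal-sampling bound~\eqref{eq:LB_N} applies for $t \ge T_0$, which is exactly what produces the uniform half-length bound $\ell_a(t) \le L(t)$ and hence $\norm{\tilde{\vmu}(t)-\vmu}_\infty \le L(t)$. The first term of $T_0$ is engineered so that $t \ge T_0$ forces $L(t) \le \Delta_{\min}/56$, that is $\varepsilon := 8L(t)/\Delta_{\min} \le 1/7$, the admissible range of Theorem~\ref{thm:regularity}. Because $L(t)^2 = 8\log(4Kt/\gamma)/(tw_{\min})$, this requirement reads $\log(4Kt/\gamma)/t \le c$ with $c = \Delta_{\min}^2 w_{\min}/25088$; I would invoke the elementary inversion lemma of Appendix~\ref{appendixE} that turns a lower bound of the form $t \ge \frac{2}{c}\log(\tfrac{\text{const}\cdot e K}{c\gamma})$ into such a control. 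Note that $\frac{2}{c} = 224^2/(\Delta_{\min}^2 w_{\min})$, which reproduces exactly the displayed constant $224^2$ and the factor $e$ inside the logarithm of~\eqref{eq:def_T0}.

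Next I would verify the prerequisite of Theorem~\ref{thm:regularity} that $\tilde{\vmu}(t)$ and $\vmu$ share the same \emph{unique} optimal arm: since $L(t) \le \Delta_{\min}/56 < \Delta_{\min}/2$ and arm $1 = a^*(\vmu)$ leads every other by at least $\Delta_{\min}$, a coordinatewise perturbation of size $L(t)$ cannot dethrone it, so $a^*(\tilde{\vmu}(t)) = 1$. Then $\tilde{\Delta}_a(t) = \tilde{\mu}_1(t) - \tilde{\mu}_a(t)$, and the triangle inequality gives $\abs{\tilde{\Delta}_a(t) - \Delta_a} \le 2L(t)$. Writing $\abs{\tilde{\Delta}_a(t)^2 - \Delta_a^2} = \abs{\tilde{\Delta}_a(t) - \Delta_a}\,(\tilde{\Delta}_a(t) + \Delta_a) \le 2L(t)(2\Delta_a + 2L(t))$, dividing by $\Delta_a^2 \ge \Delta_{\min}^2$, and using $L(t) \le \Delta_{\min}$ to absorb the quadratic term, yields the relative bound $\abs{\tilde{\Delta}_a(t)^2 - \Delta_a^2}/\Delta_a^2 \le 8L(t)/\Delta_{\min} = \varepsilon$.

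Applying Theorem~\ref{thm:regularity} with this $\varepsilon \le 1/7$ then gives $(1-10\varepsilon)w_a \le \tilde{w}_a(t) \le (1+10\varepsilon)w_a$, which is exactly~\eqref{eq:control_w_wtilde} with $\varepsilon_t = 10\varepsilon = 80L(t)/\Delta_{\min}$. I expect the only delicate point to be the bookkeeping in the second paragraph: propagating the target tolerance $\varepsilon \le 1/7$ back through $L(t)$ to the precise constant $224^2$ and the placement of the $e$ in~\eqref{eq:def_T0}, where the technical inversion lemma must be applied carefully rather than relying on the heuristic decay of $t \mapsto \log(t)/t$. Everything else is a direct chaining of the triangle inequality, the minimal-sampling bound, and the regularity theorem.
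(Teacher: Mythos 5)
Your proposal is correct and follows essentially the same route as the paper's proof: the minimal-sampling bound from $t \geq 2K/w_{\min}$ yields the uniform half-length $L(t)$, the separation of arm $1$'s confidence interval guarantees $\tilde{\vmu}(t)$ shares the unique best arm, the relative squared-gap bound $8L(t)/\Delta_{\min} \leq 1/7$ feeds Theorem~\ref{thm:regularity} to produce the factor $10$ in $\varepsilon_t = 80L(t)/\Delta_{\min}$, and Lemma~\ref{lem:logcard_in} inverts $\log(4Kt/\gamma)/t \leq \Delta_{\min}^2 w_{\min}/(2\times 112^2)$ into exactly the first term of $T_0$. Even your constant bookkeeping ($c = \Delta_{\min}^2 w_{\min}/25088$, $2/c = 224^2/(\Delta_{\min}^2 w_{\min})$, and the placement of $e$ in the logarithm) reproduces the paper's computation, including the inherited bound $\norm{\tilde{\vmu}(t)-\vmu}_\infty \leq L(t)$ stated just before the lemma.
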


\begin{proof}
	Let $t \geq \frac{2K}{w_{\min}}$ and assume that $t$ is such that $4L(t) < \Delta_{\min}$. On event $\cE$, one has $\vmu \in \cC \cR_{\vmu}(t) = \prod_{a \in \Segent{K}} [\underline{\mu}_a(t), \overline{\mu}_a(t)]$, hence for any $a \neq 1$:
	\[ \underline{\mu}_1(t) - \overline{\mu}_a(t) \geq \mu_1-2L(t) - (\mu_a+2L(t)) \geq \Delta_a - 4L(t) > 0 \]
	so that the confidence interval for $\mu_1$ is strictly above all other confidence intervals. Hence $\tilde{\vmu}(t)$ has a unique optimal arm which is arm $1$.

	For each arm $a \neq 1$, define $\tilde{\Delta}_a(t) = \Delta_a(\tilde{\vmu}(t)) = \tilde{\mu}_{1}(t) - \tilde{\mu}_a(t)$. Then
	\begin{align*}
		\tilde{\Delta}_a(t)^2                  & \leq (\Delta_a + 2 L(t))^2 = \Delta_a^2 \Big( 1+\frac{4L(t)}{\Delta_a} + \frac{4 L(t)^2}{\Delta_a^2} \Big) \leq \Delta_a^2 \Big( 1+\frac{8L(t)}{\Delta_{\min}} \Big)     \\
		\text{and} \quad	 \tilde{\Delta}_a(t)^2 & \geq (\Delta_a - 2 L(t))^2 = \Delta_a^2 \Big( 1-\frac{4L(t)}{\Delta_a} + \frac{4 L(t)^2}{\Delta_a^2} \Big) \geq \Delta_a^2 \Big( 1-\frac{8L(t)}{\Delta_{\min}} \Big) \;.
	\end{align*}
	If $t$ is such that $\frac{8L(t)}{\Delta_{\min}} \leq 1/7$ (this condition is stronger than $4L(t) < \Delta_{\min}$), we can apply Theorem~\ref{thm:regularity} which gives
	\[ \forall a \in \Segent{K}, \quad w_a (1-\varepsilon_t) \leq \tilde{w}_a(t) \leq w_a (1+\varepsilon_t) \;. \]
	It remains to understand when the condition $\frac{8L(t)}{\Delta_{\min}} \leq 1/7$ holds. We have:
	\[ \frac{8L(t)}{\Delta_{\min}} \leq 1/7 \quad \Longleftrightarrow \quad \frac{\log(4tK/\gamma)}{t} \leq \frac{\Delta_{\min}^2 w_{\min}}{(7 \times 8)^2 \times 8} = \frac{\Delta_{\min}^2 w_{\min}}{2 \times 112^2} \]
	and this inequality is satisfied, by Lemma~\ref{lem:logcard_in}, for
	\[ t \geq \frac{224^2}{\Delta_{\min}^2 w_{\min}} \log \Big( \frac{2\times224^2 e K}{\Delta_{\min}^2 w_{\min} \gamma} \Big) \;. \]
	Combining with the initial condition $t \geq \frac{2K}{w_{\min}}$ leads to the definition of $T_0$.
	\end{proof}

As each $N_a(t)/t$ is nearly the Cesaro sum of the $(\tilde{w}_a(s))_{0 \leq s \leq t-1}$ (see Lemma~\ref{lem:control_N}), and as $\varepsilon_t \to_{t \to +\infty} 0$, we are able to control the difference between $\vw$ and $\vN(t)/t$ after a deterministic time $T_1$.

\begin{lemma} \label{lem:def_T1} Fix $\eta \in (0, 1)$ and let
	\begin{equation} \label{eq:def_T1}
		T_1 = \frac{\max(640^2, 8K)}{\eta^2 \Delta_{\min}^2 w_{\min}^2} \log\Big( \frac{2\times 640^2 e K}{\eta^2 \Delta_{\min}^2 w_{\min}\gamma} \Big)\;.
	\end{equation}
	Then for any $t \geq T_1$ one has:
	\begin{equation} \label{eq:control_w_N}
		\forall a \in \Segent{K}, \quad w_a (1-\eta) \leq \frac{N_a(t)}{t} \leq w_a (1+\eta)\;.
	\end{equation}
\end{lemma}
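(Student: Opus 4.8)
The plan is to reduce everything to a Cesàro-averaging argument. By the C-tracking guarantee of Lemma~\ref{lem:control_N}, the count $N_a(t)$ agrees with the cumulative sum of targeted weights up to an additive error of order $K$, i.e.
\[
\Big| N_a(t) - \sum_{s=0}^{t-1}\tilde{w}_a(s)\Big| \le K \;,
\]
so it suffices to control the average $\frac{1}{t}\sum_{s=0}^{t-1}\tilde{w}_a(s)$ and absorb the $K/t$ error. I would then split the sum at $T_0$: for $s < T_0$ I only use the crude bound $0 \le \tilde{w}_a(s) \le 1$, contributing at most $T_0$ to the sum, while for $s \ge T_0$ I invoke Lemma~\ref{lem:def_T0} to get $w_a(1-\varepsilon_s) \le \tilde{w}_a(s) \le w_a(1+\varepsilon_s)$. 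Summing and dividing by $t$ produces, for both the upper and the lower estimate,
\[
\Big| \frac{N_a(t)}{t} - w_a\Big| \le \frac{T_0 + K}{t} + \frac{w_a}{t}\sum_{s=T_0}^{t-1}\varepsilon_s \;.
\]

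Since $w_a \ge w_{\min}$, it is enough to force each right-hand term below $\tfrac{\eta}{2}w_a$. For the transient term this means $\frac{T_0+K}{t\,w_{\min}} \le \tfrac{\eta}{2}$, i.e. $t \gtrsim \frac{T_0+K}{\eta\, w_{\min}}$; plugging in the value of $T_0$ from \eqref{eq:def_T0} yields a requirement scaling like $\frac{1}{\eta \Delta_{\min}^2 w_{\min}^2}\log(\cdots)$. For the bias term I would exploit that $\varepsilon_s = \frac{80\,L(s)}{\Delta_{\min}}$ is decreasing and behaves like $\sqrt{\log(4sK/\gamma)/s}$, so a comparison of the sum with an integral gives $\frac{1}{t}\sum_{s=T_0}^{t-1}\varepsilon_s \le 2\varepsilon_t$ up to a universal constant. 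The condition then reduces to $\varepsilon_t \lesssim \eta$; squaring and using the definition of $L$ turns this into an inequality of the form $\frac{\log(4tK/\gamma)}{t} \le c\,\eta^2\Delta_{\min}^2 w_{\min}$, which Lemma~\ref{lem:logcard_in} inverts into a lower bound on $t$ scaling like $\frac{1}{\eta^2\Delta_{\min}^2 w_{\min}}\log(\cdots)$.

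Taking $T_1$ as in \eqref{eq:def_T1} then dominates both requirements simultaneously, because $\frac{1}{\eta^2 w_{\min}^2} \ge \max\big(\frac{1}{\eta\, w_{\min}^2},\, \frac{1}{\eta^2 w_{\min}}\big)$ whenever $\eta, w_{\min} \le 1$, while the constant $\max(640^2, 8K)$ is exactly what is needed to cover the two separate contributions (the $640^2$ coming from the bound on $\varepsilon_t$, the $8K$ from the transient term and the $+K$ of C-tracking). The main obstacle is precisely the bias term: one must show that the Cesàro average of the slowly decaying biases $\varepsilon_s$ is itself of order $\varepsilon_t$, rather than dominated by the non-negligible early values near $\varepsilon_{T_0}$, and then track the constants carefully so that the two competing requirements collapse into the single clean expression \eqref{eq:def_T1}. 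A secondary point of care is the bookkeeping of the powers of $\eta$ and $w_{\min}$: the transient term forces the $w_{\min}^2$ in the denominator while the bias term forces the $\eta^2$, and merging them into the stated $\eta^2 w_{\min}^2$ denominator relies on $\eta, w_{\min} \le 1$.
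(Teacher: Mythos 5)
Your proposal matches the paper's proof essentially step for step: the same decomposition via the C-tracking bound of Lemma~\ref{lem:control_N}, the same split of the weight sum at $T_0$ with the crude bound before $T_0$ and Lemma~\ref{lem:def_T0} after, the same sum-to-integral comparison showing $\frac{1}{t}\sum_{s \geq T_0}\varepsilon_s = O(\varepsilon_t)$, the same inversion of $\log(4tK/\gamma)/t \leq c\,\eta^2\Delta_{\min}^2 w_{\min}$ via Lemma~\ref{lem:logcard_in}, and the same merging of the two requirements into $T_1$ using $\eta, w_{\min} \leq 1$. The argument is correct, and your explicit bookkeeping (the $640^2$ from the bias term, the $8K$ from the transient term plus C-tracking slack) is exactly how the paper assembles the constant $\max(640^2, 8K)$.
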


\begin{proof} Let $T_0$ be defined by Equation~\eqref{eq:def_T0}. Let $t > T_0$ and $a \in \Segent{K}$. Equation~\eqref{eq:control_w_wtilde} of Lemma~\ref{lem:def_T0} gives:
	\[ \abs{\sum_{s=0}^{t-1} \tilde{w}_a(s) - tw_a} \leq \sum_{s=0}^{T_0-1} \abs{\tilde{w}_a(s) - w_a} + \sum_{s=T_0}^{t-1} \abs{\tilde{w}_a(s) - w_a} \leq T_0 + w_a \sum_{s=T_0}^{t-1} \varepsilon_s \;. \]
	By definition of $\varepsilon_t$ one has:
	\begin{align*}
		\sum_{s=T_0}^{t-1} \varepsilon_s & = \frac{80 \sqrt{8}}{\Delta_{\min} \sqrt{w_{\min}}} \sum_{s=T_0}^{t-1} \sqrt{\frac{\log(4sK/\gamma)}{s}}  \leq \frac{80 \sqrt{8} \sqrt{\log(4tK/\gamma)}}{\Delta_{\min} \sqrt{w_{\min}}} \sum_{s=T_0}^{t-1} \frac{1}{\sqrt{s}}  \leq \frac{80 \sqrt{8} \sqrt{t \log(4tK/\gamma)}}{\Delta_{\min} \sqrt{w_{\min}}}
	\end{align*}
	so that we have, using Lemma~\ref{lem:control_N}:
	\begin{align*}
		\abs{\frac{N_a(t)}{t} - w_a} & \leq \frac{1}{t} \Bigg[ \Big|N_a(t) - \sum_{s=0}^{t-1} \tilde{w}_a(s)\Big| + \Big|\sum_{s=0}^{t-1} \tilde{w}_a(s) - w_a\Big| \Bigg] \\
		                               & \leq \frac{K+T_0}{t} + w_a \frac{80\sqrt{8} \sqrt{\log(4tK/\gamma)}}{\Delta_{\min} \sqrt{w_{\min} t}}                             \\
		                               & \leq w_a \Bigg( \frac{K+T_0}{tw_{\min}} + \frac{80\sqrt{8} \sqrt{\log(4tK/\gamma)}}{\Delta_{\min} \sqrt{w_{\min} t}} \Bigg) \;.
	\end{align*}
	Thus the conclusion of the Lemma holds when:
	\[ \max \Big( \frac{K+T_0}{tw_{\min}}, \frac{80\sqrt{8} \sqrt{\log(4tK/\gamma)}}{\Delta_{\min} \sqrt{w_{\min} t}} \Big) \leq \frac{\eta}{2} \]
	and this inequality is satisfied, using Lemma~\ref{lem:logcard_in}, when:
	\[ t \geq \max \Big( \frac{2}{\eta} \frac{K+T_0}{w_{\min}}, \frac{640^2}{\eta^2 \Delta_{\min}^2 w_{\min}} \log\Big( \frac{2 \times 640^2 e K}{\eta^2 \Delta_{\min}^2 w_{\min}\gamma} \Big) \Big) \;.\]
	The definition of $T_0$ implies $K + T_0 \leq \frac{4 \max(112^2, K)}{\Delta_{\min}^2 w_{\min}} \log \Big( \frac{2\times 224^2 e K}{\Delta_{\min}^2 w_{\min}\gamma} \Big)$, hence the inequality still holds for
	\[ t \geq \max \Big( \frac{8 \max(112^2, K)}{\eta \Delta_{\min}^2 w_{\min}^2} \log \Big( \frac{2\times 224^2 e K}{\Delta_{\min}^2 w_{\min}\gamma} \Big), \frac{640^2}{\eta^2 \Delta_{\min}^2 w_{\min}} \log\Big( \frac{2 \times 640^2 e K}{\eta^2 \Delta_{\min}^2 w_{\min}\gamma} \Big) \Big) \]
	and $T_1$ is greater than this lower bound. 
	\end{proof}

\subsection*{\underline{Step 2: a useful inclusion of events}}

We want to control the event $(\tau_\delta > t) \cap \cE$ for $t > T\log(1/\delta)$. For $\delta$ small enough, we have the following inclusion of events.
\begin{lemma} 	\label{lem:events_inclusion} Fix $\eta \in (0, 0.15]$ and let $\delta$ be such that
	\begin{equation} \label{cond:C1} \tag{C1}
		T\log(1/\delta) \geq T_1
	\end{equation}
	where $T_1$ is defined by Equation~\eqref{eq:def_T1} and
	\begin{equation} \label{cond:C2} \tag{C2}
		\log(1/\delta) > \frac{4}{\eta} \log\Big(\frac{8eTR^{1/2}}{\eta}\Big) \;.
	\end{equation}
	Then for any $C \in (0, 1]$:
	\[ \forall t \geq (1+C) \frac{(1+\eta)^2}{(1-\eta)^2} T\log(1/\delta), \quad \big(\tau_\delta > t\big) ~\cap~ \cE \quad \subseteq \quad \Big( \norm{\vmu-\hat{\vmu}(t)}_\infty \geq \frac{C}{T}\Big) ~\cap~ \cE \;. \]
\end{lemma}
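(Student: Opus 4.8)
The plan is to argue by contraposition on the event $\cE$. Since both sides of the claimed inclusion are intersected with $\cE$, it suffices to show that, on $\cE$, whenever $\norm{\vmu - \hat{\vmu}(t)}_\infty < C/T$ one has $\tau_\delta \leq t$; and because \EBS stops as soon as $Z(s) > \beta(s,\delta)$, it is enough to establish $Z(t) > \beta(t,\delta)$, which directly contradicts $\tau_\delta > t$. So I fix $t \geq t_0 := (1+C)\frac{(1+\eta)^2}{(1-\eta)^2} T\log(1/\delta)$ and work on $\cE$ with $\varepsilon := \norm{\vmu - \hat{\vmu}(t)}_\infty < C/T$. Note the prefactor of $t_0$ exceeds $1$, so condition~\eqref{cond:C1} forces $t \geq T\log(1/\delta) \geq T_1$, making Lemma~\ref{lem:def_T1} applicable.

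First I would produce a linear-in-$t$ lower bound on $Z(t)$. By~\eqref{eq:link_Z_g}, $Z(t) = t\, g(\hat{\vmu}(t), \vN(t)/t)$. Lemma~\ref{lem:def_T1} gives $\max_a \frac{\abs{w_a - N_a(t)/t}}{w_a} \leq \eta$, so the ratio-deviation parameter appearing in Proposition~\ref{prop:control_g} is at most $\eta$; applying that proposition with $\vmu' = \hat{\vmu}(t)$ and $\vv = \vN(t)/t$, and using $g(\vmu,\vw) = 1/T$ from~\eqref{eq:link_T_g}, the bracket $1/T - \varepsilon/2 > 1/(2T) > 0$ (since $C \leq 1$) together with the decrease of $x \mapsto \frac{(1-x)^2}{1+x}$ on $[0,1]$ lets me replace the true deviation by its bound $\eta$ and obtain
\[ Z(t) \geq t\,\frac{(1-\eta)^2}{1+\eta}\Bigl(\frac{1}{T} - \frac{\varepsilon}{2}\Bigr) > t\,\frac{(1-\eta)^2}{1+\eta}\,\frac{2-C}{2T} =: A\,t \;. \]

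The heart of the argument is then to compare this linear lower bound with $\beta(t,\delta) = \log(1/\delta) + \alpha\log t + \log R$. Since $A\,t$ is linear increasing while $\beta(\cdot,\delta)$ is concave increasing, it suffices to verify $A\,t > \beta(t,\delta)$ at the endpoint $t = t_0$ and to check $A \geq \alpha/t_0$: the derivative $A - \alpha/t$ is then nonnegative on $[t_0, \infty)$, so $t \mapsto A t - \beta(t,\delta)$ is nondecreasing and stays positive for all $t \geq t_0$. A direct computation gives $A\,t_0 = \frac{(2-C)(1+C)}{2}(1+\eta)\log(1/\delta) \geq (1+\eta)\log(1/\delta)$, using $(2-C)(1+C) = 2 + C(1-C) \geq 2$ on $[0,1]$. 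This yields $A\,t_0 \geq \alpha$ at once (as $\log(1/\delta)$ is large by~\eqref{cond:C2} and $\alpha \leq 2$) and reduces the endpoint inequality to the clean requirement $\eta\log(1/\delta) > \alpha\log t_0 + \log R$.

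Finally I would close this last inequality via~\eqref{cond:C2}. Bounding $t_0 \leq 4T\log(1/\delta)$ — this is exactly where $\eta \leq 0.15$ is used, since $(1+C)\frac{(1+\eta)^2}{(1-\eta)^2} \leq 2\times 1.83 < 4$ — and using $\alpha \leq 2$ gives $\alpha\log t_0 + \log R \leq \log\bigl(16 T^2 R\log^2(1/\delta)\bigr)$. It then remains to see that~\eqref{cond:C2} forces $\eta\log(1/\delta) > \log\bigl(16 T^2 R\log^2(1/\delta)\bigr)$; rewriting the target as $\frac{\eta}{2}\log(1/\delta) > \log\bigl(4 T R^{1/2}\log(1/\delta)\bigr)$ puts it in the form $\frac{\log(cx)}{x} \leq b$ with $x = \log(1/\delta)$, $b = \eta/2$, $c = 4TR^{1/2}$, and~\eqref{cond:C2} is precisely $x > \frac{2}{b}\log(ec/b)$, so Lemma~\ref{lem:logcard_in} delivers the bound. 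The main obstacle is this last step: one must carefully track the $\alpha\log t$ term of the threshold and the parasitic $\log\log(1/\delta)$ factor it produces through $t_0$, and verify that condition~\eqref{cond:C2} has been calibrated (through the inversion Lemma~\ref{lem:logcard_in}) precisely so as to absorb them into the slack $\eta\log(1/\delta)$ that survives after the leading $\log(1/\delta)$ terms cancel.
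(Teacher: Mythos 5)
Your proof is correct and takes essentially the same route as the paper's: the same combination of Lemma~\ref{lem:def_T1}, Proposition~\ref{prop:control_g} with Equations~\eqref{eq:link_Z_g} and~\eqref{eq:link_T_g}, the bound $t_0 \leq 4T\log(1/\delta)$, and Lemma~\ref{lem:logcard_in} calibrated exactly by Condition~\eqref{cond:C2}. The only differences are cosmetic: you argue by contraposition and handle uniformity in $t$ via a linear-versus-concave endpoint-plus-slope check where the paper uses the monotonicity of $t \mapsto \beta(t,\delta)/t$, and you make explicit the positivity of the bracket $1/T - \varepsilon/2$ needed to replace the actual frequency deviation by its bound $\eta$ in Proposition~\ref{prop:control_g}, a point the paper leaves implicit.
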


\begin{remark} Latter, we will use this Lemma with $C = \frac{1}{\log^{\frac{1}{3}}(1/\delta)}$. \end{remark}

\begin{proof} Assume in the following that $T \log(1/\delta) \geq T_1$ and let $t \geq T \log(1/\delta)$. By definition of $T_1$ and Lemma~\ref{lem:def_T1}, one has
	\begin{equation} \label{eq:control_W_w}
		\max_{a \in \Segent{K}} \abs{\frac{w_a - N_a(t)/t}{w_a}} \leq \eta \;.
	\end{equation}
	Then using Proposition~\ref{prop:control_g} and Equation~\eqref{eq:link_T_g}:
	\begin{align*}
		\big(\tau_\delta > t\big) ~\cap~ \cE \quad & \subseteq \quad \Big(Z(t) = t g(\hat{\vmu}(t), \vN(t)/t) \leq \beta(t, \delta) \Big) ~\cap~ \cE                                                                  \\
		                                           & \subseteq \quad \Big( t \frac{(1-\eta)^2}{1+\eta} \Big( g(\vmu, \vw) - \frac{\norm{\vmu-\hat{\vmu}(t)}_\infty}{2} \Big) \leq \beta(t, \delta) \Big) ~\cap~ \cE \\
		                                           & \subseteq \quad \Big( \frac{\norm{\vmu-\hat{\vmu}(t)}_\infty}{2} \geq \frac{1}{T} - \frac{1+\eta}{(1-\eta)^2} \frac{\beta(t, \delta)}{t}\Big) ~\cap~ \cE \;.
	\end{align*}
	Consider now
	\[ f(t) = \frac{1+\eta}{(1-\eta)^2} \frac{\beta(t, \delta)}{t} = \frac{1+\eta}{(1-\eta)^2} \frac{\log\big( \frac{Rt^\alpha}{\delta} \big)}{t} \;. \]
	As $\alpha \leq 2$, one can check that $f$ is decreasing on $(4, +\infty)$. Let us show that
	\begin{equation} \label{eq:condition_f_Calpha}
		\forall C \in (0, 1], \quad f\Big((1+C)\frac{(1+\eta)^2}{(1-\eta)^2} T\log(1/\delta)\Big) \leq \frac{1}{(1+C)T} \;.
	\end{equation}
	Fix $C \in (0, 1]$. As $\alpha \leq 2$ and as $\eta \leq 0.15$ is such that $\frac{(1+\eta)^2}{(1-\eta)^2} \leq 2$, we have:
	\begin{align*}
		f\Big((1+C)\frac{(1+\eta)^2}{(1-\eta)^2} T\log(1/\delta)\Big) & \leq \frac{1+\eta}{(1-\eta)^2} \frac{\log\Big( \frac{R (4T \log(1/\delta))^2}{\delta} \Big)}{(1+C)\frac{(1+\eta)^2}{(1-\eta)^2} T\log(1/\delta)} \\
		                                                                & \leq \frac{1}{(1+C)T} \frac{1}{1+\eta} \Big( 1 + 2 \frac{\log\big( 4R^{1/2} T \log(1/\delta) \big)}{\log(1/\delta)} \Big) \;.
	\end{align*}
	hence Inequality~\eqref{eq:condition_f_Calpha} is satisfied if
	\[ \log\big( 4R^{1/2} T \log(1/\delta) \big) \leq \frac{\eta}{2} \log(1/\delta) \]
	which is the case, by Lemma~\ref{lem:logcard_in}, when:
	\[ \log(1/\delta) > \frac{4}{\eta} \log\Big(\frac{8eTR^{1/2}}{\eta}\Big) \;. \]
	Finally when Inequality~\eqref{eq:condition_f_Calpha} holds we have for $t \geq (1+C) \frac{(1+\eta)^2}{(1-\eta)^2} T\log(1/\delta)$:
	\begin{align*}
		\big(\tau_\delta > t\big) ~\cap~ \cE \quad & \subseteq \quad \Big( \norm{\vmu-\hat{\vmu}(t)}_\infty \geq \frac{2}{T} - \frac{2}{(1+C)T}\Big) ~\cap~ \cE \\
		                                           & \subseteq \quad \Big( \norm{\vmu-\hat{\vmu}(t)}_\infty \geq \frac{C}{T}\Big) ~\cap~ \cE
	\end{align*}
	where we use $C \leq 1$ in the last inclusion.
	\end{proof}

\subsection*{\underline{Step 3: bounding $\P_{\vmu}\big(\tau_\delta > t ~\cap~ \cE\big)$ and $\E_{\vmu}[\tau_\delta \ind_\cE]$. }}

Fix $\eta \in (0, 1]$ and assume in the following that conditions~\eqref{cond:C1} and~\eqref{cond:C2} of Lemma~\ref{lem:events_inclusion} are satisfied with $\eta' = \eta/7 \leq 0.15$. We set $\zeta = \frac{(1+\eta')^2}{(1-\eta')^2}$. Let $C \in (0, 1]$, $t > (1+C)\zeta T \log(1/\delta)$ and define
\[ \cE_t = \Big( \norm{\vmu-\hat{\vmu}(t)}_\infty \geq \frac{C}{T}\Big) ~\bigcap~ \cE \;. \]

\noindent Lemmas~\ref{lem:events_inclusion} and~\ref{lem:bad_event_control} -- a consequence of Hoeffding's inequality -- (note that Condition~\eqref{cond:C1} ensures that $t \geq \frac{2K}{w_{\min}}$) give the bound:
\begin{equation} \label{eq:bound_tau>t}
	\P_{\vmu}\big(\tau_\delta > t ~\cap~ \cE\big) \leq \P_{\vmu} (\cE_t) \leq 2Kt\exp \Big( - \frac{tw_{\min}}{4{T}^2} C^2 \Big) \;.
\end{equation}
By taking $C = \frac{1}{\log^{\frac{1}{3}}(1/\delta)}$, we obtained so far that
\[ \forall t > \Big(1 + \frac{1}{\log^{\frac{1}{3}}(1/\delta)} \Big) \zeta T \log(1/\delta), \quad \P_{\vmu}\big(\tau_\delta > t ~\cap~ \cE\big) \leq 2Kt\exp \Big( - \frac{tw_{\min}}{4{T}^2} \frac{1}{\log^{\frac{2}{3}}(1/\delta)} \Big) \]
\underline{giving Bound~\eqref{eq:bound_tail}} as long as $\big( 1 + \frac{1}{\log^{\frac{1}{3}}(1/\delta)} \big) \zeta \leq 1+\eta$. Note that $\zeta \leq 1+6\eta'$ as $\eta' \leq 0.15$ so that when
\begin{equation} \label{cond:C3} \tag{C3}
	\frac{1}{\log^{\frac{1}{3}}(1/\delta)} \leq \frac{\eta'}{2} \quad \Longleftrightarrow \quad \log(1/\delta) \geq \frac{8\times 7^3}{\eta^3}
\end{equation}
the condition holds as
\[ \Big( 1 + \frac{1}{\log^{\frac{1}{3}}(1/\delta)} \Big) \zeta \leq \Big(1+\frac{\eta'}{2}\Big)(1+6\eta') \leq 1+6.6\eta' \leq 1+\eta. \]

\underline{It remains to focus on the bound of $\E_{\vmu}[\tau_\delta \ind_\cE]$.} Using Equation~\eqref{eq:bound_tau>t} we have:
\begin{align*}
	\E_{\vmu}[\tau_\delta \ind_\cE] & = \sum_{t = 0}^{\floor{(1+C)\zeta T\log(1/\delta)}} \P_{\vmu}\big(\tau_\delta > t ~\cap~ \cE\big) + \sum_{t > (1+C)\zeta T\log(1/\delta)} \P_{\vmu}\big(\tau_\delta > t ~\cap~ \cE\big) \\
	                                & \leq (1+C)\zeta T\log(1/\delta) + 1 + 2K \sum_{t > (1+C)\zeta T\log(1/\delta)} t\exp \Big( - \frac{tw_{\min}}{4{T}^2} C^2 \Big) \;.
\end{align*}
Define
\[ S(C) = \sum_{t > C \zeta T\log(1/\delta)} t\exp \Big( - \frac{tw_{\min}}{4{T}^2} C^2 \Big) \;. \]
With some technical calculations (see Appendix~\ref{proof:bound_S(C)}), one can obtain that:
\begin{lemma} \label{lem:bound_S(C)} One has
	\[ S(C) \leq \frac{32 {T}^4}{w_{\min}^2} \exp\big( -\frac{w_{\min}}{4T} C^2 \log(1/\delta) \big)  \Big( \frac{\log(1/\delta)}{C^2} + \frac{1}{C^4}\Big) \;. \]
\end{lemma}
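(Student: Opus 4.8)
The plan is to control $S(C)$ by comparison with a Gaussian-type integral tail. Set $\lambda = \frac{w_{\min}C^2}{4T^2}$ and let $M = \zeta T\log(1/\delta)$ be the lower summation threshold, so that $S(C) = \sum_{t > M} t\,e^{-\lambda t}$. First I would record that the summand $f(t) = t\,e^{-\lambda t}$ has derivative $f'(t) = (1-\lambda t)e^{-\lambda t}$ and is therefore nonincreasing on $[1/\lambda, +\infty)$. The standing hypotheses on $\delta$ (conditions~\eqref{cond:C1}--\eqref{cond:C3}, which force $\log(1/\delta)$ to be large relative to $T/w_{\min}$) guarantee $M \geq 1/\lambda$, so $f$ is nonincreasing on the whole summation range and the series is dominated by the integral $S(C) \leq \int_{M}^{+\infty} t\,e^{-\lambda t}\,\mathrm{d}t$.

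I would then evaluate this integral exactly by one integration by parts, which gives
\[ \int_{M}^{+\infty} t\,e^{-\lambda t}\,\mathrm{d}t = e^{-\lambda M}\Big(\frac{M}{\lambda} + \frac{1}{\lambda^2}\Big). \]
Substituting $\frac{1}{\lambda} = \frac{4T^2}{w_{\min}C^2}$ and $\frac{1}{\lambda^2} = \frac{16T^4}{w_{\min}^2 C^4}$ together with $M = \zeta T\log(1/\delta)$ turns the two prefactor terms into $\frac{4\zeta T^3\log(1/\delta)}{w_{\min}C^2}$ and $\frac{16T^4}{w_{\min}^2 C^4}$, which are exactly the $\frac{\log(1/\delta)}{C^2}$ and $\frac{1}{C^4}$ contributions of the claim. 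The exponent is $\lambda M = \frac{w_{\min}C^2\zeta\log(1/\delta)}{4T}$; since $\zeta \geq 1$, this is at least $\frac{w_{\min}C^2\log(1/\delta)}{4T}$, so that $e^{-\lambda M} \leq \exp\big(-\frac{w_{\min}}{4T}C^2\log(1/\delta)\big)$, matching the stated exponential factor.

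It then remains to collect constants into the announced factor $\frac{32T^4}{w_{\min}^2}$. Using the crude bound $\zeta \leq 2$ (valid because $\eta' \leq 0.15$) and the fact that $\frac{T}{w_{\min}}$ is large, so that $4\zeta \leq \frac{32T}{w_{\min}}$, the term $\frac{4\zeta T^3}{w_{\min}}$ is absorbed into $\frac{32T^4}{w_{\min}^2}$, while $16 \leq 32$ handles the second term; this yields exactly
\[ S(C) \leq \frac{32T^4}{w_{\min}^2}\exp\Big(-\frac{w_{\min}}{4T}C^2\log(1/\delta)\Big)\Big(\frac{\log(1/\delta)}{C^2} + \frac{1}{C^4}\Big). \]
The one genuinely delicate point is the integral-comparison step: I must certify the monotonicity of $f$ over the entire summation range, i.e. $M \geq 1/\lambda$, which is precisely where the lower bounds on $\log(1/\delta)$ imposed by~\eqref{cond:C1}--\eqref{cond:C3} enter, and the constants have to be estimated generously rather than optimally for everything to close at the value $32$.
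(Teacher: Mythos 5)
Your overall strategy --- dominate the tail sum by $\int t\,e^{-\lambda t}\,\textrm{d}t$ and integrate by parts --- is the same as the paper's, and your final constant bookkeeping is arithmetically fine, but there is a genuine gap at exactly the step you flag yourself: the claim that conditions~\eqref{cond:C1}--\eqref{cond:C3} guarantee $M \geq 1/\lambda$. The lemma must hold for an arbitrary $C \in (0,1]$ (it is stated inside Step~3 where $C$ is a free parameter, and is applied with $C = \log^{-1/3}(1/\delta)$), and $1/\lambda = \frac{4T^2}{w_{\min}C^2}$ blows up as $C \to 0$, so no condition on $\delta$ fixed in advance of $C$ can ensure monotonicity of $f(t) = t\,e^{-\lambda t}$ on the whole summation range. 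Even for the specific choice $C = \log^{-1/3}(1/\delta)$, your requirement amounts to $\log(1/\delta) \geq \bigl(\frac{4T}{\zeta w_{\min}}\bigr)^{3}$, which by Proposition~\ref{prop:bounds} can scale like $\Delta_{\min}^{-6}w_{\min}^{-3}$, whereas \eqref{cond:C1} only forces $\log(1/\delta) \geq T_1/T$, of order $w_{\min}^{-2}$ with no matching $\Delta_{\min}$ dependence, and \eqref{cond:C2}--\eqref{cond:C3} are weaker still. So there are instances (small $\Delta_{\min}$) and admissible $\delta$ for which the summation threshold sits strictly \emph{before} the mode of $f$. This is consistent with the theorem itself: the second term of Bound~\eqref{eq:bound_E_tau} is only claimed to vanish as $\delta \to 0$, and for moderate admissible $\delta$ the exponent $\frac{w_{\min}}{4T}\log^{1/3}(1/\delta)$ may be below $1$ --- precisely the pre-peak regime your argument excludes.

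The paper closes this hole with a two-case argument in its notation $A = \frac{w_{\min}}{4T^2}C^2$, $B = (1+C)\zeta T\log(1/\delta)$, $n_0 = \lfloor 1/A \rfloor$: when $B > n_0$ it uses the integral comparison as you do; when $B \leq n_0$ it splits the sum into the increasing part, the two peak terms $f(n_0)$ and $f(n_0+1)$, and the decreasing tail, and verifies separately that $f(n_0) + f(n_0+1) \leq \int_B^{+\infty} f(t)\,\textrm{d}t$, yielding $S(C) \leq 2\int_B^{+\infty} f(t)\,\textrm{d}t$ unconditionally. That factor $2$ is exactly where the constant $32 = 2 \times 16$ comes from; your version only closes numerically because the unjustified monotonicity claim spares you the factor $2$. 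Two minor further points: your threshold $M = \zeta T\log(1/\delta)$ is not the one in the definition of $S(C)$ (which carries the factor $(1+C)$), though since $M \leq (1+C)\zeta T\log(1/\delta)$ and the summand is nonnegative this only loosens the bound harmlessly; and the comparison $\sum_{t > M} f(t) \leq \int_M^{+\infty} f(t)\,\textrm{d}t$ over integers compares the first summand with $\int_{t-1}^{t} f$, so it needs $f$ nonincreasing from $\lfloor M \rfloor$ onward, an off-by-one that $M \geq 1/\lambda$ does not quite cover. The repair is simply to adopt the paper's peak-splitting and accept the factor $2$, which the stated constant already accommodates.
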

Once again, taking $C = \frac{1}{\log^{\frac{1}{3}}(1/\delta)}$ leads to
\begin{align*}
	S(C) & \leq \frac{32 {T}^4}{w_{\min}^2} \exp\Big( -\frac{w_{\min}}{4T} \log^{\frac{1}{3}}(1/\delta) \Big) \Big( \log^{\frac{5}{3}}(1/\delta) + \log^{\frac{4}{3}}(1/\delta) \Big)  \leq \frac{64 {T}^4}{w_{\min}^2} \exp\Big( -\frac{w_{\min}}{4T} \log^{\frac{1}{3}}(1/\delta) \Big) \log^2(1/\delta)
\end{align*}
thus
\[ \E_{\vmu}[\tau_\delta \ind_\cE] \leq \zeta \Big( 1 + \frac{1}{\log^{\frac{1}{3}}(1/\delta)} \Big) T\log(1/\delta) + 1 + \frac{2^7 K {T}^4}{w_{\min}^2} \exp\Big( -\frac{w_{\min}}{4T} \log^{\frac{1}{3}}(1/\delta) \Big) \log^2(1/\delta) \;. \]
Under Condition~\eqref{cond:C3} we get
\[ \zeta \Big( 1 + \frac{1}{\log^{\frac{1}{3}}(1/\delta)} \Big) T\log(1/\delta) + 1 \leq (1+6.6\eta') T \log(1/\delta) + 1 \leq (1+\eta) T \log(1/\delta) \]
\underline{and obtain the Bound~\eqref{eq:bound_E_tau}} claimed in the theorem.
Combining conditions~\eqref{cond:C1},~\eqref{cond:C2} and~\eqref{cond:C3} together, one can define $\delta_0$ satisfying:
\[ \log(1/\delta_0) \geq \frac{7^3 \times \max(2\times 160^2, K)}{\eta^3 \Delta_{\min} w_{\min}^2} \log\Big( \frac{7^2\times 2 \times 640^2 e KR^{1/2}}{\eta^2 \Delta_{\min}^2 w_{\min}\gamma} \Big) \;, \]
with some simplifications allowed by Equation~\eqref{eq:bounds_T} of Proposition~\ref{prop:bounds}.

%%% APPENDIX E %%%
%\newpage
\section{Technical details for the proof of Appendix~\ref{appendixD}}	\label{appendixE}

\subsection{Proof of Lemma~\ref{lem:LB_N}}	\label{appendixE1}

We will use the following deterministic Lemma:
\begin{lemma} \label{lem:control_N} One has:
	\[ \forall t > 0, \quad \max_{1 \leq a \leq K} \Big| N_a(t) - \sum_{s=0}^{t-1} \tilde{w}_a(s) \Big| \leq K-1 \;. \]
\end{lemma}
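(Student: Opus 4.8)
The plan is to prove this C-tracking (cumulative-tracking) estimate by a short induction, following the template of the tracking lemma of \cite{GK16}, but exploiting that here no forced-exploration projection is applied to the targets, which is exactly what lets the constant sharpen from $K$ to $K-1$. Write $D_a(t) = \sum_{s=0}^{t-1}\tilde{w}_a(s) - N_a(t)$ for the signed deficit of arm $a$. Since every $\tilde{\vw}(s)$ lies in $\Sigma_K$, the cumulative targets satisfy $\sum_a \sum_{s=0}^{t-1}\tilde{w}_a(s) = t$, while $\sum_a N_a(t)=t$ because exactly $t$ draws have been made (counting the initial round-robin); hence $\sum_{a} D_a(t)=0$ for every $t$. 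The first step is to reduce the two-sided estimate to the single lower bound $D_a(t) \ge -1$: once this holds for all arms, the zero-sum identity gives $D_a(t) = -\sum_{b\neq a} D_b(t) \le K-1$, so that $-1 \le D_a(t) \le K-1$ and therefore $\lvert D_a(t)\rvert \le K-1$, which is the claim.

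Next I would establish $D_a(t)\ge -1$ by induction on $t$, with the initial round-robin phase (where the targets are uniform and each arm has been pulled at most once) serving as an easy base case. For the inductive step, fix an arm $a$ and distinguish two cases. If $A_{t+1}\neq a$, then $N_a$ is unchanged while the cumulative target can only grow (as $\tilde{w}_a(t)\ge 0$), so $D_a$ does not decrease and the bound is preserved. The only genuine case is $A_{t+1}=a$. Here I use that the sampling rule selects $A_{t+1}\in\argmin_b\big(N_b(t)-\sum_{s=0}^{t}\tilde{w}_b(s)\big)$, i.e. that $a$ maximises $\sum_{s=0}^{t}\tilde{w}_b(s) - N_b(t)$ over $b$. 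Summing this optimality inequality over all $b$ and using $\sum_b\sum_{s=0}^{t}\tilde{w}_b(s) = t+1$ together with $\sum_b N_b(t)=t$ yields $K\big(\sum_{s=0}^{t}\tilde{w}_a(s) - N_a(t)\big) \ge 1$, whence $\sum_{s=0}^{t}\tilde{w}_a(s) - N_a(t) \ge 1/K > 0$. Since $N_a(t+1)=N_a(t)+1$, this gives $D_a(t+1) = \sum_{s=0}^{t}\tilde{w}_a(s) - N_a(t+1) \ge 1/K - 1 \ge -1$, closing the induction.

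The computation itself is elementary; the part demanding care is the index bookkeeping, namely aligning the cumulative target $\sum_{s=0}^{t-1}\tilde{w}_a(s)$ appearing in the statement with the one-step-ahead sum $\sum_{s=0}^{t}\tilde{w}_a(s)$ that the selection rule actually compares against, and verifying that the two counting/simplex identities $\sum_a\sum_{s}\tilde{w}_a(s)=t$ (resp.\ $t+1$) and $\sum_a N_a(t)=t$ hold with the correct ranges once the initial round-robin is accounted for. I expect this indexing to be the main (and essentially only) subtlety; everything else reduces to the sign argument above.
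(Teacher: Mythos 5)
Your proof is correct, and it is essentially the paper's own argument made self-contained: the paper proves this lemma by a one-line citation of \citet[Lemma 15]{GK16} applied to $p(s) = \tilde{\vw}(s)$, and your induction is precisely the standard proof of that cited tracking lemma with signs mirrored — a one-sided bound $D_a(t) \geq 1/K - 1 \geq -1$ for the selected arm via the argmin property, then the zero-sum identity $\sum_a D_a(t) = 0$ to convert it into the two-sided bound $K-1$ — and the indexing subtlety you flag is real but benign (Algorithm~\ref{algo:ExplorationBiased} writes $\sum_{s \in [t]}$ while the lemma uses $\sum_{s=0}^{t-1}$; your convention, where $\tilde{\vw}(t)$ enters the comparison made at step $t$, is the one under which the constant comes out exactly as stated). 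One small inaccuracy in your preamble: the sharpness $K-1$ has nothing to do with the absence of a forced-exploration projection, since the tracking lemma of \citet{GK16} already gives the constant $K-1$ for an arbitrary sequence of targets in $\Sigma_K$, projected or not.
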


\begin{proof} Apply \citet[Lemma 15]{GK16} with $p(s) = \tilde{\vw}(s)$. \end{proof}

The claim is true for $t \in \segent{0}{K}$ as Equation~\eqref{eq:bounds_w1} of Proposition~\ref{prop:bounds} gives
\[ w_{\min}K - K \leq \frac{K}{2} - K \leq 0 \;. \]

Otherwise, fix $t \in \segent{K+1}{\tau_\delta}$ and $a \in \Segent{K}$. For any $s \in \segent{0}{K-1}$, one has $\tilde{w}_a(s) = \frac{1}{K}$ by convention (as all arms are drawn once during the $K$ first rounds, the only request is $\sum_{s=0}^{K-1} \tilde{w}_a(s) = 1$), and thus $\tilde{w}_a(s) \geq w_{\min}$ ($\vw \in \Sigma_K$ implies $w_{\min} \leq \frac{1}{K}$). For any $s \in \segent{K}{\tau_\delta-1}$, one has by Proposition~\ref{prop:optimistic_weights} :
\[ \tilde{w}_a(s) \geq \tilde{w}_{\min}(s) = \max_{\vnu \in \cC \cR_{\vmu}(s)} w_{\min}(\vnu) \geq w_{\min} \]
as $\vmu \in \cC \cR_{\vmu}(s)$ on event $\cE$. Hence by Lemma~\ref{lem:control_N}
\[ N_a(t) \geq \sum_{s=0}^{t-1} \tilde{w}_a(s) - (K-1)  \geq t w_{\min} - (K-1) \geq t w_{\min} - K \;. \]

\subsection{A technical lemma}

\begin{lemma} \label{lem:logcard_in}
	For any $c_1, c_2  > 0$,
	\[ x = \frac{2}{c_1} \log\Big( \frac{c_2 e}{c_1} \Big) \]
	is such that $c_1 x \geq \log(c_2x)$.
\end{lemma}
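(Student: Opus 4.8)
The plan is to collapse the two parameters into a single real variable. Setting $u = \log(c_2/c_1)$, the defining value $x = \frac{2}{c_1}\log(c_2 e/c_1)$ satisfies $c_1 x = 2\log(c_2 e/c_1) = 2(u+1)$, since $\log(c_2 e/c_1) = \log(c_2/c_1) + 1 = u+1$. Likewise,
\[ \log(c_2 x) = \log\Big(\tfrac{2c_2}{c_1}\Big) + \log\big(\log(c_2 e/c_1)\big) = \log 2 + u + \log(u+1) \;. \]
Before anything else I would note that the right-hand side $\log(c_2 x)$ is only defined when $c_2 x > 0$, i.e. (as $c_2>0$) when $x>0$, which is exactly $c_2 e > c_1$, i.e. $u > -1$; this is the only regime in which the claimed inequality is meaningful, and it guarantees $u+1>0$ so that all the logarithms above are well defined.

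Subtracting the two displayed expressions, the target inequality $c_1 x \ge \log(c_2 x)$ becomes, after cancelling one factor of $u$,
\[ u + 2 - \log 2 - \log(u+1) \ge 0 \;. \]
The key step is then the elementary concavity bound $\log(1+u) \le u$, valid for every $u > -1$. Plugging it in gives
\[ u + 2 - \log 2 - \log(u+1) \ge u + 2 - \log 2 - u = 2 - \log 2 > 0 \;, \]
which finishes the proof, and in fact with a uniform slack of $2-\log 2 \approx 1.31$.

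There is no real obstacle here: the statement is elementary once the substitution $u=\log(c_2/c_1)$ is made, after which it reduces to the single-variable inequality $\log(1+u)\le u$. The only points requiring a little care are bookkeeping the algebra of the substitution correctly and observing that the inequality is stated implicitly on the domain $c_2 e > c_1$ where $x>0$ and the logarithm $\log(c_2 x)$ makes sense. The comfortable slack $2-\log 2$ also explains why this crude bound suffices for every invocation of the lemma in the proofs of Appendix~\ref{appendixD}, where only a ``large enough $t$'' conclusion is needed.
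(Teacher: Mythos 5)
Your proof is correct, and it takes a genuinely different route from the paper: the paper does not prove the lemma at all, but dispatches it in one line as ``a direct consequence of \citet[Lemma 18]{GK16}''. Your argument is instead self-contained and elementary. The substitution $u=\log(c_2/c_1)$ is handled correctly ($c_1x=2(u+1)$ and $\log(c_2x)=\log 2+u+\log(u+1)$, valid precisely when $u>-1$, i.e.\ $c_2e>c_1$, which is indeed the only regime where $\log(c_2x)$ makes sense and is the regime in which the paper invokes the lemma, since there $c_2e/c_1$ is always a large constant), and the reduction to $u+2-\log 2-\log(u+1)\geq 0$ together with $\log(1+u)\leq u$ is sound, yielding the uniform slack $2-\log 2$. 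Your cancellation step also isolates the real mechanism: the factor $2$ in $x=\frac{2}{c_1}\log(c_2e/c_1)$ is essential, since it leaves a surviving linear term $u$ to dominate $\log(u+1)$ --- with factor $1$ instead of $2$ the inequality would fail for $u>e-1$, which is exactly why statements of this type in the literature (including the cited Lemma 18) either inflate the constant or add a $\log\log$ correction term. What each approach buys: the paper's citation is shorter and ties the lemma to a standard toolbox result, while your derivation makes the paper self-contained, makes the implicit domain restriction explicit, and quantifies the slack, which indeed confirms that the crude bound is more than sufficient wherever the lemma is applied.
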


This is a direct consequence of \citet[Lemma 18]{GK16}.

\subsection{Deviation bound}

We prove the following simple consequence of Hoeffding's inequality.

\begin{lemma} \label{lem:bad_event_control} For any $t \geq \frac{2K}{w_{\min}}$ and $x > 0$, one has
	\[ \P\Big(\max_{a \in \Segent{K}} \abs{\hat{\mu}_a(t) - \mu_a} > x ~\cap~ \cE \Big) \leq 2K t \exp \Big( - \frac{tw_{\min}}{4} x^2 \Big) \;. \]
\end{lemma}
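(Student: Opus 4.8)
The plan is to reduce the statement to a standard sub-Gaussian deviation bound; the only genuine difficulty is that $\hat{\mu}_a(t)$ is an average over a \emph{random} number $N_a(t)$ of samples, so one cannot apply a concentration inequality to it directly. The role of the event $\cE$ here is precisely to confine this random index to a controllable range from below.

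First I would apply a union bound over the $K$ arms, reducing the task to bounding, for each fixed $a \in \Segent{K}$, the probability $\P\big(\abs{\hat{\mu}_a(t) - \mu_a} > x ~\cap~ \cE\big)$. On $\cE$, since $t \geq \frac{2K}{w_{\min}}$, Equation~\eqref{eq:LB_N} (a consequence of Lemma~\ref{lem:LB_N}) yields $N_a(t) \geq \frac{tw_{\min}}{2}$; combined with the trivial bound $N_a(t) \leq t$, this restricts $N_a(t)$ to the integer range $\segent{\ceil{tw_{\min}/2}}{t}$, which contains at most $t$ values.

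Using the representation $\hat{\mu}_a(t) = \hat{\mu}_{a, N_a(t)}$ of Equation~\eqref{eq:local_estimate} together with a peeling (union) bound over the possible values of the index $N_a(t)$, I would write
\[ \P\Big(\abs{\hat{\mu}_a(t) - \mu_a} > x ~\cap~ \cE\Big) \leq \sum_{n = \ceil{tw_{\min}/2}}^{t} \P\Big(\abs{\hat{\mu}_{a, n} - \mu_a} > x\Big) \;. \]
For each fixed $n$, the average $\hat{\mu}_{a, n}$ of $n$ independent $\cN(\mu_a, 1)$ (more generally, $1$-sub-Gaussian) variables is $\frac{1}{n}$-sub-Gaussian, so Hoeffding's inequality gives $\P\big(\abs{\hat{\mu}_{a, n} - \mu_a} > x\big) \leq 2\exp\big(-\frac{nx^2}{2}\big)$. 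Since every index in the sum satisfies $n \geq \frac{tw_{\min}}{2}$, each summand is at most $2\exp\big(-\frac{tw_{\min}x^2}{4}\big)$, and there are at most $t$ terms.

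Putting the pieces together gives $\P\big(\abs{\hat{\mu}_a(t) - \mu_a} > x ~\cap~ \cE\big) \leq 2t\exp\big(-\frac{tw_{\min}}{4}x^2\big)$, and summing over the $K$ arms produces the claimed bound $2Kt\exp\big(-\frac{tw_{\min}}{4}x^2\big)$. The only step requiring care is the peeling argument: one must ensure that restricting $N_a(t)$ to the window $\big[\frac{tw_{\min}}{2}, t\big]$ is legitimate, and this is exactly what the intersection with $\cE$ guarantees (this is also why the condition $t \geq \frac{2K}{w_{\min}}$ is needed). Everything else is a routine application of Hoeffding's inequality and elementary counting of terms.
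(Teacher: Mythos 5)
Your proof is correct and follows essentially the same route as the paper's: a union bound over arms, a decomposition over the possible values of $N_a(t)$ (confined to $[\frac{tw_{\min}}{2}, t]$ by Lemma~\ref{lem:LB_N} on the event $\cE$, via Equation~\eqref{eq:LB_N}), the representation of Equation~\eqref{eq:local_estimate}, and Hoeffding's inequality applied at each fixed sample count, with each of the at most $t$ terms bounded by the worst one. There is nothing to add; your remark that the intersection with $\cE$ is exactly what legitimizes the restriction of the random index is also how the paper uses the event.
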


\begin{proof} Fix $t \geq \frac{2K}{w_{\min}}$ and $x > 0$. For any $a \in \Segent{K}$, one has with $T= \frac{tw_{\min}}{2}$:
	\begin{align*}
		\P\Big(\abs{\hat{\mu}_a(t) - \mu_a} > x ~\cap~ \cE \Big) & = \sum_{s=T}^{t} \P\Big(\abs{\hat{\mu}_a(t) - \mu_a} > x ~\cap~ \cE ~\cap ~ N_a(t) = s \Big) &  & \text{by Equation~\eqref{eq:LB_N}}             \\
		                                                         & \leq \sum_{s=T}^{t} \P\Big(\abs{\hat{\mu}_{a, s} - \mu_a} > x \Big)                          &  & \text{by Equation~\eqref{eq:local_estimate}} \\
		                                                         & \leq \sum_{s=T}^{t} 2\exp \Big(-\frac{s}{2}x^2\Big)                                          &  & \text{by Hoeffding's inequality}                    \\
		                                                         & \leq 2t \exp\Big(-\frac{T}{2}x^2\Big)
	\end{align*}
	giving the desired bound by union bound.
	\end{proof}

\subsection{Proof of Lemma~\ref{lem:bound_S(C)}} \label{proof:bound_S(C)}

We have
\[ S(C) = \sum_{t > (1+C) \zeta T\log(1/\delta)} t\exp \Big( - \frac{tw_{\min}}{4{T}^2} C^2 \Big) = \sum_{t > B} f(t) \]
where $f: t \mapsto t\exp (-At)$, $A = \frac{w_{\min}}{4{T}^2} C^2$ and $B = (1+C) \zeta T\log(1/\delta)$. $f$ is increasing until $1/A$ and then decreasing. Let $n_0 = \floor{\frac{1}{A}}$. We will show that $S(C) \leq 2\int_B^{+\infty} f(t) ~\textrm{d}t$. 
\begin{itemize}
	\item If $B > n_0$ then $f$ is decreasing on $[B, +\infty[$ and one has $S(C) \leq \int_B^{+\infty} f(t) ~\textrm{d}t$. 

	\item Otherwise, one has:
	      \begin{align*}
		      S(C) & = \sum_{t = \ceil{B}}^{n_0-1} f(t) + f(n_0) + f(n_0+1) + \sum_{t > n_0+1} f(t)                                                         \\
		           & \leq \sum_{t = \ceil{B}}^{n_0-1} \int_{t}^{t+1} f(t) ~\textrm{d}t + f(n_0) + f(n_0+1) + \sum_{t > n_0+1} \int_{t-1}^t f(t)~\textrm{d}t \\
		           & \leq \int_{\ceil{B}}^{+\infty} f(t)~\textrm{d}t + f(n_0) + f(n_0+1)
	      \end{align*}
	      where in the second inequality, we use the increasing of $f$ on $[B, n_0]$ and its decreasing on $[n_0+1, +\infty]$. The result will be true if
	      \[ f(n_0) + f(n_0+1) \leq \int_{B}^{+\infty} f(t) ~\textrm{d}t \;. \]
	      We have:
	      \begin{align*}
		      f(n_0) + f(n_0+1) & = \floor{\frac{1}{A}} e^{-A\floor{\frac{1}{A}}} + \ceil{\frac{1}{A}} e^{-A\ceil{\frac{1}{A}}}                                                           \\
		                        & \leq \Big( \floor{\frac{1}{A}} + \ceil{\frac{1}{A}} \Big) e^{-A\floor{\frac{1}{A}}}                                                                     \\
		                        & \leq \Big( \floor{\frac{1}{A}} \frac{1}{A} + \frac{1}{A^2} \Big) e^{-A\floor{\frac{1}{A}}}         &  & \text{as } A < \frac{1}{2}                      \\
		                        & = \int_{\floor{\frac{1}{A}}}^{+\infty} f(t) ~\textrm{d}t \leq \int_{B}^{+\infty} f(t) ~\textrm{d}t &  & \text{as } B \leq \floor{\frac{1}{A}} = n_0 \;.
	      \end{align*}
	      where in the last inequality, we used the simple calculation
	      \[ \int_Y^{+\infty} t \exp(-tX) \textrm{d}t = \exp(-YX) \Big( \frac{Y}{X} + \frac{1}{X^2}\Big) \]
	      for $X, Y > 0$.
\end{itemize}
In both cases we have:
\[ S(C) \leq 2\int_{(1+C) \zeta T\log(1/\delta)}^\infty t\exp \Big( - \frac{tw_{\min}}{4{T}^2} C^2 \Big) ~\textrm{d}t \]
and using the same calculation as before
\[ S(C) \leq 2\exp\Big( -\frac{\zeta w_{\min}}{4T} (1+C)C^2 \log(1/\delta) \Big)  \Big( \frac{4(1+C)\zeta {T}^3}{w_{\min}} \frac{\log(1/\delta)}{C^2} + \frac{16{T}^4}{w_{\min}^2} \frac{1}{C^4}\Big) \;. \]
Bounding $C \in (0, 1]$ and $\zeta \in [1, 2]$ (remind that $\zeta \leq 1 + 6\eta')$:
\begin{align*}
	S(C) & \leq 2\exp\Big( -\frac{w_{\min}}{4T} C^2 \log(1/\delta) \Big)  \Big( \frac{16 {T}^3}{w_{\min}} \frac{\log(1/\delta)}{C^2} + \frac{16{T}^4}{w_{\min}^2} \frac{1}{C^4}\Big) \\
	     & \leq \frac{32 {T}^4}{w_{\min}^2} \exp\Big( -\frac{w_{\min}}{4T} C^2 \log(1/\delta) \Big)  \Big( \frac{\log(1/\delta)}{C^2} + \frac{1}{C^4}\Big) \;.
\end{align*}

%%% APPENDIX F %%%

\section{Proof of asymptotic results}	\label{appendixF}

\subsection{Proof of Lemma~\ref{lem:LB_exploration_rate}}	\label{appendixF1}

We will need the two following lemmas. The first gives a lower bound of $w_{\min}(\vmu)$ and the second provides a lower bound on the minimal gap of the optimistic bandit computed by Algorithm~\ref{algo:OptimisticWeights}. 

\begin{lemma} \label{lem:bound w_min} For any $\vmu \in \cG^*$ one hase $w_{\min}(\vmu) \geq \frac{\Delta_{\min}(\vmu)}{2K}$. 
\end{lemma}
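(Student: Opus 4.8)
The plan is to collapse $w_{\min}$ into a single ratio controlled by the extreme gaps, and then estimate the numerator and denominator separately using the explicit formulas of Section~\ref{subsec:optimization_problem}. I assume without loss of generality that $a^* = 1$. The case $K = 2$ is immediate, since there $\vw(\vmu) = (1/2, 1/2)$, so that $w_{\min} = 1/2 \geq \frac{\Delta_{\min}^2}{2K}$ as $\Delta_{\min} \leq 1$; hence I focus on $K \geq 3$.

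First I would identify the arm carrying $w_{\min}$. By Corollary~\ref{cor:weights_order} the optimal weights increase with the means, so $w_{a^*} = w_{\max}$ and the least weight is attained at (any) worst arm, i.e. the one with gap $\Delta_{\max}$. Equation~\eqref{eq:def_wa} of Proposition~\ref{prop:links_r} then gives $w_{\min} = \frac{w_{a^*}}{r\Delta_{\max}^2 - 1}$, which reduces the claim to a lower bound on the numerator and an upper bound on the denominator. For the numerator I would invoke Equation~\eqref{eq:bounds_w1}, which yields $w_{a^*} = w_{\max} \geq \frac{1}{1+\sqrt{K-1}}$. For the denominator I would use that gaps lie in $[0,1]$, so $\Delta_{\max} \leq 1$, together with the upper bound $r \leq \frac{1+\sqrt{K-1}}{\Delta_{\min}^2}$ of Equation~\eqref{eq:bounds_r}, giving $r\Delta_{\max}^2 - 1 \leq r \leq \frac{1+\sqrt{K-1}}{\Delta_{\min}^2}$. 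Dividing the two estimates produces $w_{\min} \geq \frac{\Delta_{\min}^2}{(1+\sqrt{K-1})^2}$, and the elementary bound $(1+\sqrt{K-1})^2 \leq 2\bigl(1+(K-1)\bigr) = 2K$ finishes the argument with $w_{\min} \geq \frac{\Delta_{\min}^2}{2K}$.

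The step I expect to require the most care is the denominator bound, because the two estimates from Proposition~\ref{prop:bounds} used for $w_{a^*}$ and for $r$ are never tight simultaneously: equality in the $r$-bound forces all gaps equal (so $\Delta_{\max} = \Delta_{\min}$), precisely the regime where replacing $\Delta_{\max}^2$ by $1$ is most wasteful. It is therefore worth double-checking the combined constant on the extreme configuration of one near-optimal competitor together with the remaining gaps near $1$, where $w_{\min}$ genuinely scales like $\Delta_{\min}^2$. As a consistency check I would also run the alternative derivation that bypasses $w_{a^*}$: at the optimum all pairwise costs in Equation~\eqref{eq:def_g} coincide, so $\frac{2}{T} = \frac{w_{a^*}w_{\min}}{w_{a^*}+w_{\min}}\Delta_{\max}^2 \leq w_{\min}\Delta_{\max}^2$, whence $w_{\min} \geq \frac{2}{T\Delta_{\max}^2}$; inserting $\Delta_{\max} \leq 1$ and the bound $T \leq \frac{2(1+\sqrt{K-1})^2}{\Delta_{\min}^2}$ of Equation~\eqref{eq:bounds_T} reproduces the same constant, confirming the estimate is of the right order.
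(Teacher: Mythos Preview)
Your approach is essentially identical to the paper's: both write $w_{\min} = w_{a^*}/(r\Delta_{\max}^2-1)$ via Equation~\eqref{eq:def_wa}, lower-bound $w_{a^*}$ by Equation~\eqref{eq:bounds_w1}, upper-bound $r$ by Equation~\eqref{eq:bounds_r}, use $\Delta_{\max}\le 1$, and finish with $(1+\sqrt{K-1})^2\le 2K$.

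The one discrepancy is that you arrive at $w_{\min}\ge \Delta_{\min}^2/(2K)$, not the claimed $\Delta_{\min}/(2K)$. This is not a gap in your argument; the stated bound with a bare $\Delta_{\min}$ is actually false. For $K=3$ and means $(1,\,0.99,\,0)$ one has $\Delta_{\min}=0.01$, $r\approx 2/\Delta_{\min}^2=2\cdot 10^4$, hence $w_3=w_1/(r-1)\approx 2.5\times 10^{-5}$, while $\Delta_{\min}/(2K)\approx 1.67\times 10^{-3}$. The paper's proof ``reaches'' the stated bound only through two slips that happen to compensate: it writes $r\Delta_{\max}-1$ in place of $r\Delta_{\max}^2-1$ (contradicting Equation~\eqref{eq:def_wa}), and then bounds $r$ by $(1+\sqrt{K-1})/\Delta_{\min}$ instead of the actual $(1+\sqrt{K-1})/\Delta_{\min}^2$ from Equation~\eqref{eq:bounds_r}. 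With the squares restored the paper's computation coincides with yours and yields $\Delta_{\min}^2/(2K)$, so your version is the correct one. Your alternative derivation through $T$ gives the same constant and is a nice cross-check.
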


\begin{proof} Let $\vw = \vw(\vmu)$, $w_{\min} = w_{\min}(\vmu)$ and $\vDelta = \vDelta(\vmu)$. We have
\begin{align*}
w_{\min} & = \frac{w_{\max}}{r \Delta_{\max} - 1}		&  & \text{by Equation~\eqref{eq:def_wa} of Proposition~\ref{prop:links_r}} \\
	&\geq \frac{1}{\sqrt{K-1}+1} \times \frac{1}{\frac{\sqrt{K-1}+1}{\Delta_{\min}} \Delta_{\max} - 1} &  & \text{by Inequalities~\eqref{eq:bounds_r} and~\eqref{eq:bounds_w1}}    \\
	 & \geq \frac{\Delta_{\min}}{(\sqrt{K-1}+1)^2}		&  & \text{as } \Delta_{\max}(t) \leq 1                             \\
	 & \geq \frac{\Delta_{\min}}{2K} \;.
	      \end{align*}

\end{proof}

\begin{lemma} \label{lem:min_optimistic_Delta} Let $\cC \cR = \prod_{a \in \Segent{K}} [\underline{\mu}_a, \ov{\mu}_a]$ be a confidence region such that $\underline{\mu}_a < \ov{\mu}_a$ for $a \in \Segent{K}$ and $\max_{a \in \Segent{K}} \underline{\mu}_a = \text{maxLB} > \text{minUB} = \min_{a \in \Segent{K}} \ov{\mu}_a$, and $(\tilde{\vmu}, \vv) \gets \text{\textsc{OptimisticWeights}}(\cC\cR)$. Then
\[ \Delta_{\min}(\tilde{\vmu}) \geq \min_{a \in \Segent{K}} \ov{\mu}_a - \underline{\mu}_a \:. \]
\end{lemma}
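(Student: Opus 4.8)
The plan is to exploit that, by Proposition~\ref{prop:optimistic_weights}, the returned bandit $\tilde\vmu$ is one of the candidates $\tilde\vmu^{\text{test}(a)}$ ($a\in\text{PotentialBest}$) and simultaneously maximises $w_{\min}$ over the whole region $\cC\cR$. Since the hypothesis $\text{maxLB}>\text{minUB}$ puts us in the second branch of Algorithm~\ref{algo:OptimisticWeights}, the chosen $\tilde\vmu$ has $\tilde\mu_a=\ov\mu_a$ and $\tilde\mu_c=\max(\underline\mu_c,\text{minUB})$ for $c\neq a$, with $a$ its unique best arm (as $\ov\mu_a>\text{maxLB}$). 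Writing $\ell_c=\ov\mu_c-\underline\mu_c$ and $\ell_{\min}=\min_c\ell_c$ for the quantity on the right-hand side, and letting $b_0\in\argmax_{c\neq a}\tilde\mu_c$ be the second-best arm, one has $\Delta_{\min}(\tilde\vmu)=\ov\mu_a-\tilde\mu_{b_0}$, so the whole statement reduces to showing $\tilde\mu_{b_0}\leq\ov\mu_a-\ell_{\min}$.

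I would first clear the easy cases by elementary inequalities between $\underline\mu,\ov\mu,\text{maxLB},\text{minUB}$. If $\tilde\mu_{b_0}=\text{minUB}$ then every suboptimal arm sits at $\text{minUB}$, which forces $\underline\mu_a=\text{maxLB}$ and hence $\Delta_{\min}(\tilde\vmu)=\text{maxLB}-\text{minUB}+\ell_a>\ell_a\geq\ell_{\min}$. Otherwise $\tilde\mu_{b_0}=\underline\mu_{b_0}>\text{minUB}$, and then either $\ov\mu_a\geq\ov\mu_{b_0}$, giving $\Delta_{\min}(\tilde\vmu)=\ov\mu_a-\underline\mu_{b_0}\geq\ell_{b_0}\geq\ell_{\min}$, or $\underline\mu_a>\underline\mu_{b_0}$, giving $\Delta_{\min}(\tilde\vmu)>\ov\mu_a-\underline\mu_a=\ell_a\geq\ell_{\min}$. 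The only configuration left to treat is $\underline\mu_{b_0}>\text{minUB}$, $\ov\mu_{b_0}>\ov\mu_a$ and $\underline\mu_a\leq\underline\mu_{b_0}$, which I would rule out by contradiction with the maximality of $w_{\min}(\tilde\vmu)$.

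The heart of the argument is to show that in this last configuration the competitor $\tilde\vmu^{\text{test}(b_0)}$ — a legitimate candidate, since $\ov\mu_{b_0}>\ov\mu_a>\text{maxLB}$ gives $b_0\in\text{PotentialBest}$ — has strictly larger $w_{\min}$. I would bridge the two candidates by an auxiliary bandit $\vnu^{+}$ obtained from $\tilde\vmu^{\text{test}(a)}$ by raising its best arm from $\ov\mu_a$ to $\ov\mu_{b_0}$, i.e. adding $D=\ov\mu_{b_0}-\ov\mu_a>0$ to every suboptimal gap; Lemma~\ref{lem:moving_optimal_arm} then yields $w_{\min}(\vnu^{+})\geq w_{\min}(\tilde\vmu^{\text{test}(a)})$. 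The key observation is that $\vnu^{+}$ and $\tilde\vmu^{\text{test}(b_0)}$ carry the same peak $\ov\mu_{b_0}$ and identical heights on all arms $c\neq a,b_0$, and differ only in a single suboptimal arm, whose gap is $\ell_{b_0}=\ov\mu_{b_0}-\underline\mu_{b_0}$ in $\vnu^{+}$ and $\ov\mu_{b_0}-\max(\underline\mu_a,\text{minUB})$ in $\tilde\vmu^{\text{test}(b_0)}$. Under the standing assumptions $\max(\underline\mu_a,\text{minUB})\leq\underline\mu_{b_0}$, so the latter gap is the larger; moreover this moving arm is never the worst one, because the arm $c^{*}=\argmin_c\ov\mu_c$ stays at $\text{minUB}$ in both bandits and satisfies $c^{*}\neq a,b_0$. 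Applying Lemma~\ref{lem:moving_one_suboptimal_arm} to this single-arm move — its part on the \emph{other} suboptimal weights raises the weight of $c^{*}$, which by Corollary~\ref{cor:weights_order} carries $w_{\min}$ — I obtain $w_{\min}(\tilde\vmu^{\text{test}(b_0)})>w_{\min}(\vnu^{+})\geq w_{\min}(\tilde\vmu^{\text{test}(a)})$, the desired contradiction.

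The delicate point I would write out most carefully is this last step: Lemma~\ref{lem:moving_one_suboptimal_arm} compares two bandits sharing the same best arm, whereas $\vnu^{+}$ and $\tilde\vmu^{\text{test}(b_0)}$ have their peak on different indices. The fix is to note, via Proposition~\ref{prop:links_r}, that $\vw$ and hence $w_{\min}$ depend on a bandit only through its multiset of gaps, so the relabelling of the optimal arm is immaterial and the two bandits genuinely differ by a single suboptimal gap. One must also check that after increasing that gap the moved arm does not become the new minimiser (guaranteed since its gap stays $\leq\ov\mu_{b_0}-\text{minUB}$, the gap of $c^{*}$). Everything else is bookkeeping on the interval endpoints.
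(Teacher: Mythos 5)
Your proof is correct in strategy but takes a genuinely different route from the paper's. The paper argues purely by contradiction: assuming $\Delta_{\min}(\tilde{\vmu}) < \min_a (\ov{\mu}_a - \underline{\mu}_a)$, it orders the arms, deduces $\tilde{\mu}_2 > \underline{\mu}_1$, and splits on whether $\tilde{\mu}_2 = \underline{\mu}_2$ or $\tilde{\mu}_2 = \text{minUB}$; in the first case it deduces $\ov{\mu}_2 > \ov{\mu}_1$ and builds the single competitor $\vnu = (\tilde{\mu}_2, \ov{\mu}_2, \tilde{\mu}_3, \dots, \tilde{\mu}_K)$, which stays \emph{inside} $\cC\cR$ (the former best arm is lowered to $\underline{\mu}_2$ while arm $2$ rises to $\ov{\mu}_2$, so every gap grows by $\ov{\mu}_2 - \ov{\mu}_1$), and needs only Lemma~\ref{lem:moving_optimal_arm} together with the maximality from Proposition~\ref{prop:optimistic_weights}; in the second case it contradicts $\text{maxLB} > \text{minUB}$ outright. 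You instead verify the claimed inequality directly by endpoint comparisons in all but one configuration, and only in the residual configuration resort to contradiction, bridging through the out-of-region bandit $\vnu^{+}$ and a single-suboptimal-arm move onto the legitimate candidate $\tilde{\vmu}^{\text{test}(b_0)}$. This costs you an extra lemma (Lemma~\ref{lem:moving_one_suboptimal_arm}) and the relabelling remark (which is valid: by Proposition~\ref{prop:links_r} the weights depend only on the gap multiset), whereas the paper's arm-swap competitor achieves the same contradiction in one step; on the other hand your direct cases show explicitly which interval endpoints realize the bound, and your supporting checks ($b_0 \in \text{PotentialBest}$, $c^* \notin \{a, b_0\}$, the moved arm never overtaking $c^*$ as the worst arm, so that $w_{\min}$ indeed sits at $c^*$ before and after) are all sound.

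There is, however, one boundary slip. Your residual configuration is $\underline{\mu}_a \leq \underline{\mu}_{b_0}$ (the negation of your strict case $\underline{\mu}_a > \underline{\mu}_{b_0}$), yet you then assert that the moved gap is \emph{strictly} larger, having only established $\max(\underline{\mu}_a, \text{minUB}) \leq \underline{\mu}_{b_0}$. At the boundary $\underline{\mu}_a = \underline{\mu}_{b_0}$ the hypothesis $\Delta'_b > \Delta_b$ of Lemma~\ref{lem:moving_one_suboptimal_arm} fails: $\vnu^{+}$ and $\tilde{\vmu}^{\text{test}(b_0)}$ then have identical gap multisets, hence equal $w_{\min}$, and your strict chain $w_{\min}(\tilde{\vmu}^{\text{test}(b_0)}) > w_{\min}(\vnu^{+}) \geq w_{\min}(\tilde{\vmu})$ yields no contradiction. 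The fix is one line: weaken your second direct case to $\underline{\mu}_a \geq \underline{\mu}_{b_0}$, where the conclusion holds directly since $\Delta_{\min}(\tilde{\vmu}) = \ov{\mu}_a - \underline{\mu}_{b_0} \geq \ov{\mu}_a - \underline{\mu}_a \geq \min_c (\ov{\mu}_c - \underline{\mu}_c)$; the contradiction case then carries the strict inequality $\underline{\mu}_a < \underline{\mu}_{b_0}$ and Lemma~\ref{lem:moving_one_suboptimal_arm} applies exactly as you intend. (Alternatively, keep your split and obtain strictness from the other step: in that configuration the suboptimal values of $\tilde{\vmu}$ are not all equal, since $\tilde{\mu}_{b_0} = \underline{\mu}_{b_0} > \text{minUB} = \tilde{\mu}_{c^*}$, so the strictness clause of Lemma~\ref{lem:moving_optimal_arm} already gives $w_{\min}(\vnu^{+}) > w_{\min}(\tilde{\vmu})$.)
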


\begin{proof} We proceed by contradiction: let us assume that $\tilde{\vmu}$ is such that
	\[ \Delta_{\min}(\tilde{\vmu}) < \min_{a \in \Segent{K}} \ov{\mu}_a - \underline{\mu}_a \:. \]
	By the two hypothesis and the algorithm's procedure, it is clear that $\tilde{\vmu}$ has a unique best arm. Without loss of generality let us arrange the arms so that $\tilde{\mu}_1 > \tilde{\mu}_2 \geq \tilde{\mu}_3 \geq \dots \geq \tilde{\mu}_K$. Note that $\Delta_{\min}(\tilde{\vmu}) = \tilde{\mu}_1-\tilde{\mu}_2$.

	As $1$ is the best arm, once again the algorithm's procedure ensures that $\tilde{\mu}_1 = \ov{\mu}_1$. In addition, our assumption implies $\Delta_{\min}(\tilde{\vmu}) < \ov{\mu}_1 - \underline{\mu}_1$, giving $\tilde{\mu}_2 > \underline{\mu}_1$. Recall that $\tilde{\mu}_2 = \max(\underline{\mu}_2, \text{minUB})$, so that we split our analysis to the two possible cases:
	\begin{itemize}
		\item if $\tilde{\mu}_2 = \underline{\mu}_2$, then we cannot have $\ov{\mu}_2 \leq \ov{\mu}_1 = \tilde{\mu}_1$ otherwise $\Delta_{\min}(\tilde{\vmu}) > \ov{\mu}_2 - \underline{\mu}_2$, which is impossible.

		      Then $\ov{\mu}_2 > \ov{\mu}_1$. By defining $\vnu = (\tilde{\mu}_2, \ov{\mu}_2, \tilde{\mu}_3, \dots, \tilde{\mu}_K)$, one has $\vnu \in \cC \cR$ and $w_{\min}(\vnu) > w_{\min}(\tilde{\vmu})$ by Lemma~\ref{lem:moving_optimal_arm}. Thus $\tilde{\vmu}$ cannot maximize $w_{\min}$ over $\cC \cR$ which is in contradiction with Proposition~\ref{prop:optimistic_weights}.

		\item if $\tilde{\mu}_2 = \text{minUB}$, then $\tilde{\mu}_2 = \tilde{\mu}_3 = \dots = \tilde{\mu}_K$ and thus all confidence intervals share a common point equal to $\tilde{\mu}_2$ (recall that $\tilde{\mu}_2 \in [\underline{\mu}_1, \ov{\mu}_1]$), which is a contradiction with $\text{maxLB} > \text{minUB}$.
	\end{itemize}
\end{proof}

We can now prove Lemma~\ref{lem:LB_exploration_rate}. Let $t \in \segent{0}{\tau_\delta-1}$. We want to lower bound $\tilde{w}_{\min}(t)$.
\begin{itemize}
	\item If at time $t$ one has $\tilde{\vw}(t) = (1/K, \dots, 1/K)$ then $\tilde{w}_{\min}(t) = \frac{1}{K}$.

	\item Otherwise, by construction of Algorithms~\ref{algo:OptimisticWeights} and~\ref{algo:ExplorationBiased} we know that $t \geq K$ and the confidence region $\cC \cR (t)$ is such that at least two confidence intervals are separated. In that case, the optimistic bandit $\tilde{\vmu}(t)$ has a unique optimal arm and Lemma \ref{lem:bound w_min} gives 
	\[ \tilde{w}_{\min}(t) \geq \frac{\tilde{\Delta}_{\min} (t)}{2K} \;. \]
	      One can use Lemma~\ref{lem:min_optimistic_Delta} and note that as $t \geq K$, all arms have already been pulled at least once, hence
	      \[ \tilde{\Delta}_{\min}^{(t)} \geq \min_{a \in \Segent{K}} 2 \ell_a(t) \geq 4 \min_{a \in \Segent{K}} \sqrt{\frac{\log(4N_a(t)K/\gamma)}{N_a(t)}} \geq 4\sqrt{\frac{\log(4K/\gamma)}{t}} \geq 4\sqrt{\frac{\log 8}{t}} \geq \frac{4}{\sqrt{t}} \;. \]

	      Putting everything together one can obtain
	      \[ \tilde{w}_{\min}(t) \geq \frac{2}{K} \frac{1}{\sqrt{t}} \;. \]
\end{itemize}
In both cases we obtained:
\[ \tilde{w}_{\min}(t) \geq \min \Big( \frac{2}{K} \frac{1}{\sqrt{t}}, \frac{1}{K} \Big) \geq \frac{1}{K} \frac{1}{\sqrt{t}} \]
hence for any $a \in \Segent{K}$ and $t \in \N$, we have using Lemma~\ref{lem:control_N}:
\[ N_a(t) \geq \sum_{s=0}^{t-1} \tilde{w}_a(s) - (K-1) \geq \sum_{s=2}^{t-1} \tilde{w}_{\min}(s) - K \geq \frac{1}{K} \sum_{s=2}^{t-1} \frac{1}{\sqrt{s}} - K \geq \frac{1}{K} \int_1^t \frac{1}{\sqrt{s}} \; \textrm{d}s - K \geq \frac{2}{K} \sqrt{t} - K \;. \]

\subsection{Almost sure asymptotic bound}	\label{appendixF2}

\begin{theorem}[Almost sure asymptotic bound] \label{thm:asymptotic_optimality_as} Fix $\gamma \in (0, 1)$, $\alpha \in [1, e/2]$. For any $\vmu \in \cG^*$, Algorithm \EBS with the threshold of Equation~\eqref{eq:threshold} satisfies
\[ \limsup_{\delta \to 0} \frac{\tau_{\delta}}{\log(1/\delta)} \leq \alpha T(\vmu) \quad \P_{\vmu}\text{-a.s.} \;. \]
\end{theorem}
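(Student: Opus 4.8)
The plan is to follow the classical route to almost-sure asymptotic optimality for Track-and-Stop-type algorithms (as in~\citet{GK16}), adapted to the unforced exploration of \EBS. The heart of the matter is to show that $Z(t)/t \to T(\vmu)^{-1}$ almost surely, after which the bound is read off from when the statistic crosses the stopping boundary $\beta(t,\delta)$. The first ingredient is the natural exploration rate: Lemma~\ref{lem:LB_exploration_rate} holds for every sample path and gives $N_a(t)\geq \frac{2}{K}\sqrt{t}-K$, so $N_a(t)\to+\infty$ for every arm $a$. Since by Equation~\eqref{eq:local_estimate} $\hat{\mu}_a(t)=\hat{\mu}_{a,N_a(t)}$ is a running average of i.i.d.\ $\cN(\mu_a,1)$ variables, the strong law of large numbers yields $\hat{\mu}_a(t)\to\mu_a$ almost surely; intersecting over the $K$ arms, $\hat{\vmu}(t)\to\vmu$ on an event of probability one. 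Unlike the non-asymptotic proof, this step requires no conditioning on $\cE$: the deterministic lower bound on $N_a(t)$ suffices.

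On this probability-one event I would then propagate the convergence to the quantities driving the algorithm. The confidence half-widths $C_{\gamma/K}(N_a(t))$ tend to $0$ because $N_a(t)\to\infty$, so the region $\cC\cR_{\vmu}(t)$, centered at $\hat{\vmu}(t)$, shrinks to $\{\vmu\}$; as $\tilde{\vmu}(t)\in\cC\cR_{\vmu}(t)$ (Proposition~\ref{prop:optimistic_weights}), it follows that $\tilde{\vmu}(t)\to\vmu$. Because $\vmu\in\cG^*$ has a unique best arm with $\Delta_{\min}(\vmu)>0$, the bandit $\tilde{\vmu}(t)$ eventually lies in $\cG^*$, and the continuity of $\vnu\mapsto\vw(\vnu)$ at $\vmu$ (quantified by Theorem~\ref{thm:regularity}) gives $\tilde{\vw}(t)=\vw(\tilde{\vmu}(t))\to\vw(\vmu)$. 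Plugging this into the tracking estimate of Lemma~\ref{lem:control_N}, namely $\bigl|N_a(t)-\sum_{s=0}^{t-1}\tilde{w}_a(s)\bigr|\leq K-1$, a Cesàro argument yields $\vN(t)/t\to\vw(\vmu)$. Finally, Equation~\eqref{eq:link_Z_g} together with the continuity of $g$ (explicit from Equation~\eqref{eq:def_g}) gives $Z(t)/t=g\bigl(\hat{\vmu}(t),\vN(t)/t\bigr)\to g(\vmu,\vw(\vmu))=T(\vmu)^{-1}$ by Equation~\eqref{eq:link_T_g}.

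The last step is the threshold-crossing analysis. Fix $\epsilon\in(0,1)$; almost surely there is a random $t_\epsilon$ with $Z(t)\geq (1-\epsilon)\,t/T(\vmu)$ for all $t\geq t_\epsilon$. Since $\tau_\delta$ is the first time $Z(t)>\beta(t,\delta)$, we get $\tau_\delta\leq \inf\{t\geq t_\epsilon : (1-\epsilon)\,t/T(\vmu)>\beta(t,\delta)\}$. Writing $\beta(t,\delta)=\log(Rt^\alpha/\delta)=\log R+\alpha\log t+\log(1/\delta)$ from Equation~\eqref{eq:threshold}, the linear term $(1-\epsilon)t/T(\vmu)$ dominates $\alpha\log t$, and substituting $t=\frac{1+\epsilon}{1-\epsilon}T(\vmu)\log(1/\delta)$ shows the inequality holds for all small enough $\delta$, the error term $\alpha\log t\sim\alpha\log\log(1/\delta)$ being negligible against $\log(1/\delta)$. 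Dividing by $\log(1/\delta)$ and letting $\delta\to0$ then $\epsilon\to0$ yields $\limsup_{\delta\to0}\tau_\delta/\log(1/\delta)\leq T(\vmu)$, which is $\leq\alpha T(\vmu)$ since $\alpha\geq1$; finiteness of $\tau_\delta$ and $\delta$-correctness are guaranteed by Proposition~\ref{prop:delta_correct}.

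The main obstacle, and really the only delicate point, is the threshold-crossing asymptotics: one must argue that the sublinear contribution $\alpha\log t$ to $\beta(t,\delta)$ does not affect the leading order, uniformly as $\delta\to0$, which is precisely what Lemma~\ref{lem:logcard_in} is designed to deliver. A secondary care point is the bookkeeping of the almost-sure event: the per-arm strong laws, and hence $\tilde{\vmu}(t)\to\vmu$, $\vN(t)/t\to\vw(\vmu)$ and $Z(t)/t\to T(\vmu)^{-1}$, must be arranged to hold simultaneously on a single probability-one set, and one must verify that $\tilde{\vmu}(t)\in\cG^*$ eventually so that $\vw(\tilde{\vmu}(t))$ and the regularity theorem are legitimately applied.
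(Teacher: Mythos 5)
Your proof is correct, and its first half coincides with the paper's own argument: the paper reduces the theorem to exactly the proposition you establish --- $\hat{\vmu}(t)\to\vmu$ and $\vN(t)/t\to\vw(\vmu)$ almost surely, obtained via the deterministic exploration rate of Lemma~\ref{lem:LB_exploration_rate} (no conditioning on $\cE$, as you rightly stress), the strong law, the shrinking of $\cC\cR_{\vmu}(t)$ around $\hat{\vmu}(t)$ (so $\tilde{\vmu}(t)\to\vmu$), continuity of $\vw$ at $\vmu$, and the tracking bound of Lemma~\ref{lem:control_N} with a Ces\`aro argument --- and then simply invokes \citet[Proposition 13]{GK16} for the threshold-crossing step, declaring the adaptation straightforward. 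You instead carry out the crossing analysis explicitly, and your version is in fact sharper than what is cited: substituting $t=\frac{1+\epsilon}{1-\epsilon}T(\vmu)\log(1/\delta)$ into $(1-\epsilon)t/T(\vmu)>\log R+\alpha\log t+\log(1/\delta)$ and noting that $\alpha\log t=O(\log\log(1/\delta))$ is negligible yields $\limsup_{\delta\to0}\tau_\delta/\log(1/\delta)\leq T(\vmu)$, with no factor $\alpha$ and no use of the restriction $\alpha\leq e/2$. The $\alpha T(\vmu)$ in the statement (inherited from GK16) comes from resolving the crossing inequality $c_1 t\geq\log(c_2 t^{\alpha})$ \emph{non-asymptotically} through an explicit solution of Lemma~18 type, which carries the factor $\alpha$ in front of the leading term and is where the condition $\alpha\in[1,e/2]$ enters; your purely asymptotic substitution trades that explicitness for the better constant, and of course still implies the stated bound since $\alpha\geq1$.

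Two cosmetic points, neither a gap. Your appeal to Lemma~\ref{lem:logcard_in} is unnecessary in the asymptotic framing you chose: that lemma is the tool for the non-asymptotic resolution of the crossing inequality (it is used this way in Appendix~\ref{appendixD}), whereas your limit argument needs nothing beyond $\alpha\log t=o(t)$. And Proposition~\ref{prop:delta_correct} gives $\delta$-correctness, i.e.\ $\P_{\vmu}(\tau_\delta<+\infty,\,\hat{a}_{\tau_\delta}\neq a^*(\vmu))\leq\delta$, not almost-sure finiteness of $\tau_\delta$; but this costs you nothing, since your own argument shows that $Z(t)$ grows linearly on the probability-one event and hence crosses $\beta(t,\delta)$ in finite time.
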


The result was obtained by \citet[Proposition 13]{GK16}. The adaptation to \EBS is straightforward, as soon as we prove the following result. 

\begin{proposition} For any choice of parameters and $\vmu \in \cG^*$, the sampling rule of \EBS satisfies:
	\[ \lim_{t \to +\infty} \hat{\vmu}(t) = \vmu \quad \P_{\vmu}\text{-a.s.} \quad \quad \text{and} \quad \quad \lim_{t \to +\infty} \frac{N(t)}{t} = w(\vmu) \quad \P_{\vmu}\text{-a.s.} \;. \]
\end{proposition}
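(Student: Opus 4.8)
The plan is to prove the two convergences in cascade: first show that every arm is pulled infinitely often, deduce the convergence of the empirical means from the strong law of large numbers, and then propagate this through the confidence regions, the weight map, and finally the C-tracking bound to obtain the convergence of the sampling frequencies.

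First I would invoke Lemma~\ref{lem:LB_exploration_rate}, which holds \emph{deterministically} for any choice of parameters and guarantees $N_a(t) \geq \frac{2}{K}\sqrt{t} - K$ for every $a \in \Segent{K}$. In particular $N_a(t) \to +\infty$ for each arm. Writing $\hat{\mu}_a(t) = \hat{\mu}_{a, N_a(t)}$ as in Equation~\eqref{eq:local_estimate} and applying the strong law of large numbers to each i.i.d.\ sequence $(X_{a,n})_{n \geq 1}$, we get $\hat{\mu}_{a,n} \to \mu_a$ $\P_{\vmu}$-a.s., hence $\hat{\mu}_a(t) = \hat{\mu}_{a, N_a(t)} \to \mu_a$ $\P_{\vmu}$-a.s. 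This already establishes the first claim $\hat{\vmu}(t) \to \vmu$.

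Next I would show $\tilde{\vmu}(t) \to \vmu$ a.s. Because $N_a(t) \to +\infty$, the confidence half-widths $\ell_a(t) = C_{\gamma/K}(N_a(t))$ converge to $0$; together with $\hat{\vmu}(t) \to \vmu$ this means the confidence region $\cC\cR_{\vmu}(t) = \prod_{a} [\hat{\mu}_a(t) \pm \ell_a(t)]$ shrinks to the singleton $\{\vmu\}$. Since by construction (Proposition~\ref{prop:optimistic_weights}) $\tilde{\vmu}(t) \in \cC\cR_{\vmu}(t)$, we necessarily have $\tilde{\vmu}(t) \to \vmu$ a.s. Then for $t$ large enough the confidence interval of $a^*(\vmu)$ separates from the others (once $\ell_a(t)$ is small relative to $\Delta_{\min}$), so $\tilde{\vmu}(t)$ lies in $\cG^*$ with the same unique best arm as $\vmu$ and its squared gaps satisfy $(1-\varepsilon)\Delta_a^2 \leq \tilde{\Delta}_a(t)^2 \leq (1+\varepsilon)\Delta_a^2$ with $\varepsilon = \varepsilon(t) \to 0$. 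Theorem~\ref{thm:regularity} then yields $\tilde{\vw}(t) = \vw(\tilde{\vmu}(t)) \to \vw(\vmu)$ a.s.

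Finally I would conclude via the C-tracking bound of Lemma~\ref{lem:control_N}, which gives $\bigl| N_a(t) - \sum_{s=0}^{t-1} \tilde{w}_a(s) \bigr| \leq K-1$. Hence $N_a(t)/t$ has the same limit as $\frac{1}{t}\sum_{s=0}^{t-1} \tilde{w}_a(s)$, which is the Cesàro average of the convergent sequence $(\tilde{w}_a(s))_{s}$ and therefore tends to $w_a(\vmu)$; this proves $\vN(t)/t \to \vw(\vmu)$ a.s. The main obstacle is the passage from $\tilde{\vmu}(t) \to \vmu$ to $\tilde{\vw}(t) \to \vw(\vmu)$: one must be sure that $\tilde{\vmu}(t)$ eventually belongs to $\cG^*$ with best arm $a^*(\vmu)$ before the quantitative continuity of Theorem~\ref{thm:regularity} can be applied, which is exactly what the separation of the confidence intervals provides.
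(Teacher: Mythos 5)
Your proposal is correct and follows essentially the same path as the paper's own proof: Lemma~\ref{lem:LB_exploration_rate} gives $N_a(t) \to +\infty$, the strong law of large numbers gives $\hat{\vmu}(t) \to \vmu$, the bound $\abs{\tilde{\mu}_a(t) - \hat{\mu}_a(t)} \leq C_{\gamma/K}(N_a(t)) \to 0$ gives $\tilde{\vmu}(t) \to \vmu$, and the conclusion follows from the C-tracking bound of Lemma~\ref{lem:control_N} together with the Ces\`aro lemma. The only (harmless) local difference is at the step $\tilde{\vw}(t) \to \vw(\vmu)$: the paper simply invokes continuity of $\vw$ at $\vmu \in \cG^*$, whereas you derive it from the quantitative regularity of Theorem~\ref{thm:regularity} after checking that $\tilde{\vmu}(t)$ eventually has the same unique best arm as $\vmu$ --- a slightly more self-contained justification of the same step.
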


\begin{proof} Lemma~\ref{lem:LB_exploration_rate} implies that $N_a(t) \to_{t \to +\infty} +\infty$ for all $a \in \Segent{K}$, so that the law of large number gives
	\[ \lim_{t \to +\infty} \hat{\vmu}(t) = \vmu \quad \P_{\vmu}\text{-a.s.} \;.\]
	Remark that for $a \in \Segent{K}$ one has
	\[ \abs{\tilde{\mu}_a(t) - \hat{\mu}_a(t)} \leq C_{\gamma/K}(N_a(t)) = 2\sqrt{\frac{\log(4N_a(t)K/\gamma)}{N_a(t)}} \quad \longrightarrow_{t \to +\infty} \quad 0 \]
	so that we also have
	\[ \lim_{t \to +\infty} \tilde{\vmu}(t) = \vmu \quad \P_{\vmu}\text{-a.s.} \]
	and thus by continuity of function $\vw$ in $\vmu$ (as $\vmu$ has a unique optimal arm):
	\[ \lim_{t \to +\infty} \tilde{\vw}(t) = \vw(\vmu) \quad \P_{\vmu}\text{-a.s.} \;. \]
	Now for all $t \in \N^*$ and $a \in \Segent{K}$ we have:
	\begin{align*}
		\abs{\frac{N_a(t)}{t} - w_a(\vmu)} & \leq \frac{1}{t} \abs{N_a(t) - \sum_{s=0}^{t-1} \tilde{w}_a(s)} + \abs{\frac{1}{t} \sum_{s=0}^{t-1} (\tilde{w}_a(s) - w_a(\vmu))}                                          \\
		                                     & \leq \frac{K-1}{t} + \abs{\frac{1}{t} \sum_{s=0}^{t-1} (\tilde{w}_a(s) - w_a(\vmu))}                                              &  & \text{by Lemma~\ref{lem:control_N}} \\
		                                     & \to_{t \to +\infty} 0
	\end{align*}
	(using the Cesaro Lemma for the second term).
	\end{proof}

\subsection{Proof of Theorem~\ref{thm:asymptotic_optimality_E}}	\label{appendixF3}

Once again this is a direct adaptation of \citet[Theorem 14]{GK16}. Indeed, we can follow the proof as long as the two lemmas shown in this section are satisfied.

Let us recall the notations of \citet{GK16}. We assume that $1$ is the best arm of $\vmu$. Fix $\varepsilon > 0$. By continuity of $\vw$ in $\vmu$, let $\xi \leq \Delta_{\min}(\vmu)/4$ be such that
\[ \max_{\vmu' \in \cI_\varepsilon} \norm{\vw(\vmu')-\vw(\vmu)}_{\infty} \leq \varepsilon \quad \text{where} \quad \cI_\varepsilon = \prod_{a \in \Segent{K}} [\mu_a \pm \xi] \;. \]
Let $T \in \N$ and define $h(T) = T^{1/4}$ and the event
\[ \cE_T = \bigcap_{t=h(T)}^T (\hat{\vmu}(t) \in \cI_\varepsilon) \;. \]

\begin{lemma} There exist two positive constants $B, C$ (that depend on $\vmu$ and $\varepsilon$) such that
	\[ \P_{\vmu}(\cE_T^c) \leq BT \exp(-CT^{1/8}) \;. \]
\end{lemma}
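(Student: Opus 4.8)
The plan is to bound $\P_{\vmu}(\cE_T^c)$ by a union bound, reducing everything to deviations of empirical means at fixed sample sizes, which are controlled by Hoeffding's inequality. By definition $\cE_T^c = \bigcup_{t=h(T)}^T \{\hat{\vmu}(t) \notin \cI_\varepsilon\}$, and since $\cI_\varepsilon = \prod_{a \in \Segent{K}} [\mu_a \pm \xi]$, leaving $\cI_\varepsilon$ forces at least one coordinate out, so $\{\hat{\vmu}(t) \notin \cI_\varepsilon\} \subseteq \bigcup_{a \in \Segent{K}} \{|\hat{\mu}_a(t) - \mu_a| > \xi\}$. This gives
\[ \P_{\vmu}(\cE_T^c) \leq \sum_{t=h(T)}^T \sum_{a \in \Segent{K}} \P_{\vmu}\bigl(|\hat{\mu}_a(t) - \mu_a| > \xi\bigr) \;. \]

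The crucial ingredient is the \emph{deterministic} exploration rate of Lemma~\ref{lem:LB_exploration_rate}: for every sample path and every $\vmu$, $N_a(t) \geq m(t) := \frac{2}{K}\sqrt{t} - K$. I emphasize that this lower bound holds unconditionally (it does not require event $\cE$), which is exactly what is needed for an almost-sure asymptotic statement. For $T$ large enough that $h(T) = T^{1/4}$ exceeds an absolute ($K$-dependent) threshold, one has $m(t) \geq \frac{1}{K}\sqrt{t}$ for all $t \geq h(T)$, hence $m(t) \geq \frac{1}{K}\sqrt{h(T)} = \frac{1}{K}T^{1/8}$.

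I would then treat the random sample count as follows. By Equation~\eqref{eq:local_estimate}, $\hat{\mu}_a(t) = \hat{\mu}_{a, N_a(t)}$, and since $N_a(t) \geq m(t)$ deterministically, $\{|\hat{\mu}_a(t) - \mu_a| > \xi\} \subseteq \{\exists n \geq m(t) : |\hat{\mu}_{a,n} - \mu_a| > \xi\}$. A union bound over $n$ together with Hoeffding's inequality (as $\hat{\mu}_{a,n} - \mu_a$ is $\frac{1}{n}$-sub-Gaussian) yields
\[ \P_{\vmu}\bigl(|\hat{\mu}_a(t) - \mu_a| > \xi\bigr) \leq \sum_{n \geq m(t)} 2 e^{-n\xi^2/2} = \frac{2}{1 - e^{-\xi^2/2}}\, e^{-m(t)\xi^2/2} \leq C' e^{-\frac{\xi^2}{2K} T^{1/8}} \;, \]
with $C' = \frac{2}{1 - e^{-\xi^2/2}}$ depending only on $\xi$ (hence on $\vmu, \varepsilon$). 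Substituting back and bounding the number of indices $t \in \segent{h(T)}{T}$ by $T$ gives $\P_{\vmu}(\cE_T^c) \leq BT\, e^{-CT^{1/8}}$ with $B = KC'$ and $C = \xi^2/(2K)$; the finitely many small values of $T$ below the threshold are handled trivially by enlarging $B$, since probabilities never exceed $1$.

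The only real subtlety, and hence the main obstacle, is the treatment of the random number of pulls $N_a(t)$, which would otherwise entangle the sample count with the empirical mean. This is resolved precisely by the unconditional bound of Lemma~\ref{lem:LB_exploration_rate}, which lets me replace $N_a(t)$ by the deterministic threshold $m(t)$ \emph{before} invoking concentration, avoiding any conditioning on a favorable event. The exponent $T^{1/8}$ is then forced by the interplay between the cutoff $h(T) = T^{1/4}$ and the $\sqrt{t}$ exploration rate, since $\sqrt{h(T)} = T^{1/8}$.
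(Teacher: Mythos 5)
Your proof is correct and takes essentially the same route as the paper's: a union bound over $t \in \segent{h(T)}{T}$ and over arms, removal of the randomness of $N_a(t)$ via the unconditional exploration bound of Lemma~\ref{lem:LB_exploration_rate} combined with Equation~\eqref{eq:local_estimate}, a union bound over sample counts summed as a geometric series after Hoeffding's inequality, and finally $\sqrt{t} \geq \sqrt{h(T)} = T^{1/8}$. The only difference is cosmetic bookkeeping of constants: the paper absorbs the $-K$ in the exploration bound into $B$ via a factor $\exp(K\xi^2/2)$, obtaining $C = \xi^2/K$ uniformly in $T$, whereas you pass to $m(t) \geq \frac{1}{K}\sqrt{t}$ above a $K$-dependent threshold and enlarge $B$ to cover small $T$, obtaining $C = \xi^2/(2K)$.
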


\begin{proof} We have by union bound
	\[ \P_{\vmu}(\cE_T^c) \leq \sum_{t=h(T)}^T \sum_{a \in \Segent{K}} \P_{\vmu}(|\hat{\mu}_a(t) - \mu_a| > \xi) \;. \]
	Then
	\begin{align*}
		\P_{\vmu}(|\hat{\mu}_a(t) - \mu_a| > \xi) & = \sum_{s=\frac{2}{K}\sqrt{t}-K}^t \P_{\vmu}(|\hat{\mu}_a(t) - \mu_a| > \xi ~\cap~ N_a(t) = s) &  & \text{by Lemma~\ref{lem:LB_exploration_rate}} \\
		                                          & \leq \sum_{s=\frac{2}{K}\sqrt{t}-K}^t \P(|\hat{\mu}_{a, s} - \mu_a|> \xi)                      &  & \text{by Equation~\eqref{eq:local_estimate}}  \\
		                                          & \leq 2 \sum_{s=\frac{2}{K}\sqrt{t}-K}^t \exp\Big(-s\frac{\xi^2}{2}\Big)                        &  & \text{by Hoeffding's inequality}                     \\ 
		                                          & \leq 2\frac{\exp(-(\frac{2}{K}\sqrt{t}-K)\xi^2/2)}{1-\exp(-\xi^2/2)} \;.
	\end{align*}
	With
	\[ B = 2 K \frac{\exp(K\xi^2/2)}{1-\exp(-\xi^2/2)} \quad \text{and} \quad C = \frac{\xi^2}{K} \;, \]
	one has
	\[ \P_{\vmu}(\cE_T^c) \leq \sum_{t=h(T)}^T B \exp(-\sqrt{t}C) \leq BT \exp(-\sqrt{h(T)} C) \leq BT \exp(-CT^{1/8}) \;. \]
	\end{proof}

\begin{lemma} There exists a constant $T_{\varepsilon}$ such that for $T \geq T_{\varepsilon}$, il holds that on $\cE_T$
	\[ \forall t \geq \sqrt{T}, \quad \max_{a \in \Segent{K}} \Big| \frac{N_a(t)}{t} - w_a(\vmu) \Big| \leq 3\varepsilon \;. \]
\end{lemma}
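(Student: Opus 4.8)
The plan is to convert the probabilistic control of the empirical means on $\cE_T$ into a deterministic tracking estimate, exactly as in the C-tracking analysis of \citet{GK16}, but with one extra step that is specific to \EBS: passing from the exploration-biased weights $\tilde{\vw}(s)=\vw(\tilde\vmu(s))$ back to the plug-in weights $\vw(\hat\vmu(s))$. Throughout I fix $t$ with $\sqrt T \le t \le T$ (the regime in which $\cE_T$ controls every mean $\hat\vmu(s)$ that enters the sampling rule up to time $t$), and I argue on the event $\cE_T$. First I would invoke the C-tracking guarantee of Lemma~\ref{lem:control_N}, $|N_a(t)-\sum_{s=0}^{t-1}\tilde{w}_a(s)|\le K-1$, and split the cumulative sum at $h(T)=T^{1/4}$:
\[
\sum_{s=0}^{t-1}\tilde{w}_a(s) = \sum_{s=0}^{h(T)-1}\tilde{w}_a(s) + \sum_{s=h(T)}^{t-1}\tilde{w}_a(s).
\]
The first block is at most $h(T)$ since each $\tilde{w}_a(s)\le 1$, so after division by $t\ge\sqrt T$ it contributes at most $h(T)/t\le T^{-1/4}$, which is $\le\varepsilon$ for $T$ large; the term $(K-1)/t\le (K-1)/\sqrt T$ is handled the same way.

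The heart of the argument is to show that, on $\cE_T$, for every $s\in[h(T),t-1]$ and $T$ large one has $|\tilde{w}_a(s)-w_a(\vmu)|\le 2\varepsilon$. I would control this in two steps. On $\cE_T$ we have $\norm{\hat\vmu(s)-\vmu}_\infty\le\xi\le\Delta_{\min}(\vmu)/4$, so $\hat\vmu(s)$ has arm $1$ as its unique best arm with gap bounded below by $\Delta_{\min}(\vmu)/2$, and the choice of $\xi$ gives $\norm{\vw(\hat\vmu(s))-\vw(\vmu)}_\infty\le\varepsilon$. Next, Lemma~\ref{lem:LB_exploration_rate} yields $N_a(s)\ge \tfrac{2}{K}\sqrt s - K \ge \tfrac{2}{K}T^{1/8}-K$ for $s\ge h(T)$, hence the confidence half-widths satisfy $\max_a C_{\gamma/K}(N_a(s))\le C_{\gamma/K}(\tfrac{2}{K}T^{1/8}-K)\to 0$ as $T\to\infty$, uniformly over $s\ge h(T)$. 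Therefore $\norm{\tilde\vmu(s)-\hat\vmu(s)}_\infty\to 0$; since the gaps of $\hat\vmu(s)$ are bounded away from zero, the relative perturbation of the squared gaps can be made smaller than $1/7$, so Theorem~\ref{thm:regularity} applies and gives $\norm{\vw(\tilde\vmu(s))-\vw(\hat\vmu(s))}_\infty\le\varepsilon$ for $T$ large. Combining the two steps yields the claimed $|\tilde{w}_a(s)-w_a(\vmu)|\le 2\varepsilon$.

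Finally I would assemble the estimate. Writing $e_w\le 2\varepsilon$ for the uniform weight error above and using $\sum_{s=h(T)}^{t-1}\tilde{w}_a(s)\le(t-h(T))(w_a(\vmu)+e_w)$, the upper bound reads
\[
\frac{N_a(t)}{t} - w_a(\vmu) \le \frac{K-1}{t} + \frac{h(T)}{t} + \frac{t-h(T)}{t}\,e_w \le \frac{K-1}{t}+\frac{h(T)}{t} + e_w,
\]
and a symmetric computation gives the matching lower bound $-e_w-\tfrac{h(T)}{t}-\tfrac{K-1}{t}$. Choosing $T_\varepsilon$ so large that $\tfrac{K-1}{\sqrt T}+T^{-1/4}\le\varepsilon$ and that all the ``for $T$ large'' requirements above hold, we obtain $|N_a(t)/t-w_a(\vmu)|\le e_w+\varepsilon\le 3\varepsilon$ for every $a$ and every $t\in[\sqrt T,T]$, which is the claim.

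The main obstacle is precisely the middle step. Unlike \TaS, the tracked weights are those of the exploration-biased bandit $\tilde\vmu(s)$ rather than of $\hat\vmu(s)$, so one cannot directly invoke the continuity choice of $\xi$; instead one must quantitatively transfer the smallness of the confidence region (guaranteed by the $\sqrt s$ exploration lower bound of Lemma~\ref{lem:LB_exploration_rate}) into smallness of $\norm{\vw(\tilde\vmu(s))-\vw(\hat\vmu(s))}_\infty$, which is exactly what the quantitative regularity result Theorem~\ref{thm:regularity} is designed to supply. The remaining estimates are the standard Cesàro/boundary bookkeeping of C-tracking.
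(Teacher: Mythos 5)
Your proof is correct and follows the same skeleton as the paper's: the C-tracking bound of Lemma~\ref{lem:control_N}, the split of the cumulative sum at $h(T)=T^{1/4}$ with the boundary terms bounded by $K/T^{1/4}$, and a uniform bound on $\abs{\tilde{w}_a(s)-w_a(\vmu)}$ for $s\geq h(T)$, yielding $3\varepsilon$ with the same kind of threshold $T_\varepsilon$. The one genuine difference is the middle step, and here you are actually \emph{more} complete than the paper: the paper bounds $\abs{\frac{1}{t}\sum_{s=h(T)}^{t-1}(\tilde{w}_a(s)-w_a(\vmu))}\leq\varepsilon$ with the bare justification ``by definition of $\cE_T$'', which silently identifies the tracked weights $\tilde{\vw}(s)=\vw(\tilde{\vmu}(s))$ with the plug-in weights $\vw(\hat{\vmu}(s))$ — but $\cE_T$ only places $\hat{\vmu}(s)$, not the exploration-biased bandit $\tilde{\vmu}(s)$, in $\cI_\varepsilon$. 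You correctly isolate this gap and close it via Lemma~\ref{lem:LB_exploration_rate} (so the confidence half-widths vanish uniformly over $s\geq h(T)$ as $T\to\infty$) together with the quantitative regularity of Theorem~\ref{thm:regularity}, exactly the mechanism the paper itself uses in Lemma~\ref{lem:def_T0} and, via plain continuity, in the almost-sure proposition of Appendix~F.2. A marginally lighter repair, closer in spirit to the paper, would be to choose $\xi$ by continuity on the ball of radius $2\xi$ and absorb the shrinking widths into the extra $\xi$, avoiding Theorem~\ref{thm:regularity} altogether; your route costs $2\varepsilon$ instead of $\varepsilon$ for the weight error but this is compensated in the boundary terms, so the final $3\varepsilon$ is unchanged. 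Your explicit restriction to $t\in[\sqrt{T},T]$ is also appropriate: the paper's statement reads ``$\forall t\geq\sqrt{T}$'' but its proof (and the downstream use in the proof of Theorem~\ref{thm:asymptotic_optimality_E}) only controls $\tilde{w}_a(s)$ for $s\leq T$, so both arguments share this implicit cap.
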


\begin{proof} For any $t \geq \sqrt{T} = h(T)^2$ and $a \in \Segent{K}$ we have:
	\begin{align*}
		\abs{\frac{N_a(t)}{t} - w_a(\vmu)} & \leq \frac{1}{t} \abs{N_a(t) - \sum_{s=0}^{t-1} \tilde{w}_a(s)} + \abs{\frac{1}{t} \sum_{s=0}^{t-1} (\tilde{w}_a(s) - w_a(\vmu))}                                          \\
		                                     & \leq \frac{K-1}{t} + \frac{h(T)}{t} + \abs{\frac{1}{t} \sum_{s=h(T)}^{t-1} (\tilde{w}_a(s) - w_a(\vmu))}                          &  & \text{by Lemma~\ref{lem:control_N}} \\
		                                     & \leq \frac{K-1}{T^{1/2}} + \frac{1}{T^{1/4}} + \varepsilon                                                                          &  & \text{by definition of } \cE_T      \\
		                                     & \leq \frac{K}{T^{1/4}} + \varepsilon \leq 3\varepsilon
	\end{align*}
	whenever $T \geq (K/2\varepsilon)^4 = T_{\varepsilon}$.
	\end{proof}

%%% APPENDIX G %%%

\section{Additional experiments}	\label{appendixG}

In this section we present numerical experiments to compare the dependence on parameter $\delta$ of three strategies, namely \EBS, \TaS and \US (that samples arms uniformly).

On Figure \ref{fig:add_experiment}, we plot for each strategy and several bandit parameters the estimate of $\E_{\vmu}[\tau_\delta]$ for different values of $\delta$ (using the same threshold $\beta$ as in the experiments of Section \ref{sec:experiments} and $\gamma = 0.1$ for \EBS). We also plot in black the lower bound of \cite{GK16} ($\sim_{\delta \to 0} T(\vmu) \log(1/\delta)$).

In term of performance, we observe that \EBS is always between \US and \TaS (which is always quite close to the lower bound). More precisely there are different behaviours: 
\begin{itemize}
	\item when the problem is difficult (with small gaps), \EBS behaves almost like \TaS. Indeed for those parameters the uniform sampling phase of \EBS is relatively small comparing to the required number of samples so that \EBS has time to shrink its confidence regions close to parameter $\vmu$ and thus behaves like \TaS (see bandit $\vmu^{(1)}$), 

	\item when the problem is easier (with large gaps), \EBS behaves like \US, as in almost all simulations the strategy does not have enough confidence to leave the uniform sampling phase before the stopping condition is satisfied (see bandits $\vmu^{(2)}$ and $\vmu^{(3)}$). When $\delta$ decreases, there is a separation between \EBS and \US as more and more simulations reach the non-uniform sampling phase of our strategy. If we continue to check for smaller values of $\delta$, one can expect that \EBS will come closer to \TaS than \US, for the same reasons as before: the confidence regions of \EBS have more time to shrink. This is what we observe we bandit $\vmu^{(4)}$, for which \EBS has the behaviour of \US for moderate values of $\delta$ and then the behaviour of \TaS for small values of $\delta$. 
\end{itemize}

\begin{figure}[h]
  \centering
  \includegraphics[width=0.45\textwidth]{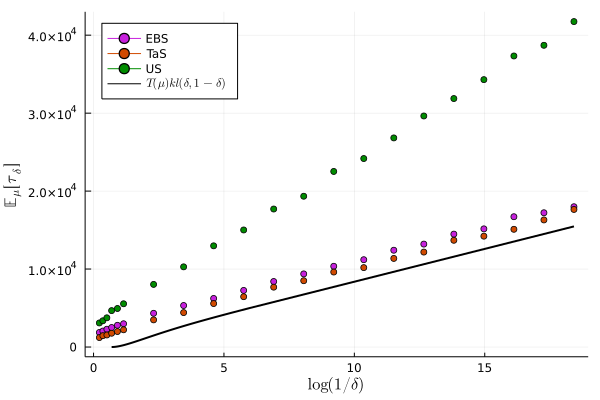} \quad   \includegraphics[width=0.45\textwidth]{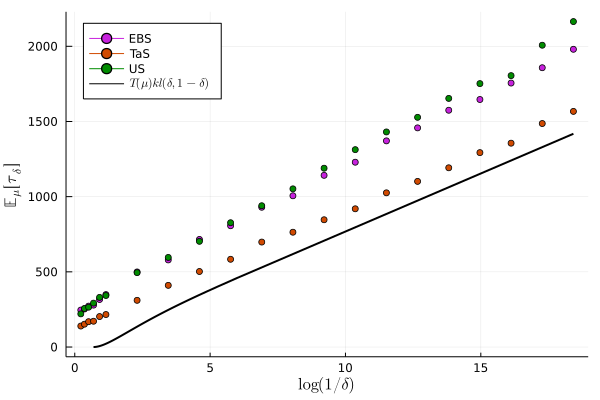} \\
  \includegraphics[width=0.45\textwidth]{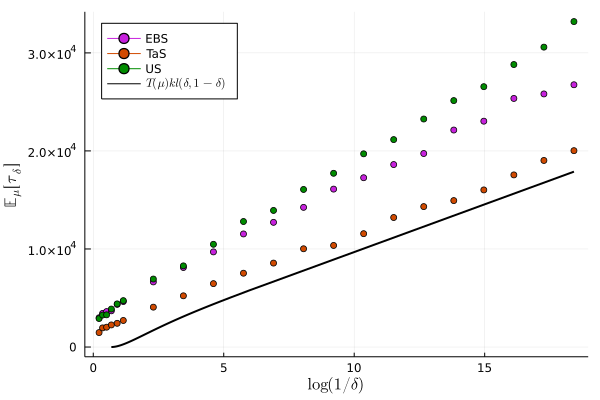}  \quad \includegraphics[width=0.45\textwidth]{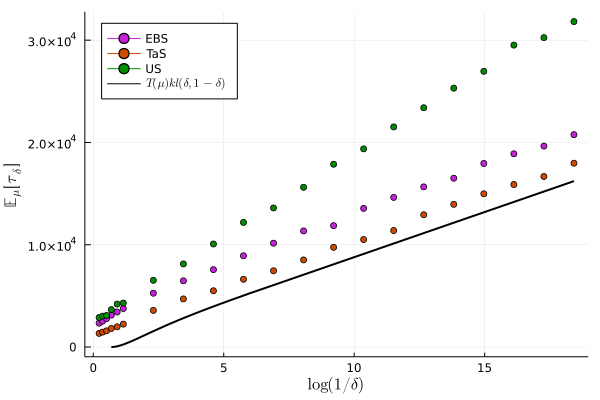} 
  \caption{Empirical Expected Number of Draws $\E_{\vmu}[\tau_\delta]$, Averaged over $500$ Experiments. Top left: $\vmu^{(1)} = (0.9, 0.8, 0.6, 0.4, 0.4)$. Top Right: $\vmu^{(2)} = (0.9, 0.5, 0.45, 0.4)$. Bottom Left: $\vmu^{(3)} = (0.9, 0.8, 0.75, 0.7)$. Bottom Right: $\vmu^{(4)} = (0.9, 0.8, 0.7, 0.6)$}
   \label{fig:add_experiment}
\end{figure}

\end{document}